\documentclass[11pt, oneside]{amsart}
 \usepackage[text={5.58in,8.5in},centering,letterpaper,dvips]{geometry}
\usepackage{graphicx}
\usepackage{amsfonts}
\usepackage[dvipsnames]{xcolor}
\usepackage{subcaption}
\usepackage{epsf}
\usepackage{amssymb}
\usepackage{amsmath}
\usepackage{amscd}
\usepackage{tikz-cd}
\usepackage{adjustbox}
\usepackage{pdfpages}
\usepackage{fancyhdr}
\usepackage{setspace}
\usepackage{soul} 
\usepackage[all]{xy}
\usepackage{verbatim}
\usepackage{enumerate}
\usepackage[colorlinks=true, urlcolor=NavyBlue, linkcolor=NavyBlue, citecolor=NavyBlue,backref=page]{hyperref}
\usepackage{mdframed}
\usepackage{wrapfig}

\renewcommand*{\backref}[1]{}
\renewcommand*{\backrefalt}[4]
{%
    \ifcase #1 (Not cited.)%
        \or        (Cited on page~#2.)
        \else      (Cited on pages~#2.)
    \fi
}


\makeatletter
\setcounter{tocdepth}{3}

\renewcommand{\tocsection}[3]{%
  \indentlabel{\@ifnotempty{#2}{\bfseries\ignorespaces#1 #2\quad}}\bfseries#3}
\renewcommand{\tocsubsection}[3]{%
  \indentlabel{\@ifnotempty{#2}{\ignorespaces#1 #2\quad}}#3}

\newcommand\@dotsep{4.5}
\def\@tocline#1#2#3#4#5#6#7{\relax
  \ifnum #1>\c@tocdepth 
  \else
    \par \addpenalty\@secpenalty\addvspace{#2}%
    \begingroup \hyphenpenalty\@M
    \@ifempty{#4}{%
      \@tempdima\csname r@tocindent\number#1\endcsname\relax
    }{%
      \@tempdima#4\relax
    }%
    \parindent\z@ \leftskip#3\relax \advance\leftskip\@tempdima\relax
    \rightskip\@pnumwidth plus1em \parfillskip-\@pnumwidth
    #5\leavevmode\hskip-\@tempdima{#6}\nobreak
    \leaders\hbox{$\m@th\mkern \@dotsep mu\hbox{.}\mkern \@dotsep mu$}\hfill
    \nobreak
    \hbox to\@pnumwidth{\@tocpagenum{\ifnum#1=1\bfseries\fi#7}}\par
    \nobreak
    \endgroup
  \fi}
\AtBeginDocument{%
\expandafter\renewcommand\csname r@tocindent0\endcsname{0pt}
}
\def\l@subsection{\@tocline{2}{1pt 
}{5pc 
}{}{}}
\makeatother

\theoremstyle{theorem}
\newtheorem{theorem}{Theorem}[section]
\newtheorem{proposition}[theorem]{Proposition}
\newtheorem{lemma}[theorem]{Lemma}
\newtheorem{question}[theorem]{Question}
\newtheorem{corollary}[theorem]{Corollary}
\newtheorem{conjecture}[theorem]{Conjecture}

\newtheorem*{namedthm}{\namedthmname}
\newcounter{namedthm}

\makeatletter
\newenvironment{named}[1]
  {\def\namedthmname{#1}%
   \refstepcounter{namedthm}%
   \namedthm\def\@currentlabel{#1}}
  {\endnamedthm}
\makeatother



\makeatletter
\newcommand{\newreptheorem}[2]{%
\newtheorem*{rep@#1}{\rep@title}
\newenvironment{rep#1}[1]{%
\def\rep@title{#2 \ref{##1}}
\begin{rep@#1}}{\end{rep@#1}}}
\makeatother

\newreptheorem{theorem}{Theorem}
\newreptheorem{lemma}{Lemma}
\newreptheorem{question}{Question}
\newreptheorem{corollary}{Corollary}
\newreptheorem{proposition}{Proposition}
\newreptheorem{conjecture}{Conjecture}


\theoremstyle{definition}
\newtheorem{definition}[theorem]{Definition}
\newtheorem{remark}[theorem]{Remark}

\makeatletter
\newcommand{\newreptheoremD}[2]{%
\newtheorem*{rep@#1}{\rep@title}
\newenvironment{rep#1}[1]{%
\def\rep@title{#2 \ref{##1}}
\begin{rep@#1}}{\end{rep@#1}}}
\makeatother

\newreptheoremD{definition}{Defintion}

\newcommand{\Z}{\mathbb{Z}}

\newcommand{\R}{\mathbb{R}}

\newcommand{\Id}{\text{Id}}
\newcommand{\Int}{\text{Int}}

\newcommand{\Bb}{\mathcal B}
\newcommand{\Cc}{\mathcal C}
\newcommand{\Dd}{\mathcal D}

\newcommand{\Ff}{\mathcal F}

\newcommand{\Pp}{\mathcal P}

\newcommand{\Ss}{\mathcal S}

\newcommand{\Diff}{\text{Diff}}
\newcommand{\Sing}{\text{Sing}}
\newcommand{\Fix}{\text{Fix}}
\newcommand{\Span}{\text{Span}}
\newcommand{\Stab}{\text{Stab}}

\newcommand{\Isom}{\text{Isom}}


\makeatletter
\def\@seccntformat#1{%
  \protect\textup{\protect\@secnumfont
    \ifnum\pdfstrcmp{subsection}{#1}=0 \bfseries\fi
    \csname the#1\endcsname
    \protect\@secnumpunct
  }%
}  
\makeatother

\topmargin = -.25in 
\textwidth = 6in
\textheight = 8.75in
\oddsidemargin = .25in
\evensidemargin = 0in

\begin{document}

\rhead{\thepage}
\lhead{\author}
\thispagestyle{empty}


\raggedbottom
\pagenumbering{arabic}
\setcounter{section}{0}


\title{An equivariant Laudenbach-Po\'enaru theorem}

\author{Jeffrey Meier}
\address{Department of Mathematics, Western Washington University 
Bellingham, WA 98229}
\email{jeffrey.meier@wwu.edu}
\urladdr{http://jeffreymeier.org} 

\author{Evan Scott}
\address{Department of Mathematics, CUNY Graduate Center
New York, NY 10016}
\email{escott@gradcenter.cuny.edu}
\urladdr{https://sites.google.com/view/evanscott}

\begin{abstract}
	A foundational theorem of Laudenbach and Po\'enaru states that any diffeomorphism of $\#^n(S^1\times S^2)$ extends to a diffeomorphism of $\natural^n(S^1\times B^3)$.
	We prove a generalization of this theorem that accounts for the presence of a finite group action on $\#^n(S^1\times S^2)$.
	Our proof is independent of the classical theorem, so by considering the trivial group action, we give a new proof of the classical theorem.
	
	Specifically, we show that any finite group action on $\#^n(S^1\times S^2)$ extends to a \emph{linearly parted} action on $\natural^n(S^1\times B^3)$ and that any two such extensions are equivariantly diffeomorphic.
	Roughly, a linearly parted action respects a decomposition into equivariant $0$--handles and $1$--handles, where, for each handle in the decomposition, its stabilizer acts linearly on that handle.
	The restriction to linearly parted actions is important, because there are infinitely many distinct nonlinear actions on $B^4$ with identical actions on $\partial B^4$; these nonlinear actions give extensions of the same action on $\partial B^4$ which are \emph{not} equivariantly diffeomorphic.
		
	We also prove a more general theorem: Every finite group action on $\left(\#^n(S^1\times S^2),L\right)$, with $L$ an invariant unlink, extends across a pair $\left(\natural^n(S^1\times B^3),\Dd\right)$, with $\Dd$ an equivariantly boundary-parallel disk-tangle, and any two such extensions are equivariantly diffeomorphic.
\end{abstract}

\maketitle


\section{Introduction}
\label{sec:introduction}

A foundational theorem of Laudenbach and Po\'enaru states that any diffeomorphism of $\#^n(S^1\times S^2)$ extends to a diffeomorphism of $\natural^n(S^1\times B^3)$~\cite{LauPoe_72_A-note-on-4-dimensional-handlebodies}.
Modern diagrammatic approaches to four-manifold topology, such as Kirby diagrams~\cite[Chapter~4]{GomSti_99_4-manifolds-and-Kirby} and trisection diagrams~\cite{GayKir_16_Trisecting-4-manifolds}, rely on this theorem.
In particular, this theorem implies that in a handle-decomposition of a closed $4$--manifold, the $3$--handles and $4$--handles are uniquely determined by the $0$--handles, $1$--handles, and $2$--handles.

In this paper, we prove an analog of this theorem in the presence of a finite group action.
In order to do so, we first need to identify the proper equivariant analog of a 4--dimensional 1--handlebody.
Motivated by the~\ref{LT} for actions in dimension three, we consider a class of finite group actions on 4--dimensional 1--handlebodies that we refer to as \emph{linearly parted.}
Roughly, these actions decompose equivariantly into $0$--handles and $1$--handles such that the  induced action of the stabilizer of each handle is linear on that handle; see Definition~\ref{def:linearly_parted} for precise details.
We prove that a version of the Laudenbach-Po\'enaru theorem holds for these actions.

\begin{reptheorem}{thm:laud_poen}
	Let $G$ be a finite group acting on $Y=\#^k(S^1\times S^2)$.
	\begin{enumerate}
		\item There exists a 4--dimensional 1--handlebody $X$ with $\partial X = Y$ such that the action of $G$ on $Y$ extends to a linearly parted action on $X$.
		\item If $X$ and $X'$ are two 4--dimensional 1--handlebodies with $\partial X = Y = \partial X'$ and both $X$ and $X'$ have linearly parted $G$--actions extending the $G$--action on $Y$, then $X$ and $X'$ are $G$--diffeomorphic rel-boundary.
	\end{enumerate}
\end{reptheorem}

This theorem statement is equivalent to the usual statement about extending diffeomorphisms; see Proposition~\ref{prop:equivalent_statements}.
Our proof of Theorem~\ref{thm:laud_poen} is independent of the classical Laudenbach-Po\'enaru theorem, so we obtain a new proof of the classical Laudenbach-Po\'enaru theorem by setting $G=1$.
Another independent new proof of the classical theorem, using completely different techniques, was recently given by Moussard~\cite{Mou_25_On-diffeomorphisms-of-4-dimensional-1-handlebodies}.

Our approach utilizes a combination of standard 3--dimensional techniques (adapted to the equivariant setting), together with recent equivariant versions of the theorems of Munkres and Cerf regarding the connectedness of $\Diff^+(S^n)$ for $n=2,3$~\cite{Mun_60_Differentiable-isotopies-on-the-2-sphere, Cer_68_Sur-les-diffeomorphismes-de-la-sphere-de-dimension} given by Dinkelbach and Leeb~\cite[Corollary~2.6]{DinLee_09_Equivariant_Ricci_flow} and Mecchia and Seppi~\cite{MecSep_19_Isometry-groups-and-mapping}, respectively; see also~\cite[Corollary~1.5]{ChoLi_24_Equivariant-3-manifolds-with}.

Our restriction to considering linearly parted actions is critical:
The negative resolution of the Smith Conjecture in dimension four~\cite{Gif_66_The-generalized-Smith-conjecture, Gor_74_On-the-higher-dimensional-Smith} shows that there are many actions on $B^4$ whose restrictions to $\partial B^4$ are identical to a linear action, but which differ pairwise on the interior.
Such actions cannot satisfy Theorem~\ref{thm:laud_poen}(2), since there is no equivariant diffeomorphism (rel-boundary or otherwise) between such an action and a linear one.
Restricting to linearly parted actions eliminates these issues.

We have a number of motivations for proving Theorem~\ref{thm:laud_poen}.
First, we view this theorem as a first step towards understanding a number of open questions about group actions in dimension four; see Section~\ref{sec:open_questions} for a discussion.
Second, just as the classical Laudenbach-Po\'enaru theorem facilitates the study 4--manifolds using handle-decompositions and trisections, Theorem~\ref{thm:laud_poen} enables the development of equivariant versions of these techniques.

Following these lines, in a forthcoming paper~\cite{MeiSco_tri}, we develop a theory of equivariant trisections that can be used to study finite group actions on four-manifolds.
In that upcoming work, we also show that equivariant Kirby diagrams for $4$--manifolds always exist~\cite[Section~4.3]{MeiSco_tri}, but we leave full development of an equivariant Kirby calculus to further work.

A central aspect of the theory of trisections is its adaptation to the setting of knotted surfaces via bridge trisections~\cite{MeiZup_17_Bridge-trisections,MeiZup_18_Bridge-trisections}.
The theory of bridge trisections relies on a result~\cite[Lemma~18]{MeiZup_18_Bridge-trisections} that generalizes the classical Laudenbach-Po\'enaru theorem to pairs $(X,\Dd)$, where $X\cong \natural^n(S^1\times B^3)$ and $\Dd\subset X$ is a boundary-parallel disk-tangle.
With an eye toward the development of an equivariant version of the theory of bridge trisections in~\cite{MeiSco_tri}, we prove the following equivariant version of the Laudenbach-Po\'enaru theorem for pairs $(X,\Dd)$.

\begin{reptheorem}{thm:equiv_patch}
	Let $G$ be a finite group acting on $Y=\#^k(S^1\times S^2)$, and let $L\subset Y$ be a $G$--invariant unlink.
	\begin{enumerate}
		\item There exists a $G$--equivariant filling $(X,\Dd)$ of $(Y,L)$.
		\item If $(X,\Dd)$ and $(X',\Dd')$ are two $G$--equivariant fillings of $(Y,L)$, then $(X,\Dd)$ and $(X',\Dd')$ are $G$--diffeomorphic rel-boundary.
	\end{enumerate}
\end{reptheorem}

As in Theorem~\ref{thm:laud_poen}, this theorem relies on identifying the proper equivariant analog of the pair $(X,\Dd)$.
Roughly, the action on $X$ should be linearly parted, as before, and $\Dd$ should be equivariantly boundary-parallel in a way that respects the linear parting; see Definition~\ref{def:linearly_parted_pair} for precise details.

\subsection*{Organization}

In Section~\ref{sec:background}, we briefly recall some background material on finite group actions.
In Section~\ref{sec:equivariant_handlebodies}, we discuss the equivariant 1--handlebodies that play a central role throughout the paper.
We also discuss some important results from 3--dimensional equivariant topology.
In Section~\ref{sec:ELP}, we prove Theorem~\ref{thm:laud_poen}.
In Section~\ref{sec:ELP_surfaces}, we discuss how to adapt the concepts and techniques from Sections~\ref{sec:equivariant_handlebodies} and~\ref{sec:ELP} to the setting of actions on pairs $(X,\Dd)$ and we prove Theorem~\ref{thm:equiv_patch}.

\subsection*{Acknowledgements}

The authors wish to thank David Gay, Daniel Hartman, Malcolm Gabbard, and Maggie Miller for helpful conversations.
The first author was supported by NSF grants DMS-2006029 and DMS-2405324.

\section{Background on finite group actions}
\label{sec:background}

In this section, we recall some basic definitions regarding group actions that will be used extensively in what follows.
We refer the reader to~\cite{Bre_72_Introduction-to-compact-transformation-groups} for a thorough treatment with complete details.
Throughout the paper, $G$ will denote a finite group, and all group actions considered will be faithful, smooth, and properly discontinuous.
Let $M$ be a manifold, and let $G$ act on $M$.
We will always assume that every element of $G$ acts on $M$ as an orientation-preserving diffeomorphism, i.e. the group acts \emph{orientation-preservingly}, unless explicitly indicated otherwise.
We encode this set-up by saying that $M$ is a \emph{$G$--manifold}.

We identify a group action $\rho\colon G\to \Diff(M)$ with its image in $\Diff(M)$, so that $\rho$ is identified with $\rho\circ\phi$ for any automorphism $\phi$ of $G$. 
Applying $\phi$ amounts to relabelling the elements in the image of $\rho$ and will not be regarded as a distinct action.
Since we assume all actions of $G$ are faithful, we assume $\rho$ is injective.
Two actions $\rho_1$ and $\rho_2$ of $G$ on $M$ are \emph{(smoothly) equivalent} if there exists $\psi\in \Diff(M)$ so that $\psi^{-1}\rho_1(g)\psi = \rho_2(g)$ for all $g\in G$, i.e. the two actions are conjugate in $\Diff(M)$\footnote{
Again, strictly speaking we ask that the images are conjugate subgroups by $\psi$, but we can always choose isomorphisms $\rho_1$ and $\rho_2$ of $G$ onto these two subgroups which satisfy the above equation on the nose.}.
This coincides with the notion that $(M, \rho_1)$ and $(M, \rho_2)$ are equivariantly diffeomorphic by a diffeomorphism $\psi$.

A submanifold $N\subseteq M$ is \emph{invariant} if $g\cdot N = N$ for all $g\in G$.
An invariant submanifold always admits an \emph{open invariant tubular neighborhood} $\nu(N)$, which is a (smooth) $G$--vector bundle over $N$; any such open invariant tubular neighborhood contains a closed invariant tubular neighborhood~\cite[Chapter~VI, Theorem~2.2]{Bre_72_Introduction-to-compact-transformation-groups}.
A submanifold $N\subseteq M$ is \emph{equivariant} if, for all $g\in G$, either $g\cdot N = N$ or $(g\cdot N)\cap N = \varnothing$.
Clearly all invariant submanifolds are equivariant.

Given an equivariant submanifold $N\subseteq M$, the \emph{stabilizer} of $N$ is the subgroup of $G$ consisting of those $g\in G$ such that $g\cdot N = N$ and is denoted $\Stab_G(N)$.
The \emph{point-wise stabilizer} of $N$ is the set of elements $g\in G$ that fix each point in $N$.
Note that the point-wise stabilizer of $N$ is normal in the set-wise stabilizer of $N$; we denote the corresponding quotient by $G_N$.
The quotient group $G_N$ acts on $N$, and we refer to this action as the \emph{induced action} of $G$ on $N$.
We say that $G$ acts on $N$ with some property (say, orientation-preservingly) if $G_N$ acts on $N$ with that property.
Note that if $M$ is connected and $N$ is codimension--$0$ (such as when $N$ is a tubular neighborhood, as will often be the case) then $G_N = \Stab_G(N)$: if $g\in G$ fixes a codimension--$0$ submanifold of connected manifold $M$ point-wise, then $g$ fixes all of $M$ and is hence trivial by faithfulness.
Therefore, for orientation-preserving $G$--actions, $G$ always acts on $\nu(N)$ orientation-preservingly, even when $G$ does not act orientation-preservingly on $N$.

\subsection{Linear actions}
\label{subsec:linear}

A central aspect of this work is the idea of breaking up $4$--dimensional $G$--manifolds into simple pieces for which the induced actions can be understood as amalgamations of linear actions.

\begin{definition}
	A $G$--action on $B^n$ is \emph{linear} if it is smoothly equivalent to the action of an $n$--dimensional, orthogonal representation of $G$ on $B^n$.
	A $G$--action on $S^{n-1}$ is \emph{linear} if it is the boundary of a linear $G$--action on $B^n$.
\end{definition}

The following important observation requires a quick definition.
Given a $G$--action on $M$, the \emph{equivariant cone} of the action is the $G$--action on the cone on $M$ given by $g\cdot(x,t) = (g\cdot x,t)$ for all $g\in G$ and each $(x,t)\in (M\times[0,1])/\{(y,0)\sim(y',0)\}$.

\begin{lemma}
\label{lem:balls_as_cones}
	A linear $G$--action on $B^n$ is the equivariant cone of the induced $G$--action on $\partial B^n$, with the origin as cone point.
	Furthermore, if the action on $B^n$ has a fixed point $p$ in $\partial B^n$, then the action on $B^n$ is the equivariant cone of the induced action on an invariant $(n-1)$--ball in $\partial B^n$ not containing $p$.
\end{lemma}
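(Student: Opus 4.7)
The plan is to prove the two parts separately. Part 1 follows directly from a polar-coordinate construction, while Part 2 requires identifying $B^n$ with an explicit equivariant ``ice-cream cone'' and extending a boundary identification inward using Part 1 one dimension lower.

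For Part 1, I would exhibit the equivariant map $\Phi\colon B^n \to CS^{n-1}$ given by $\Phi(x) = [(x/|x|, |x|)]$ for $x\neq 0$ and $\Phi(0) = *$, where $CS^{n-1} = (S^{n-1}\times[0,1])/(S^{n-1}\times\{0\})$. Since a linear $G$-action is by orthogonal transformations, $|g\cdot x| = |x|$ and $g\cdot x/|g\cdot x| = g\cdot(x/|x|)$, so $\Phi$ is $G$-equivariant. The standard smoothing of $CS^{n-1}$ at the cone point depends only on $|x|$ and is hence $G$-equivariant, yielding the desired equivariant diffeomorphism, with the origin corresponding to the cone point.

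For Part 2, given a fixed point $p\in\partial B^n$, the orthogonal representation splits $G$-equivariantly as $\mathbb{R}^n = \mathbb{R}\cdot p \oplus p^\perp$ with trivial action on $\mathbb{R}\cdot p$. I would take the invariant $(n-1)$-ball to be the opposite hemisphere $D = \{x\in\partial B^n : \langle x, p\rangle \leq 0\}$; invariance follows from $\langle g\cdot x, p\rangle = \langle g\cdot x, g\cdot p\rangle = \langle x, p\rangle$, and $p\notin D$ since $\langle p, p\rangle = 1$. The goal is to construct a $G$-equivariant diffeomorphism $\Psi\colon B^n \to CD$ sending $p$ to the cone point. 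The strategy is to first realize the straight-line cone $IC = \{(1-t)p + tx : x\in D,\ t\in[0,1]\} \subset B^n$, which is $G$-invariant and is tautologically $G$-equivariantly homeomorphic to $CD$ via $(x,t)\mapsto (1-t)p + tx$, and then to produce a $G$-equivariant diffeomorphism $B^n \to IC$.

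Both $B^n$ and $IC$ are $n$-balls sharing $D$ on their boundaries, and differ only in the ``cap'' opposite $D$: $\partial B^n$ closes by the other hemisphere $D^+ = \{x\in\partial B^n : \langle x, p\rangle \geq 0\}$, while $\partial IC$ closes by the straight-line cone $C_p\partial D$ from $p$ over the equator $\partial D = S^{n-2}$. The key observation is that after the equivariant diffeomorphism $D^+ \to B^{n-1}\subset p^\perp$ given by $(y, \sqrt{1-|y|^2}) \mapsto y$, the induced action on $D^+$ becomes a linear action on $B^{n-1}$ fixing the center point $p$; applying Part 1 one dimension lower then yields $D^+ \cong_G C_p\partial D^+$, which matches $C_p\partial D$ exactly. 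I would then extend this boundary identification inward via an equivariant coning argument, noting that the shared invariant equatorial $S^{n-2}$ and its normal bundle data determine the extension up to equivariant isotopy. The main obstacle will be smoothness along the equatorial ridge where the two cone structures on $B^n$ (from the origin and from $p$) meet, requiring a standard equivariant corner-smoothing in an invariant collar of $\partial D$.
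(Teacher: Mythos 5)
Part~1 of your proposal is correct and is essentially the paper's argument: orthogonality preserves $|x|$, so the radial/polar structure is $G$--equivariant and exhibits $B^n$ as the equivariant cone on $\partial B^n$ with cone point the origin.

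Part~2 has a genuine gap at its main step. Your setup (the splitting $\R^n=\Span\{p\}\oplus p^\perp$ with trivial action on $\Span\{p\}$, the lower hemisphere $D$ as the invariant $(n-1)$--ball, and the identification $D^+\cong C_p\partial D$ via Part~1 one dimension down) is all fine and close in spirit to the paper. But the crux --- promoting the equivariant identification $\partial B^n\to\partial(IC)$ to an equivariant diffeomorphism $B^n\to IC$ --- is not established by the sentence ``the shared invariant equatorial $S^{n-2}$ and its normal bundle data determine the extension up to equivariant isotopy.'' That is not a valid principle: an equivariant diffeomorphism between the boundaries of two $G$--balls does not in general extend over the interiors, and this failure is precisely the phenomenon the paper highlights in its introduction (nonlinear actions on $B^4$ agreeing with a linear action on $\partial B^4$). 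So any extension argument must use the specific geometry, not just boundary data. The step is repairable: since $p$ is fixed and the action is orthogonal, $G$ permutes the rays emanating from $p$, each such ray meets $\partial B^n$ and $\partial(IC)$ in exactly one point apiece, and linearly rescaling along each ray gives a $G$--equivariant homeomorphism $B^n\to IC$ fixing $D$ pointwise; one must then verify smoothness away from $p$ and the equator and carry out the corner-smoothing you flagged. The paper sidesteps the extension problem entirely: after passing to product coordinates $[-1,1]\times B^{n-1}$, it writes down an explicit nested family of invariant $(n-1)$--balls $M_t=(\{p(1-2t)\}\times B_t^{n-1})\cup(\partial B_t^{n-1}\times[p(1-2t),p])$ shrinking to $p$ as $t\to 0$, each equivariantly identified with $M_1$ by scaling the two factors, which displays $B^n$ directly as the equivariant cone on $M_1$ with cone point $p$ with no boundary-extension step. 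Either route works, but as written your Part~2 asserts rather than proves its key step.
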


\begin{proof}
	The first claim follows from the fact that $G$ preserves an inner product, hence the length of every vector.
	Thus, $G$ leaves each sphere of radius $0<r\leq 1$ invariant and fixes the origin.
	Since $G$ is linear, the action on each sphere of radius $r$ is the same, so the action is a cone as desired.

	For the second claim, see Figure~\ref{fig:balls_as_cones}.
	Since $p$ is fixed, the span of $ p$ is fixed, and each $(n-1)$--ball orthogonal to this span is invariant.
	Thus, the $G$--action on $B^n$ is equivalent to a product $G$--action on
	$$\Span\{ p\}\oplus \Span\{ p\}^\perp \cong [-1,1]\times B^{n-1},$$
	with $G$ acting trivially on the interval factor.
	
	The interval can be regarded as the cone on $\{-p\}$ with cone point $p$, and each $B^{n-1}$ can be regarded as the cone on its boundary $(n-2)$--sphere with cone point its intersection with the span of $p$.
	For $t\in(0,1]$, let
	$$M_t = (\{p(1-2t)\}\times B_t^{n-1})\cup(\partial B_t^{n-1}\times[p(1-2t),p]),$$
	as depicted in Figure~\ref{fig:balls_as_cones}.
	Then, each $M_t$ is $G$--invariant, and $B^n$ is the cone on $M_1$ with cone point $p$.
	So, the $G$--action on $B^n$ is an equivariant cone on the induced action on the $(n-1)$--ball $M_1$, as desired.
\end{proof}

\begin{figure}[ht!]
	\centering
	\includegraphics[width = .6\linewidth]{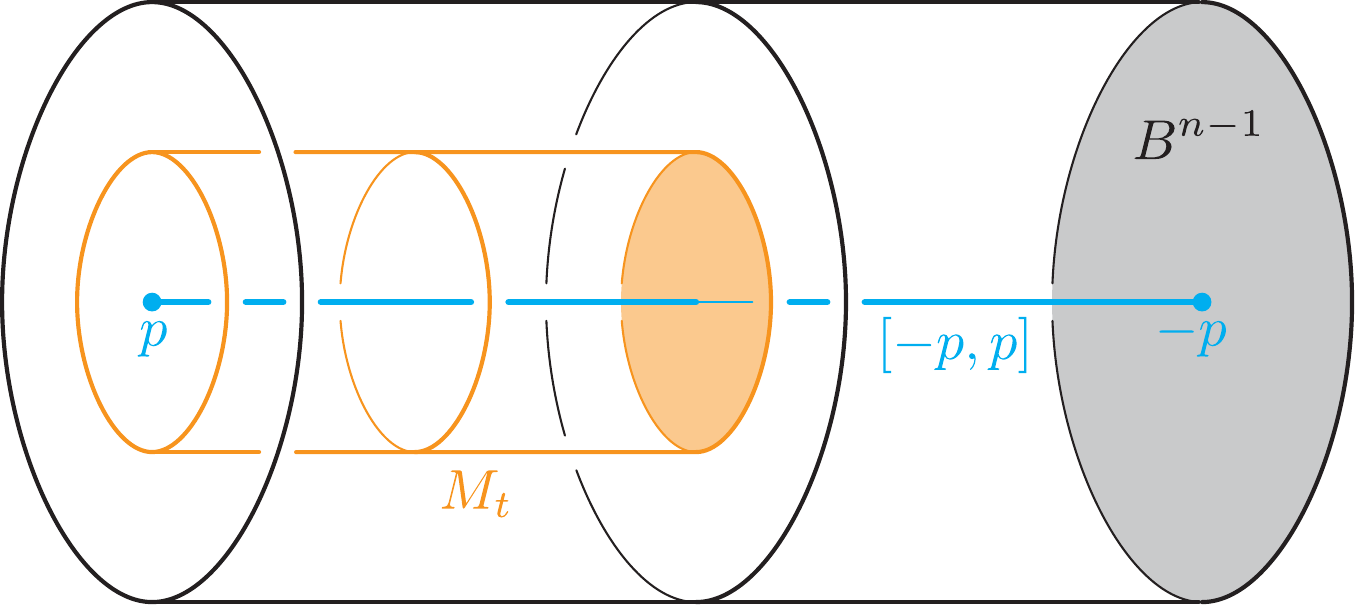}
    \caption{An action on $B^n$ fixing a point $p$ in the boundary is the equivariant cone on an invariant $B^{n-1}$ in its boundary with cone point $p$.}
    \label{fig:balls_as_cones}
\end{figure}

For actions that are equivariant cones, we have the following.

\begin{lemma}
\label{lem:cone_equivalence}
	Suppose $G$ acts on $M$ as the equivariant cone of a $G$--action on $N\subseteq M$ with cone point $p$.
	Suppose $A\subseteq M$ is any $G$--invariant neighborhood of $p$ such that the induced $G$--action on $A$ is an equivariant cone with cone point $p$.
	Then $A$ and $M$ are equivariantly diffeomorphic via a diffeomorphism that is the identity on some neighborhood of $p$ contained in both $A$ and $M$.
\end{lemma}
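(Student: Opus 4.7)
The plan is to use the cone scalings of both $M$ and $A$ to produce a common $G$--invariant subcone around $p$, and then to realize the complementary shells as $G$--equivariant cylinders which can be identified.

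Write $M = (N \times [0,1])/(N \times \{0\})$ with cone parameter $t$, and $A = (L \times [0,1])/(L \times \{0\})$ with cone parameter $s$, both with cone point $p$. Note that $p$ is a fixed point of the $G$--action since the cone action fixes $(x, 0)$ for every $x$. By standard equivariant linearization at the fixed point $p$ (combined with Lemma~\ref{lem:balls_as_cones}), both $N$ and $L$ are $G$--equivariantly diffeomorphic to the unit sphere in $T_p M$ with its linear $G$--action; in particular $N \cong L$ as $G$--manifolds. For each $r \in (0,1]$, the cone scaling $(x, t) \mapsto (x, rt)$ is a $G$--equivariant diffeomorphism $M \to M_r := \{t \leq r\}$ fixing $p$, and similarly $A \to A_r := \{s \leq r\}$. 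Using compactness of $N$ and the fact that $A$ is a neighborhood of $p$, choose $\epsilon \in (0, 1]$ such that $M_\epsilon \subseteq A$.

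The main step is to modify the inclusion $M_\epsilon \hookrightarrow A$ by a $G$--equivariant ambient isotopy of $A$, supported in $A \setminus \{p\}$, so that the image of $M_\epsilon$ coincides with a subcone $A_{\delta'}$ of $A$ for some $\delta' \in (0, 1]$. The boundaries $\partial M_\epsilon$ and $\partial A_{\delta'}$ are $G$--invariant spheres in $A \setminus \{p\}$, each equivariantly diffeomorphic to the link of $p$ and each transverse to the radial vector field of its respective cone structure. A $G$--invariant vector field on $A \setminus \{p\}$ transverse to both (obtained, for example, by $G$--averaging a cone radial vector field and cutting off near $p$) provides the required equivariant ambient isotopy. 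By choosing the cutoff appropriately, the isotopy can be arranged to leave fixed a smaller subcone $M_{\epsilon'} \subseteq M_\epsilon$ around $p$.

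After the isotopy, $M_\epsilon = A_{\delta'}$ as $G$--invariant subsets of $A$, and the complements $M \setminus M_\epsilon^\circ \cong N \times [\epsilon, 1]$ and $A \setminus A_{\delta'}^\circ \cong L \times [\delta', 1]$ are $G$--equivariant cylinders with common inner boundary $\partial M_\epsilon$ and outer boundaries $N \cong L$. These cylinders admit a $G$--equivariant identification rel $\partial M_\epsilon$; gluing this identification to the identity on $M_\epsilon$ produces the desired $G$--equivariant diffeomorphism $A \to M$, which restricts to the identity on the neighborhood $M_{\epsilon'}$ of $p$. The main obstacle is arranging the equivariant isotopy while preserving the product structure and fixing a neighborhood of $p$ pointwise; the linearization of both cone structures at the fixed point $p$ is the essential tool here, since it ensures that the two radial vector fields agree to first order at $p$ and can be smoothly interpolated equivariantly.
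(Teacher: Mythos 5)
The paper does not argue this lemma directly: it deduces it from the proofs of Lemma~8.2 and Theorem~8.3 of \cite[Chapter~II]{Bre_72_Introduction-to-compact-transformation-groups}, upgraded to the smooth category via Schwarz's covering homotopy theorem \cite{Sch_80_Lifting-smooth-homotopies}. Your attempt to replace that citation with a direct vector-field argument is a genuinely different route, but it has a gap exactly at the point where Bredon's machinery does the work. The ``main step'' --- an equivariant ambient isotopy of $A$ carrying $\partial M_\epsilon$ onto a cone level $\partial A_{\delta'}$ --- is not established by exhibiting a $G$--invariant vector field transverse to both hypersurfaces. If $\partial M_\epsilon$ and $\partial A_{\delta'}$ intersect (which they will in general, since the two cone structures need not be compatible away from $p$), a field pointing outward along both cannot flow one onto the other: a flow line starting at a point of $\partial M_\epsilon$ that already lies outside $\partial A_{\delta'}$ can never cross $\partial A_{\delta'}$ again. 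Even if you first shrink $\delta'$ so that $A_{\delta'}\subseteq M_\epsilon^{\circ}$, you then need the shell $M_\epsilon\setminus A_{\delta'}^{\circ}$ to be an equivariant product, and the natural candidate --- a $G$--invariant interpolation between the two radial fields --- may vanish in the interior of the shell, where the two radial directions can disagree arbitrarily. Nothing in your argument rules this out, and ``the two radial fields agree to first order at $p$'' does not control their behavior on the compact region between the two hypersurfaces. This is precisely the compatibility-of-cone-structures problem that the covering homotopy theorem resolves.

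A secondary issue: your identification of $N$ and $L$ with ``the unit sphere in $T_pM$ with its linear action'' is both circular within the paper and too restrictive. The paper derives Corollary~\ref{cor:Cones_Linear} (cones are linear, links are linear spheres or balls) \emph{from} Lemma~\ref{lem:cone_equivalence}, so you cannot assume it here; and when $p\in\partial M$ --- a case that actually occurs in the paper, cf.\ Lemma~\ref{lem:balls_as_cones}, where the cone is taken on an $(n-1)$--ball in the boundary --- the link of $p$ is a ball, not a sphere, and $M$ is a manifold with corners along $\partial N$, which your cylinder-gluing at the end would also need to accommodate. If you want a self-contained proof rather than the citation, the honest path is Bredon's: present $M\setminus\{p\}$ and $A\setminus\{p\}$ as equivariant mapping cylinders over the orbit space and use the (smooth, equivariant) covering homotopy theorem to compare the two projections; the naive transversality argument does not close.
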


\begin{proof}
	This follows from the proofs of Lemma~8.2 and Theorem~8.3 of~\cite[Chapter~II]{Bre_72_Introduction-to-compact-transformation-groups}, together with Schwarz's extension to the smooth setting of Palais'  covering homotopy theorem~\cite{Pal_60_The-classification-of-G-spaces, Sch_80_Lifting-smooth-homotopies}.
\end{proof}

Note that if a $G$--action on $M$ is an equivariant cone, then the cone point is a fixed point.
Every fixed point $p$ has a closed invariant tubular neighborhood $N$ for which the induced action $G_N$ is linear. 
If $p\in\partial M$, then $G_N$ fixes a point in $\partial N$, so this induced action is an equivariant cone in the second way described in Lemma~\ref{lem:cone_equivalence}.
Mirroring~\cite[Chapter II, Section~8]{Bre_72_Introduction-to-compact-transformation-groups}, this leads to the following characterization of equivariant cones.

\begin{corollary}
\label{cor:Cones_Linear}
	Suppose $G$ acts on $M$ as the equivariant cone of a $G$--action on $N\subseteq M$ with cone point $p$.
	Then $M$ is equivariantly diffeomorphic to a linear action on $B^n$, and either the action on $N$ is equivariantly diffeomorphic to a linear action on $S^{n-1} = \partial B^n$, which occurs when $p$ is in the interior of $M$, or the action on $N$ is equivariantly diffeomorphic to a linear action on $B^{n-1}\subset \partial B^n$, which occurs when $p$ is in the boundary of $M$.
\end{corollary}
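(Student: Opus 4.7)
The plan is to combine linearization at the cone point with the two lemmas just established. Since $M$ is the equivariant cone on $N$ with cone point $p$, the point $p$ is a fixed point of the $G$--action. Appealing to the standard slice theorem / Bochner-type linearization at a fixed point \cite[Chapter~II, Section~5 and Section~8]{Bre_72_Introduction-to-compact-transformation-groups}, we obtain a closed $G$--invariant neighborhood $U$ of $p$ in $M$ on which the induced action is $G$--equivalent to a linear action on $B^n$, where $n=\dim M$, with $p$ corresponding to a fixed point of this linear action.

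Suppose first that $p\in \Int(M)$. Then the linearizing identification sends $p$ to the origin of $B^n$. By the first part of Lemma~\ref{lem:balls_as_cones}, this linear action on $U\cong B^n$ is itself an equivariant cone with cone point $p$. Hence $U$ is a $G$--invariant neighborhood of $p$ carrying an equivariant-cone structure with the same cone point as the ambient cone on $M$, and Lemma~\ref{lem:cone_equivalence} supplies an equivariant diffeomorphism $M\to U\cong B^n$. Under this diffeomorphism, the outer boundary $N$ of the cone $M$ is identified with $S^{n-1}=\partial B^n$, inheriting the restriction of a linear $G$--action.

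Now suppose $p\in\partial M$. Then the linear model on $U\cong B^n$ has $p$ as a fixed point on $\partial B^n$. The second half of Lemma~\ref{lem:balls_as_cones} shows that this linear action is an equivariant cone on an invariant $(n-1)$--ball $B^{n-1}\subset \partial B^n$ disjoint from $p$, with $p$ as cone point. Lemma~\ref{lem:cone_equivalence} again produces an equivariant diffeomorphism $M\to U\cong B^n$, sending $N$ to this invariant $B^{n-1}\subset\partial B^n$ and so realizing the induced action on $N$ as a linear action on $B^{n-1}$.

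No step is genuinely hard: the two cases follow directly once the linear neighborhood $U$ is identified, and the only point requiring attention is to verify that both cone structures — the given one on $M$ and the one produced by Lemma~\ref{lem:balls_as_cones} on $U$ — share the same cone point $p$, which is the precise hypothesis needed to invoke Lemma~\ref{lem:cone_equivalence}.
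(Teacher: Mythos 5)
Your proposal is correct and follows essentially the same route as the paper: the paper's (implicit) argument is precisely to note that the cone point is fixed, take a closed invariant tubular neighborhood on which the action is linear, recognize that neighborhood as an equivariant cone with cone point $p$ via Lemma~\ref{lem:balls_as_cones} (origin case when $p$ is interior, boundary-fixed-point case when $p\in\partial M$), and then apply Lemma~\ref{lem:cone_equivalence} to identify $M$ with that linear ball. The only difference is that you spell out the case analysis slightly more explicitly than the paper, which simply cites Bredon and the preceding discussion.
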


The above discussion establishes the following analogy:
\emph{The analog of the ball $B^n$ (respectively, the sphere $S^n$) in the equivariant category is a \emph{linear} action on $B^n$ (respectively, a linear action on $S^n$.)}

If we take the equivariant cone of a nonlinear $G$--action on $S^n$, we get a non-smooth action on $B^{n+1}$, with the failure of smoothness occurring at the cone point.
The above corollary shows that for any real representation of $G$ acting on $\R^n$, the induced action on any star-like invariant ball containing the origin is smoothly equivalent to a linear action.
This is a nice criterion for linearity that immediately implies that the suspension of a linear action is linear, among other consequences.

\section{Equivariant $1$--handlebodies}
\label{sec:equivariant_handlebodies}

The classical Laudenbach and Po\'enaru Theorem concerns fillings of $\#^n(S^1\times S^2)$ by $4$--dimensional $1$--handlebodies $\natural^n(S^1\times B^3)$.
To generalize this theorem to the equivariant setting, we need to identify the correct analog to a $1$--handlebody in the equivariant category.
More precisely, we need to identify a sufficiently well-behaived class of actions on $1$--handlebodies.
In Section~\ref{subsec:linear}, we showed that the correct equivariant analog to the ball $B^n$ is the ball $B^n$ equipped with a linear action.
The equivariant analog to a $1$--handlebody will be built out of these linear actions on balls.

\begin{definition}
\label{def:linearly_parted}
	Let $H$ be an $n$--dimensional 1--handlebody.
	A \emph{ball-system} for $H$ is a disjoint union $\Bb$ of neatly embedded\footnote{
	A submanifold $N$ is \emph{neatly embedded} in $M$ if $\partial N = N\cap\partial M$ and $N$ is transverse to $\partial M$~\cite[Chapter~1.4]{Hir_94_Differential-topology}.
	}
	$(n-1)$--balls in $H$ such that $H\setminus\nu(\Bb)$ is a disjoint union of $n$--balls.
	We say that a $G$--action on $H$ is \emph{parted} if there exists a $G$--invariant ball-system $\Bb$ for $H$.
	We say that a $G$--action on $H$ is \emph{linearly parted} if it is parted by $\Bb$, and the action of $G$ on each $(n-1)$--ball of $\Bb$ and on each $n$--ball component of $H\setminus\Bb$ is linear.
\end{definition}

In other words, the $G$--manifold $H$ is linearly parted if and only if it admits a decomposition into equivariant $0$--handles and $1$--handles such that the induced action on each handle is linear.
For this reason, we refer $H$ as a \emph{$G$--equivariant} $1$--handlebody.

\begin{remark}
\label{rmk:automatic_linear}
	In dimensions four and below, the condition of linearity for the induced $G$--action on $\Bb$ is automatic, though this requires deep theorems from $3$--manifold topology (cf. the~\ref{LT} below). We nonetheless state the definition in arbitrary dimension for completeness.
\end{remark}

For complete details on the handlebody perspective, we refer the reader to Wasserman's development of equivariant Morse theory and handlebody theory~\cite{Was_69_Equivariant-differential-topology}.
Nonetheless, the following definition will be useful throughout this work.

\begin{definition}
\label{def:equivariant_handle}
A \textit{$G$--equivariant $n$--dimensional $k$--handle bundle} $\frak H$ is a $G$--manifold diffeomorphic to a disjoint union of $n$--balls $D^{k}\times D^{n-k}$, so that the $G$--action on $\frak H$ is transitive on components and the stabilizer of each component $\frak h\subseteq \frak H$ (which we refer to as an \emph{equivariant handle}) acts on $\frak h \cong D^{k}\times D^{n-k}$ preserving the product and linearly in each factor.
\end{definition}

It is clear that, given an equivariant $(k-1)$--sphere $S$ in $\partial X$ for a $G$--manifold $X$, after choosing an equivariant diffeomorphism from $\partial D^k\times D^{n-k}$ to an equivariant regular neighborhood of $S$, we may attach $\frak H$ along the orbit of $S$ equivariantly to get a new $G$--manifold $X'$, the result of $k$--handle attachment to $X$ along $S$.
We will sometimes refer to $X'$ as $X\cup \frak h$, where $\frak h\subseteq \frak H$ is attached along $S$, and say $X'$ is the result of equivariant $k$--handle attachment to $X$, where it is understood that $X'$ is $X$ union the handle-bundle $\frak H$, the orbit of $\frak h$.
We'll make use of this terminology in Lemmata~\ref{lem:unique_handle_attachment} and~\ref{lem:2_handle_attachment}.

\subsection{Linear actions in dimension three}
\label{subsec:dim3}

To motivate Definition~\ref{def:linearly_parted}, we now briefly review important results from $3$--dimensional equivariant topology that are central to the present paper.
First, we have the Equivariant Loop Theorem of Meeks and Yau~\cite{MeeSimYau_82_Embedded-minimal,MeeYau_79_The-classical-Plateau,MeeYau_80_Topology-of-three-dimensional}, given here for reference as stated by Edmonds~\cite{Edm_86_A-topological-proof}, who gave a topological proof.

\stepcounter{theorem}
\begin{named}{Equivariant Loop Theorem~\thetheorem}
\label{ELT}
	Let $G$ act on a $3$--manifold $M$.
	Let $C\subset\partial M$ be a simple closed curve such that
	\begin{enumerate}
		\item $C$ is null-homotopic in $M$,
		\item $C$ is $G$--equivariant, and
		\item $C$ is transverse to the exceptional set $E$ of the action of $G$ on $\partial M$.
	\end{enumerate}
	Then there is a $G$--equivariant, neatly embedded disk $D\subset M$ with $\partial D=C$.
\end{named}

The next result is a consequence of the resolutions of the Spherical Space Form Conjecture~\cite{Per_02_The-entropy-formula,Per_03_Finite-extinction,Per_03_Ricci-flow}) and the Smith Conjecture \cite{MorBas_84_The-Smith-conjecture}.

\stepcounter{theorem}
\begin{named}{Linearization Theorem~\thetheorem}
\label{LT}
	Every group action on $S^3$ or $B^3$ is linear.
\end{named}

Together, these two major theorems imply that every action on a 3--dimensional 1--handlebody is linearly parted; cf. work of McCullough, Miller, and Zimmerman that develops a general theory for studying actions on 3--dimensional 1--handlebodies~\cite{McCMilZim_89_Group-actions-on-handlebodies}.

\begin{corollary}
\label{cor:3d_parted}
	Let $H$ be a 3--dimensional 1--handlebody, and let $G$ act on $H$.
	There exists an equivariant disk-system $\Dd$ for $H$ such that the action of $G$ on each 3--ball of $H\setminus\Dd$ is equivalent to a linear action.
\end{corollary}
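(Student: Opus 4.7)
\textbf{Proof proposal for Corollary~\ref{cor:3d_parted}.}

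The plan is to proceed by induction on the genus $g(H)$, using the~\ref{ELT} to successively compress $H$ along $G$--equivariant disks and the~\ref{LT} to linearize the action on each resulting 3--ball. For the base case $g(H)=0$, $H$ is a disjoint union of 3--balls permuted by $G$; for each $G$--orbit of balls, the stabilizer of a representative acts on that $B^3$, and the~\ref{LT} shows this action is linear, so the empty disk-system works. This reduces the problem to the inductive step, where $g(H) \geq 1$.

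The inductive step proceeds in two main stages. First, I would produce a $G$--equivariant simple closed curve $C \subset \partial H$ that bounds a disk in $H$ and is transverse to the exceptional set $E \subset \partial H$ of the induced action. Starting from any essential simple closed curve $C_0 \subset \partial H$ compressing in $H$, consider its orbit $G \cdot C_0$. If the translates are pairwise either disjoint or coincident, then $C_0$ is already equivariant; otherwise, because every translate $g \cdot C_0$ bounds a compressing disk in $H$, an innermost-disk argument executed simultaneously across the full orbit---so as to respect the $G$--action at each step---strictly reduces the total self-intersection of the orbit. Iterating yields a $G$--equivariant essential simple closed curve $C$, and a small equivariant perturbation arranges transversality to $E$.

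Second, I would apply the~\ref{ELT} to $C$ to obtain a $G$--equivariant, neatly embedded disk $D \subset H$ with $\partial D = C$. The orbit $\Dd_0 = G \cdot D$ is then a $G$--invariant disjoint union of compressing disks. Cutting $H$ along $\Dd_0$ yields a (possibly disconnected) 3--dimensional 1--handlebody $H'$ carrying a natural induced $G$--action, and the total genus drops strictly. Applying the inductive hypothesis to $H'$ (using that $G$ permutes components and the stabilizer of each component acts on it) yields an equivariant disk-system $\Dd'$ for $H'$ whose complement is a disjoint union of 3--balls with linear stabilizer actions. Reassembling $H'$ back to $H$, the union $\Dd = \Dd_0 \cup \Dd'$ is an equivariant disk-system for $H$, and the~\ref{LT} guarantees that every ball component of $H \setminus \nu(\Dd)$ carries a linear action of its stabilizer.

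The main obstacle is the construction of the $G$--equivariant compressing curve $C$. Naively taking the orbit of an arbitrary compressing curve creates transverse intersections that must be eliminated \emph{equivariantly}, so the innermost-disk reduction must be carried out simultaneously across every translate rather than one intersection at a time. Once $C$ is in hand, the~\ref{ELT} and the inductive machinery handle the rest routinely, and the~\ref{LT} is applied only to the 3--ball pieces at the end.
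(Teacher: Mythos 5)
Your overall architecture --- induct on genus, cut along the orbit of an equivariant compressing disk supplied by the~\ref{ELT}, and finish with the~\ref{LT} on the resulting balls --- is exactly the route the paper intends; the paper itself gives no details beyond asserting that the corollary follows from these two theorems (with a pointer to~\cite{McCMilZim_89_Group-actions-on-handlebodies}), so fleshing out the induction is the right thing to do, and the cutting/reassembly and base case are fine.

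The genuine gap is in your first stage: the construction of the $G$--equivariant curve $C$ by ``an innermost-disk argument executed simultaneously across the full orbit.'' This step is not routine --- it is essentially the content of the equivariant Dehn's lemma/loop theorem itself, and the naive simultaneous surgery does not obviously work. Concretely: to reduce $|C_0\cap g\cdot C_0|$ you must surger the compressing disks, and to stay equivariant you must perform the surgery along the entire orbit of an innermost disk $E$ at once; but the translates $h\cdot E$ need not be disjoint from one another or from the disks being surgered, so the simultaneous surgeries interfere, and even when they can be performed there is no guarantee that the total intersection number of the orbit strictly decreases or that the new boundary curves remain essential. These are precisely the obstructions that forced Meeks--Yau to use minimal surfaces and Edmonds to use a delicate tower argument. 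The fix is not to manufacture $C$ by hand but to invoke the stronger form of the Equivariant Loop Theorem (as in~\cite{MeeYau_80_Topology-of-three-dimensional} or~\cite{Edm_86_A-topological-proof}): whenever $\partial H$ is compressible, there exists a compressing disk $D$ whose $G$--translates are pairwise disjoint or equal --- no equivariant curve is needed as input. With that substitution, your induction goes through; as written, the step you yourself flag as ``the main obstacle'' is unproved and would not survive as an elementary argument.
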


In light of this, we can conceptualize the notion of a linearly parted action on an $n$--dimensional 1--handlebody as an analog of the well-behaved nature of actions on 3--dimensional 1--handlebodies. 

\subsection{Linearly parted actions on $B^4$}
\label{subsec:B4action}

To our knowledge, it is not known whether a $G$--action on a 4--dimensional 1--handlebody is necessarily parted (in constrast to the above), and open questions along this line are, in our view, quite interesting; see Section~\ref{sec:open_questions} for more discussion.
We can say, however, that there are $G$--actions on 4--dimensional 1--handlebodies that are not \emph{\textbf{linearly}} parted; see Corollary~\ref{cor:not_parted_B4}.
This is a consequence of the following lemma, which establishes a condition for equivariant handle-cancellation and implies (see Corollary~\ref{cor:parted_Bn}) that linearly parted actions on $B^4$ are linear.

\begin{lemma}
\label{lem:cancel_parting}
	Let $G$ act on an $n$--dimensional 1--handlebody $H$ so that the action is linearly parted by a ball-system $\Bb$.
	Let $Z_1$ and $Z_2$ be two 0--handles that are in different $G$--orbits, and let $B = Z_1\cap Z_2$ be a parting ball in $\Bb$.
	If $B$ is invariant with respect to the action of $\Stab_G(Z_2)$, then $\Bb\setminus(G\cdot B)$ is a linear parting system for $H$.
	In other words the triple $(Z_1, B, Z_2)$ can be replaced with a single 0--handle $Z = Z_1\cup_B Z_2$, cancelling $Z_2$ and $B$, and this can be extended equivariantly to the orbit of $(Z_1, B, Z_2)$.
\end{lemma}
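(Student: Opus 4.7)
My plan proceeds in three stages: identifying the relevant stabilizers, performing the cancellation equivariantly across the $G$-orbit, and verifying that the resulting action on each merged 0-handle is linear.

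I would first establish $\Stab_G(Z_2) = \Stab_G(B) \subseteq \Stab_G(Z_1)$. The hypothesis gives one containment, and for the reverse, any $g \in \Stab_G(B)$ permutes the two 0-handles $\{Z_1, Z_2\}$ adjacent to $B$; since these lie in distinct $G$-orbits, $g$ cannot swap them, so $\Stab_G(B) \subseteq \Stab_G(Z_1) \cap \Stab_G(Z_2)$. Topologically, $Z_1 \cup_B Z_2$ is (after smoothing along $\partial B$) an $n$-ball, since an $n$-ball is attached along an $(n-1)$-ball in its boundary. Transporting this cancellation across the $G$-orbit of $(Z_1, B, Z_2)$ produces new 0-handles; because $\Stab_G(Z_2) \subseteq \Stab_G(Z_1)$ may be strict, several $G$-translates of $Z_2$ may be attached to a single $G$-translate $Z_1' \in G \cdot Z_1$, which absorbs all of them into a merged handle $Z'$. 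The ball-system $\Bb \setminus (G \cdot B)$ is then $G$-invariant and neatly embedded, and its complement in $H$ is the disjoint union of the merged handles and the 0-handles left untouched.

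The heart of the proof is to show that the $\Stab_G(Z_1')$-action on each merged handle $Z'$ is linear. Picking a linearization of $Z_1'$, the center $p$ of $Z_1'$ is a $\Stab_G(Z_1')$-fixed point in the interior of $Z'$. Each attached translate of $Z_2$ has a linear stabilizer action, and by the invariance of its gluing ball the center of that $B$ is a fixed point of $\Stab_G(Z_2)$ on the boundary of the translate, so by Lemma~\ref{lem:balls_as_cones} each such translate is the equivariant cone on an invariant $(n-1)$-ball in its boundary with cone point in $B$. Using the equivariant connectedness of the diffeomorphism group of $B^{n-1}$ in the relevant low dimensions (the Linearization Theorem together with the equivariant results of Dinkelbach-Leeb and Mecchia-Seppi cited in the introduction), the gluing maps can be equivariantly straightened. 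Each attached translate of $Z_2$ may then be equivariantly isotoped into a small collar of its gluing ball in $Z_1'$, showing that $Z'$ is equivariantly diffeomorphic to $Z_1'$ and hence carries a linear $\Stab_G(Z_1')$-action, as desired.

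The principal obstacle is this linearity step: although each piece carries a linear stabilizer action, the gluing maps along the copies of $B$ are only equivariantly smooth a priori, and equivariantly straightening them is what permits the pieces to be amalgamated into a single linear $\Stab_G(Z_1')$-action on $Z'$. This is precisely where the paper's low-dimensional equivariant isotopy input becomes essential.
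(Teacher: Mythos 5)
Your stabilizer bookkeeping in the first paragraph is correct and actually sharper than what the paper writes down: the identity $\Stab_G(Z_2)=\Stab_G(B)\subseteq\Stab_G(Z_1)$ and the observation that several translates of $Z_2$ may be absorbed into a single translate of $Z_1$ are real points that the paper's proof leaves implicit in the phrase ``apply the same transformation along the entire $G$--orbit of $B$.'' The overall skeleton --- find a fixed point $p\in B$, recognize $Z_2$ as an equivariant cone with cone point $p$ via Lemma~\ref{lem:balls_as_cones}, and absorb $Z_2$ into $Z_1$ --- is also the paper's skeleton.

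The gap is in how you execute the absorption. The paper's mechanism is Lemma~\ref{lem:cone_equivalence} (the equivariant cone equivalence, resting on Bredon and the Palais--Schwarz covering homotopy theorem), applied twice: $\nu(B)\cap Z_2\cong B\times[0,1]$ is itself an equivariant cone with cone point $p$, so $Z_2$ collapses equivariantly onto this collar, and then the collar is absorbed; no straightening of any gluing map occurs, and no low-dimensional input is used --- which matters, because the lemma is stated for arbitrary $n$ and your argument would only cover $n\le 4$. Your proposed substitute does not go through as written: (i) there is no gluing map to straighten, since $Z_1\cap Z_2=B$ on the nose inside $H$ rather than via an identification; (ii) the Dinkelbach--Leeb and Mecchia--Seppi results concern equivariant diffeomorphisms of $S^2$ and $S^3$, not equivariant diffeomorphisms of balls (rel boundary), which is what a straightening argument along $B$ would actually require; and (iii) ``equivariantly isotoping each translate of $Z_2$ into a small collar of its gluing ball in $Z_1'$'' is not an isotopy in any ambient manifold --- $Z_2$ is not a subset of $Z_1'$ --- but an equivariant diffeomorphism of $Z_1\cup_B Z_2$ onto $Z_1$, and producing that diffeomorphism is exactly the content you would need to supply. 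You correctly identify this as the principal obstacle, but the tool you reach for is not the one that resolves it; the cone machinery of Section~\ref{subsec:linear} is.
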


An example of the set-up for this lemma is shown in Figure~\ref{fig:ring_of_balls}.

\begin{proof}
	We first show that the action on $Z_1\cup_BZ_2$ by $\Stab_G(Z_2)$ is linear.
	By hypothesis, $B$ is $\Stab_G(Z_2)$--invariant, and the induced $\Stab_G(Z_2)$--action on $B$ is linear.\footnote{In this paper, $n\leq 4$, so the induced $\Stab_G(Z_2)$--action on $B$ is automatically linear; see Remark~\ref{rmk:automatic_linear}.}
	Thus, there is a fixed point $p\in B$.
	
	Taking an invariant tubular neighborhood $\nu(B)$ of $B$ in $Z_1\cup_B\cup Z_2$, we see that $\nu(B)\cap Z_2$ is diffeomorphic to a product $B\times [0,1]$ with $B$ being $B\times\{0\}$.
	Hence, the induced action of $G$ on $\nu(B)\cap Z_2$ is an equivariant cone with cone point $p$, as is the induced action of $G$ on $Z_2$, by Lemma~\ref{lem:balls_as_cones}.

	Contracting $B$ onto $\nu(p)$, we have that $Z=Z_1\cup_B Z_2$ is $\Stab_G(Z_2)$--equivariantly diffeomorphic to $Z_1\cup_{\nu(p)}Z_2$, by Lemma~\ref{lem:cone_equivalence}.
	Again by Lemma~\ref{lem:cone_equivalence}, we can contract $Z_2$ onto $(\nu(B)\cap Z_2)$, that is, $Z_1\cup_{\nu(p)}Z_2$ is $\Stab_G(Z_2)$--equivariantly diffeomorphic to $Z_1\cup_{\nu(p)}(\nu(B)\cap Z_2)$, and since $\nu(B)\cap Z_2$ is equivariantly diffeomorphic to $B\times[0,1]$, this union is equivariantly diffeomorphic to $Z_1$, so the action on $Z = Z_1\cup_B Z_2$ is linear, as is the action on $Z_1$.
	
	Thus, replacing $(Z_1,B,Z_2)$ with $Z$ gives rise to a new linear parting with respect to $\Stab_G(Z_2)$.
	Since $Z_2$ is an equivariant submanifold, and since $Z_1$ and $Z_2$ are in different $G$--orbits, we can apply the same transformation along the entire $G$--orbit of $B$ to get a new linear parting for $H$.
\end{proof}

\begin{wrapfigure}[16]{r}{0.42\textwidth}
\vspace{-5mm}
\begin{mdframed}
	\centering
	\includegraphics[width=\textwidth]{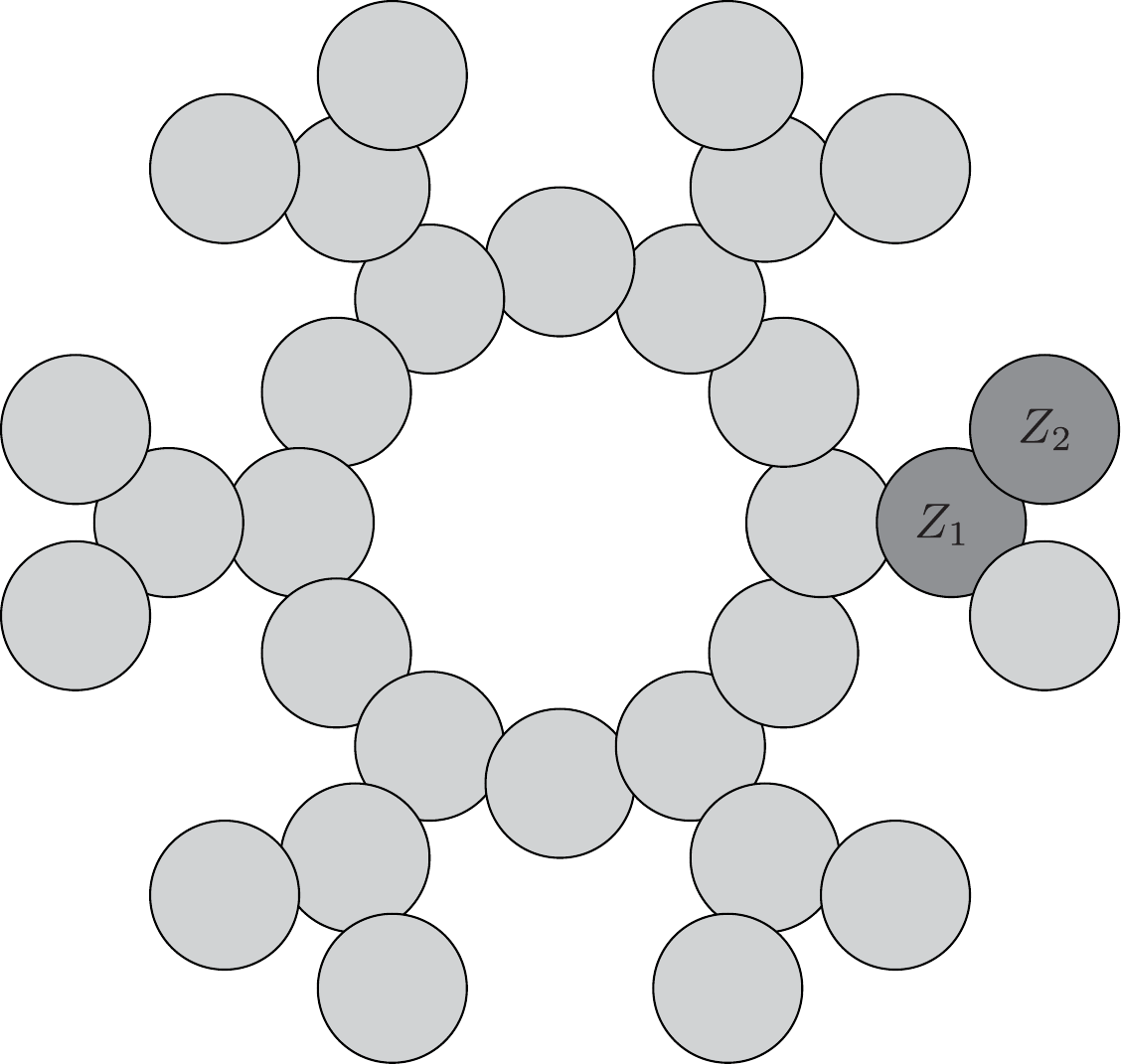}
	\caption{A solid torus, linearly parted into 3--balls with respect to the obvious, cyclic $\Z_6$--action}
	\label{fig:ring_of_balls}
\end{mdframed}
\end{wrapfigure}

In contrast to the above lemma, it is easy to see that there can be pairs of equivariant 0--handles and 1--handles that cancel topologically but not equivariantly: For example, the linear parting illustrated in Figure~\ref{fig:ring_of_balls} can be simplified using Lemma~\ref{lem:cancel_parting} until it is a ring of six balls, but it cannot be simplified further.
If the action in Figure~\ref{fig:ring_of_balls} is promoted to a $D_6$--action, the linear parting can still be reduced to a ring of six balls, but more care is needed.

An immediate consequence is a characterization of linear partings of actions on balls.

\begin{corollary}
\label{cor:parted_Bn}
	A $G$--action on $B^n$ is linearly parted if and only if it is linear.
\end{corollary}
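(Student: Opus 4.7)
The ``if'' direction is immediate: a linear $G$--action on $B^n$ is linearly parted by the empty ball-system. For the converse, I plan to induct on $|\Bb|$. The base case $|\Bb| = 0$ makes $H = B^n$ a single equivariant $0$--handle with linear action. For the inductive step, I will use Lemma~\ref{lem:cancel_parting} to reduce the number of parting balls.

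To find a cancellable pair, consider the dual graph of the parting (vertices are $0$--handles, edges are parting balls), which is a tree $T$ since $B^n$ is simply connected, equipped with its induced $G$--action. If some leaf $v \in T$ has its unique neighbor $u$ in a different $G$--orbit, then the parting ball $B$ between them is automatically $\Stab_G(Z_v)$--invariant (it is the only parting ball adjacent to the leaf $Z_v$), so Lemma~\ref{lem:cancel_parting} applied with $Z_2 = Z_v$ and $Z_1 = Z_u$ removes the $G$--orbit $G \cdot B$ from the parting, strictly reducing $|\Bb|$ and allowing the induction to continue.

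The main obstacle is the case where every leaf of $T$ lies in the same $G$--orbit as its neighbor. Since the $G$--action on $T$ preserves vertex degrees, the orbit of any leaf consists of leaves, so if a leaf $v$ has its only neighbor $u$ in its orbit then $u$ is itself a leaf whose only neighbor is $v$, forcing the tree $T$ to be a single edge. Thus $H = Z_1 \cup_B Z_2$ with some $g_0 \in G$ swapping $Z_1$ and $Z_2$, and $K := \Stab_G(Z_1) = \Stab_G(Z_2)$ is a normal index-$2$ subgroup of $G$. Here Lemma~\ref{lem:cancel_parting} does not directly apply.

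To handle this base case, I will exhibit a $G$--equivariant cone structure on $H$ with cone point a $G$--fixed interior point. Since $\{B\}$ is the unique $G$--orbit of parting balls, $\Stab_G(B) = G$, so by the parting hypothesis $G$ acts linearly on $B$ and therefore admits a fixed point $p \in \Int(B) \subseteq \Int(H)$. The $K$--action on each $Z_i$ is linear and fixes $p \in \partial Z_i$, so by Lemma~\ref{lem:balls_as_cones} it is a $K$--equivariant cone from $p$ on some $K$--invariant $(n-1)$--ball in $\partial Z_i \setminus \{p\}$; after a $K$--equivariant isotopy we may take this ball to be $\partial Z_i \cap \partial H$. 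Choosing the cone structure on $Z_2$ to be the $g_0$--pullback of that on $Z_1$ (still $K$--equivariant because $K$ is normal) makes $g_0$ send cone rays to cone rays, and the two cone structures agree on $B$, whose points lie on cone rays terminating in $\partial B \subset \partial H$. Hence $H$ is a $G$--equivariant cone from the interior point $p$ on $\partial H$, so Corollary~\ref{cor:Cones_Linear} concludes that the action is equivalent to a linear action on $B^n$.
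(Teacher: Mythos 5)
Your overall strategy coincides with the paper's: form the dual tree of the parting, repeatedly cancel a leaf $0$--handle against its unique adjacent parting ball via Lemma~\ref{lem:cancel_parting}, and induct. You also correctly isolate the one configuration where that lemma does not apply, namely when the tree is a single edge $Z_1 - B - Z_2$ with some $g_0\in G$ swapping the two $0$--handles. Where you diverge is in how that exceptional case is handled. The paper dissolves it \emph{before} the induction starts: each parting ball $B$ whose stabilizer contains a side-swapping element is replaced by the two parallel copies $\partial\nu(B)\setminus\partial B^n$, which inserts the invariant product $0$--handle $\nu(B)$ between $Z_1$ and $Z_2$. After this preprocessing no parting ball is side-swapped, so a leaf and its neighbor always lie in different orbits (a graph automorphism preserves valence, so same-orbit adjacent vertices force the single-edge-with-swap picture, which has been excluded), and Lemma~\ref{lem:cancel_parting} applies unconditionally.

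Your direct cone argument for the exceptional case is a genuinely different route, but as written it has a gap at the gluing step. You define the cone structure on $Z_2$ as the $g_0$--pushforward of a $K$--equivariant cone structure on $Z_1$. Restricted to $B$, the $Z_1$--structure determines a family of rays from $p$ to $\partial B$, and the $Z_2$--structure determines the $g_0$--images of those rays; the assertion that ``the two cone structures agree on $B$'' is exactly the assertion that $g_0$ preserves this ray family. But the cone structure on $Z_1$ is only $K$--equivariant and $g_0\notin K$, so nothing in your construction guarantees this. (A correct fix is to first fix a $G$--equivariant radial structure on $B$ itself, using linearity of the induced $G$--action on $B$ with cone point $p$, and then build the cone structure on $Z_1$ compatibly with it; only then is the pushforward consistent along $B$.) A second, smaller gap is the claim that ``after a $K$--equivariant isotopy we may take this ball to be $\partial Z_i\cap\partial H$'': this amounts to asserting that the pair $(Z_i,B)$ with its $K$--action is equivalent to the product model $\left([-1,1]\times B^{n-1},\{1\}\times B^{n-1}\right)$, which needs an argument (e.g., via Lemma~\ref{lem:cone_equivalence} applied to $B$ and a small invariant disk about $p$). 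Both issues are repairable, but the paper's doubling trick avoids them entirely by reducing the swap case to the already-established Lemma~\ref{lem:cancel_parting}.
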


\begin{proof}
	The reverse direction is trivial: linear actions are automatically linearly parted by the empty ball-system.
	
	Conversely, suppose the action of $G$ on $B^n$ is linearly parted.
	If necessary, modify the linear parting system so that every parting ball has orientation-preserving stabilizer, i.e. so that no ball has an element of $G$ which swaps its two sides.
	This can be done by replacing each parting ball $B$ with the pair $\partial\nu(B)\setminus\partial B^n$.
	The induced graph, with a vertex for each 0--handle and an edge for each 1--handle, must be a tree since $B^n$ is contractible.
	If this tree is a single vertex, we're done.
	Otherwise, let $Z_2$ be a 0--handle corresponding to a valence-one vertex, let $Z_1$ be the 0--handle corresponding to the adjacent vertex, and let $B$ be the parting ball corresponding to the  edge between them.
	Clearly the stabilizer of $Z_2$ leaves $B$ invariant, so by Lemma~\ref{lem:cancel_parting}, we can contract the orbit of $Z_2$.
	
	The new linear parting will correspond to a smaller tree, so we can repeat this process inductively to remove all leaves, at which point the corresponding tree must be a single vertex, so the action is linear.
\end{proof}

We conclude this subsection by observing that actions on 4--dimensional 1--handlebodies need not be linearly parted, as mentioned above.

\begin{corollary}
\label{cor:not_parted_B4}
	There are $G$--actions on the 4--ball that are not linearly parted.
	Any equivariant handle-decomposition for such an action necessarily contains 2--handles.
\end{corollary}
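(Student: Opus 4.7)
The plan is to deduce both assertions from Corollary~\ref{cor:parted_Bn} (linearly parted actions on $B^n$ are linear) together with the failure of the Smith Conjecture in dimension four, which was flagged in the introduction.

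For the first assertion, I would invoke the constructions of Giffen~\cite{Gif_66_The-generalized-Smith-conjecture} and Gordon~\cite{Gor_74_On-the-higher-dimensional-Smith}, which produce $G$--actions on $B^4$ whose restrictions to $\partial B^4$ agree with a linear action but whose interiors are pairwise nonlinear. By Corollary~\ref{cor:parted_Bn}, any linearly parted action on $B^n$ must be linear, so each such nonlinear action on $B^4$ furnishes a $G$--action on the 4--ball that is not linearly parted.

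For the second assertion, I would argue contrapositively: any equivariant handle-decomposition of a 4--manifold that contains only 0--handles and 1--handles can be upgraded to a linear parting. Given such a decomposition, I would take the ball-system $\Bb$ to be the union of the co-cores $\{0\}\times D^3$ of all equivariant 1--handles. These co-cores are $G$--invariant as a set, are neatly embedded 3--balls, and the action of each 1--handle's stabilizer on the corresponding co-core is linear by Definition~\ref{def:equivariant_handle}, since an equivariant 1--handle preserves the product $D^1\times D^3$ and acts linearly in each factor. Cutting $H$ along $\Bb$, each complementary component consists of a single 0--handle $Z$ together with finitely many half 1--handles attached along 3--balls in $\partial Z$; such a component is diffeomorphic to a 4--ball. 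The restricted action on $Z$ is linear by definition of equivariant 0--handle, and the restricted action on each half 1--handle is linear as a restriction of a linear product action, so each complementary component carries its own internal linear parting. Applying Corollary~\ref{cor:parted_Bn} to each complementary 4--ball promotes its action to a linear one, and hence $\Bb$ is a linear ball-system for the ambient 1--handlebody. This contradicts the first assertion when the ambient manifold is $B^4$.

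The main obstacle I anticipate is purely bookkeeping: verifying that the complement of $\nu(\Bb)$ really decomposes into 4--balls whose internal partings satisfy the hypotheses of Corollary~\ref{cor:parted_Bn}, which in turn requires tracking how the stabilizer of a 1--handle sits inside the stabilizer of the 0--handle to which it is attached and how the two cases of the stabilizer of an equivariant 1--handle (preserving or swapping its two attaching 3--balls) interact with the cut along the co-core. Once these combinatorics are handled—which is essentially the content of equivariant handle-attachment as set up in Definition~\ref{def:equivariant_handle}—the argument reduces entirely to invoking Corollary~\ref{cor:parted_Bn} and the Giffen--Gordon construction.
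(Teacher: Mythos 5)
Your first assertion matches the paper's argument: the failure of the four-dimensional Smith Conjecture supplies nonlinear $G$--actions on $B^4$ (puncture a nonlinear action on $S^4$ at a fixed point), and Corollary~\ref{cor:parted_Bn} shows such actions cannot be linearly parted. That part is fine.

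The second assertion has a genuine gap. The claim is that any equivariant handle-decomposition of such an action contains a 2--handle, so its contrapositive concerns decompositions with \emph{no 2--handles but possibly with 3-- and 4--handles}; your argument only treats decompositions consisting exclusively of 0-- and 1--handles. (Your co-core construction is correct as far as it goes, but it is essentially the observation, already built into Definition~\ref{def:linearly_parted}, that a decomposition into linear equivariant 0-- and 1--handles \emph{is} a linear parting; it says nothing about the missing indices.) The paper closes this gap as follows: if a decomposition of $B^4$ has no 2--handles, the union of the 0-- and 1--handles is a linearly parted 1--handlebody to which only 3-- and 4--handles are attached; since handles of index at least 3 cannot change $\pi_1$ or connect components, that sublevel set must already be a 4--ball, and by the proof of Corollary~\ref{cor:parted_Bn} its 1--handles cancel equivariantly. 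One then caps off the $S^3$ boundary, turns the decomposition upside down, and repeats the argument to cancel the 3--handles, leaving only 0-- and 4--handles; such a decomposition is a single 0--handle, so the action is linear, a contradiction. Without some version of this upside-down step, your proof does not establish the statement as written.
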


\begin{proof}
	By the negative resolution of the Smith conjecture in dimension four~\cite{Gif_66_The-generalized-Smith-conjecture,Gor_74_On-the-higher-dimensional-Smith}, there exist non-linear actions on $B^4$.
	(Puncture a non-linear action on $S^4$ at a fixed point.)
	By Corollary~\ref{cor:parted_Bn}, any such action is not linearly parted.
	
	Suppose such an action has a handle-decomposition with no 2--handles.
	The union of the 0--handles and 1--handles is linearly parted, by definition, and must be a 4--ball.
	By the proof of Corollary~\ref{cor:parted_Bn}, all the 1--handles can be canceled.
	So, the action has an equivariant handle-decomposition with no 1--handles nor 2--handles.
	
	Capping-off the $S^3$ boundary,  turning the handle-decomposition upside down, and repeating the above argument removes all 3--handles.
	So, the action has an equivariant handle-decomposition with only 0--handles and 4--handles.
	Such a handle-decomposition must be a single 0--handle, hence the action is linear, a contradiction.
\end{proof}

\section{Equivariant Laudenbach-Po\'enaru}
\label{sec:ELP}

We are now ready to address our first main theorem, the proof of which depends on a handful of lemmata that are proved throughout the remainder of the section.

\begin{theorem}
\label{thm:laud_poen}
	Let $G$ be a finite group acting on $Y=\#^k(S^1\times S^2)$.
	\begin{enumerate}
		\item There exists a 4--dimensional 1--handlebody $X$ with $\partial X = Y$ such that the action of $G$ on $Y$ extends to a linearly parted action on $X$.
		\item If $X$ and $X'$ are two 4--dimensional 1--handlebodies with $\partial X = Y = \partial X'$ and both $X$ and $X'$ have linearly parted $G$--actions extending the $G$--action on $Y$, then $X$ and $X'$ are $G$--diffeomorphic rel-boundary.
	\end{enumerate}
\end{theorem}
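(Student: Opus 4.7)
The plan is to reduce both parts of the theorem to controlling $G$-equivariant sphere systems in $Y$ --- equivariant disjoint unions of essential 2-spheres whose complementary pieces are each 3-punctured 3-balls --- and then filling in or gluing equivariantly using linear actions on 3-balls and 4-balls. The principal tools are the~\ref{ELT} and~\ref{LT} from Section~\ref{subsec:dim3}, the equivariant cone characterization of linear actions on balls (Lemmata~\ref{lem:balls_as_cones} and~\ref{lem:cone_equivalence}), and the equivariant versions of Munkres' and Cerf's theorems cited in the introduction. Both (1) and (2) are proved by induction on the integer $k$, the number of $S^1\times S^2$ summands of $Y$.

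For (1), the base case $k=0$ is immediate: here $Y=S^3$, the~\ref{LT} says the $G$-action on $Y$ is linear, and the equivariant cone construction (Lemma~\ref{lem:balls_as_cones}) extends it to a linear action on $B^4$, which is trivially linearly parted. For the inductive step with $k\geq 1$, I would locate a $G$-equivariant orbit of essential 2-spheres $\Sigma\subset Y$ using standard 3-dimensional techniques adapted equivariantly via the~\ref{ELT}. Capping each sphere of $\Sigma$ equivariantly with a 3-ball (with linear induced action by the~\ref{LT}) yields a closed 3-manifold with strictly fewer $S^1 \times S^2$ summands, to which the inductive hypothesis supplies an equivariant linearly parted filling. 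Attaching an equivariant 1-handle orbit dual to $\Sigma$ then recovers the required linearly parted $X$ with $\partial X = Y$.

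For (2), the base case $Y=S^3$ reduces to comparing two linear actions on $B^4$ with identical linear boundary actions; Lemma~\ref{lem:cone_equivalence} supplies the required equivariant rel-boundary diffeomorphism directly. For $k\geq 1$, given two linearly parted fillings $X, X'$ with parting systems $\Bb, \Bb'$, the boundary sphere systems $\partial \Bb, \partial \Bb' \subset Y$ are both $G$-equivariant sphere systems cutting $Y$ into 3-punctured 3-balls. The key subclaim is that these two sphere systems can be made equivariantly isotopic in $Y$, possibly after a finite sequence of equivariant sphere slides across 1-handles. Once the sphere systems are matched, each pair of corresponding parting 3-balls is equivariantly diffeomorphic rel-boundary by the equivariant Munkres theorem together with Corollary~\ref{cor:parted_Bn}, and each pair of corresponding 0-handle 4-balls glues equivariantly via the equivariant Cerf theorem together with Lemma~\ref{lem:cone_equivalence}. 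Assembling these local equivariant diffeomorphisms yields the required global $G$-diffeomorphism $X\to X'$ rel-boundary.

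The main obstacle is the equivariant sphere-system matching central to (2): showing that any two $G$-equivariant sphere systems in $Y$ that cut $Y$ into 3-punctured 3-balls are related by a finite sequence of equivariant isotopies and equivariant sphere slides. Classically this follows from Laudenbach's theorem on sphere systems in $\#^n(S^1\times S^2)$, and I expect the equivariant analog to be proved by an iterated application of the~\ref{ELT}: equivariantly surger the two systems together by removing innermost equivariant disks of intersection, being careful at each step that the surgery respects the $G$-action and that intermediate systems remain valid sphere systems. A secondary but still nontrivial technical point is verifying that the equivariant 1-handle attachment in (1) and the equivariant 0-handle gluing in (2) preserve the linearly parted structure; this should follow routinely from the equivariant handle-bundle formalism of Definition~\ref{def:equivariant_handle} together with the handle-cancellation Lemma~\ref{lem:cancel_parting}.
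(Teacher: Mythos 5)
Your outline of part (1) is close in spirit to the paper's construction (attach equivariant $3$--handles along an invariant sphere-system, cap the resulting $S^3$ boundary components with equivariant $4$--handles), but note that the input you need there is the \emph{Equivariant Sphere Theorem} of Meeks--Yau/Dunwoody, which produces an invariant sphere-system in the closed manifold $Y$; the~\ref{ELT} only produces equivariant disks bounding curves in $\partial M$ and does not by itself yield essential invariant spheres. That is a fixable citation issue. The serious problem is in part (2).

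Your central subclaim for (2) --- that the two boundary sphere-systems $\partial\Bb$ and $\partial\Bb'$ can be made equivariantly isotopic after a finite sequence of equivariant sphere slides --- is an equivariant analog of Laudenbach's theorem on sphere systems, and it is not established here, nor does your proposed method prove it. Innermost-disk surgery (whether via the~\ref{ELT} or Lemma~\ref{lem:3d_trans}) replaces a sphere by \emph{new} spheres obtained by compression; it can make two systems disjoint, but it does not exhibit one system as an isotopy-and-slide image of the other, and equivariant slides carry their own difficulties (an entire $G$--orbit of spheres must slide simultaneously, and the slides must then be realized by equivariant diffeomorphisms of $X$ preserving the linear parting). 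The paper deliberately avoids this route: it uses the surgery argument only to produce a \emph{third} invariant sphere-system $\Ss''$ disjoint from both $\Ss$ and $\Ss'$ (Lemma~\ref{lem:innermost}), then shows --- using the cone structure of the linear $0$--handles --- that $\Ss''$ bounds linearly parting ball-systems inside \emph{both} $X$ and $X'$ disjoint from $\Bb$ and $\Bb'$ (Lemma~\ref{lem:new_ball-system}). This gives $X$ and $X'$ isomorphic equivariant handle-decompositions without ever matching the original systems, and the conclusion follows from the uniqueness of equivariant $3$-- and $4$--handle attachment (Lemma~\ref{lem:unique_handle_attachment}). To repair your argument you would either need to prove the equivariant Laudenbach-type statement (a substantial open-ended undertaking) or adopt the common-refinement strategy; as written, the gap is essential.
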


We refer to the 4--dimensional 1--handlebodies featured in Theorem~\ref{thm:laud_poen} as \emph{equivariant fillings} of the $G$--manifold $Y$.
Recall that a collection $\Ss$ of disjointly embedded two-spheres in $Y$ is called a \emph{sphere-system} for $Y$ if $Y\setminus\nu(\Ss)$ is a collection of punctured 3--spheres.

\begin{proof}
	Part (1) follows from a straight-forward application of the Equivariant Sphere Theorem of Meeks and Yau~\cite{MeeSimYau_82_Embedded-minimal}, together with Lemma~\ref{lem:unique_handle_attachment}, which establishes that equivariant 3--handles and 4--handles are determined by their attaching spheres.
	Dunwoody gave a topological proof of the Equivariant Sphere Theorem~\cite{Dun_85_An-equivariant-sphere}, and they established the following generalization; see~\cite[Theorem~4.1]{Dun_85_An-equivariant-sphere}:
	Given a $G$--action on $Y$, there exists a $G$--invariant sphere-system $\Ss$ for $Y$.
	
	Given the invariant sphere-system $\Ss$, it is easy to build the desired equivariant filling $X=\natural^k(S^1\times B^3)$.
	To do this, attach $G$--equivariant, 4--dimensional 3--handles to $Y$ along each sphere in $\Ss$.
	Let $\Bb$ denote the cores of these 3--handles.
	Since $\Ss$ is a sphere-system, the result of attaching these 3--handles is a 4--manifold with some number of 3--sphere boundary components and a boundary component diffeomorphic to $Y$.
	The induced action on these spherical boundary components is linear, so we can attach $G$--equivariant 4--dimensional 4--handles to these $3$--spheres.
	The result is an (upside-down) 4--dimensional 1--handlebody $X$ with a $G$--action extending the $G$--action on $Y$ that is linearly parted by $\Bb$, as desired.
		
	For part (2), suppose $X$ and $X'$ are the two given equivariant fillings of $Y$.
	Let $\Bb$ and $\Bb'$ denote the linearly parting ball-systems in $X$ and $X'$, respectively, and let $\Ss = \partial\Bb$ and $\Ss' = \partial\Bb'$.
	If necessary, ensure that $G$ acts orientation-preservingly on $\Bb$ and $\Bb'$ by replacing each parting ball $B$ for which the induced action is orientation-reversing with the two balls of $\partial(\nu(B))\setminus Y$.
	This process preserves the fact that $\Bb$ and $\Bb'$ are linear parting systems, since the change adds a new $4$--ball to $X\setminus \Bb$, namely $\nu(B)$, on which the action of $G$ is linear, and the change removes a small portion of each $4$--ball adjacent to $B$, preserving the fact that that $4$--ball is linear.
	Thus we may assume that $G$ acts orientation-preservingly on each sphere of $\Ss$ and $\Ss'$.
	By Lemma \ref{lem:innermost}, there exists an invariant sphere-system $\Ss''$ for $Y$ which is disjoint from both $\Ss$ and $\Ss'$.
	By Lemma \ref{lem:new_ball-system}, there exist linearly parting ball-systems $\mathcal P$ and $\mathcal P'$ bounded by $\Ss''$ in $X$ and $X'$, respectively, that are disjoint from $\Bb$ and $\Bb'$, respectively.
	Finally, by Lemma \ref{lem:unique_handle_attachment}, since $X$ and $X'$ have isomorphic handle-decompositions, $X$ and $X'$ are equivariantly diffeomorphic by a diffeomorphism which is the identity on $Y$, as desired.
\end{proof}

\begin{remark}
	In the case of a trivial group action, the proof given above is, as far as we know, not currently present in the literature. The equivariant perspective required us to use many `redundant' spheres for a sphere-system, and required us to not rely significantly on techniques involving isotopies, so this approach results in a novel non-equivariant proof with those same technical features.
\end{remark}

We now prove the lemmata referenced in the proof of Theorem~\ref{thm:laud_poen}.
Let $n\in\{2,3\}$, and let $G$ act smoothly by isometries on a $S^n$.
In other words, $G$ acts linearly.
The group $\Isom(S^n,G)$ is the group of $G$--equivariant isometries of $S^n$, and the group $\Diff(S^n, G)$ is the group of $G$--equivariant diffeomorphisms of $S^n$.
For a linear $G$--action on a ball $B^{n+1}$, any $G$--equivariant isometry of $S^n$ is a linear map of $S^n$, hence extends smoothly and equivariantly over $B^{n+1}$ by coning.

\begin{lemma}
\label{lem:unique_handle_attachment}
	Let $X$ be a smooth $4$--dimensional $G$--manifold.
	Let $X' = X\cup \frak h$, where $\frak h$ is an equivariant $3$--handle or $4$--handle.
	The equivariant diffeomorphism type of $X'$ depends only on the equivariant isotopy type of the attaching sphere of $\frak h$.
\end{lemma}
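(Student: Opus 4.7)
The plan is to reduce the statement to a combination of the equivariant isotopy extension theorem and the equivariant versions of Munkres' and Cerf's theorems cited in the introduction. Given two equivariant handles $\mathfrak{h}$ and $\mathfrak{h}'$ of the same index with equivariantly isotopic attaching spheres $S$ and $S'$, I would first apply the $G$-equivariant isotopy extension theorem (see~\cite[Ch.~VI]{Bre_72_Introduction-to-compact-transformation-groups}) to produce an equivariant self-diffeomorphism of $\partial X$ carrying $S$ to $S'$, and push this into $X$ using an invariant collar of the boundary. After composing with this diffeomorphism, we may assume $S=S'$ on the nose, reducing the problem to comparing two attachments of the same handle bundle along the same invariant attaching sphere (and hence along its $G$-orbit).

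For the 4-handle case, the attaching region is the entire sphere $\partial\mathfrak{h}\cong S^3$, on which the stabilizer $H=\Stab_G(\mathfrak{h})$ acts linearly. Two choices of equivariant attaching map differ by an element $\phi\in\Diff(S^3,H)$. I would invoke Mecchia and Seppi's equivariant Cerf theorem~\cite{MecSep_19_Isometry-groups-and-mapping} to isotope $\phi$ through $H$-equivariant diffeomorphisms to a linear isometry $\psi\in\Isom(S^3,H)$, and then realize this isotopy by an equivariant diffeomorphism supported in a collar of $S$ inside $X$; this reduces the comparison to the case where the gluing is a linear isometry. Once the gluing is linear, it extends to an $H$-equivariant diffeomorphism of $\mathfrak{h}\cong D^4$ by coning (using Lemma~\ref{lem:balls_as_cones} and the discussion in Section~\ref{subsec:linear}), producing the required equivariant diffeomorphism on a single representative handle, which we then transport equivariantly across the $G$-orbit.

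For the 3-handle case, I would fix the attaching sphere $S\cong S^2$ and use the equivariant tubular neighborhood theorem~\cite[Ch.~VI]{Bre_72_Introduction-to-compact-transformation-groups} to identify an invariant tubular neighborhood of $S$ in $\partial X$ with the standard model $S^2\times D^1$ (with linear $H$-action on $S^2$ and induced linear action on the $D^1$ factor coming from the linearity of the handle). The residual framing ambiguity then lies in $\Diff(S^2,H)$, which deformation retracts onto $\Isom(S^2,H)$ by the equivariant Munkres theorem of Dinkelbach and Leeb~\cite[Corollary~2.6]{DinLee_09_Equivariant_Ricci_flow}. The argument proceeds as in the 4-handle case: isotope the gluing to be a linear isometry via an equivariant collar diffeomorphism of $X$, extend linearly across the $D^3$ factor of $\mathfrak{h}$ by coning, and propagate the extension equivariantly across the $G$-orbit of the handle.

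The hard part will be bookkeeping with stabilizers and orbits: one must ensure that the isotopy extension carrying $S$ to $S'$, the equivariant tubular neighborhood identification, and the coned extension across a single representative handle all combine into a globally well-defined $G$-equivariant diffeomorphism. I expect this to amount to choosing all extensions on one representative handle in each $G$-orbit and then transporting by the $G$-action, with the verification that the pieces glue to a diffeomorphism of $X'$ following from having arranged every gluing to be linear before coning, so that the coned extensions agree on intersections with $\partial X$ and with neighboring orbit elements.
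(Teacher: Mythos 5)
Your proposal is correct and follows essentially the same route as the paper's proof: reduce to comparing attaching maps, use Dinkelbach--Leeb for $S^2$ and Mecchia--Seppi for $S^3$ to isotope the discrepancy to a linear map, and extend over the handle by coning. The paper's version is simply terser, leaving the isotopy-extension reduction, the framing discussion for the $3$--handle, and the orbit bookkeeping implicit.
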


\begin{proof}
	Proposition 2.1 of \cite{DinLee_09_Equivariant_Ricci_flow} (in particular, its Corollary, 2.6) shows that for any $G$--action on $S^2$, the natural inclusion $\Isom(S^2,G)\to \Diff(S^2,G)$ is a homotopy equivalence.
	In particular, any $G$--equivariant diffeomorphism of $S^2$ is equivariantly isotopic to a $G$--equivariant linear map of $S^2$, so any two attaching maps for a linear $3$--handle extend over the core $3$--ball, hence give equivariantly diffeomorphic results.

	The main theorem of \cite{MecSep_19_Isometry_groups} shows that for any $G$--action on $S^3$, the natural inclusion $\Isom(S^3,G)\to \Diff(S^3,G)$ is a $\pi_0$--isomorphism, i.e. any $G$--equivariant diffeomorphism of $S^3$ is, as above, isotopic to a $G$--equivariant linear map of $S^3$ and so any two attaching maps for a linear $4$--handle extend over that $4$--handle and hence give equivariantly diffeomorphic results.
\end{proof}

Note that Lemma~\ref{lem:unique_handle_attachment} establishes the special case of Theorem~\ref{thm:laud_poen}(2) in which $X$ is $B^4$ with a linear action.
See also \cite[Corollary~1.5]{ChoLi_24_Equivariant-3-manifolds-with} for the result that the natural inclusion $\Isom(S^3,G)\to \Diff(S^3,G)$ is a homotopy equivalence.

To prove Theorem~\ref{thm:equiv_patch}, we will also need the analogue of Lemma~\ref{lem:unique_handle_attachment} for equivariant $2$--handles, which we can also obtain from the results cited in Lemma~\ref{lem:unique_handle_attachment}.

\begin{lemma}
\label{lem:2_handle_attachment}
	Let $X$ be a smooth $4$--dimensional $G$--manifold.
	Let $X' = X\cup \frak h$, where $\frak h$ is an equivariant $2$--handle.
	The equivariant diffeomorphism type of $X'$ depends only on the equivariant isotopy type of the attaching map $f\colon \partial D^{2}\times D^2\to \partial X$.
\end{lemma}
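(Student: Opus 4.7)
The plan is to follow the pattern of Lemma~\ref{lem:unique_handle_attachment}: convert the given equivariant isotopy of attaching maps into a $G$--equivariant ambient isotopy of $\partial X$, damp that isotopy across a collar of $\partial X$, and glue the result to the identity on $\frak h$. A key simplification relative to the $3$--handle and $4$--handle cases is that the hypothesis already supplies an equivariant isotopy of attaching \emph{maps} rather than merely of attaching \emph{images}. Consequently we do not need any Cerf-type statement about the connectedness of $\Diff(\partial D^2\times D^2, H)$, where $H = \Stab_G(\frak h)$; that step is replaced directly by the hypothesis.

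In detail, let $\{f_t\}_{t\in[0,1]}$ be a smooth family of $H$--equivariant embeddings $\partial D^2\times D^2\to\partial X$ interpolating between the two given attaching maps $f_0$ and $f_1$; by taking $G$--translates, we view this as a $G$--equivariant isotopy of the orbit of the image in $\partial X$. The equivariant isotopy extension theorem for finite groups acting on a compact manifold (see, e.g.,~\cite[Chapter~VI]{Bre_72_Introduction-to-compact-transformation-groups}) promotes this to a $G$--equivariant ambient isotopy $\Phi_t\colon\partial X\to\partial X$ with $\Phi_0 = \Id$ and $\Phi_1\circ f_0 = f_1$. Choosing a $G$--invariant collar $c\colon\partial X\times[0,1]\hookrightarrow X$ via the equivariant collar neighborhood theorem, I would extend $\Phi_1$ to a $G$--equivariant self-diffeomorphism $\Psi$ of $X$ by setting $\Psi(c(x,t)) = c(\Phi_{\lambda(t)}(x), t)$ for a smooth cutoff $\lambda\colon[0,1]\to[0,1]$ with $\lambda(0)=1$ and $\lambda$ vanishing near $t=1$, and $\Psi = \Id$ outside the collar. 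Then the desired $G$--equivariant diffeomorphism $X\cup_{f_0}\frak h\to X\cup_{f_1}\frak h$ is defined as $\Psi$ on $X$ and the identity on $\frak h$; these agree on the attaching region by construction, since $\Psi\vert_{\partial X}\circ f_0 = f_1$ on one component, and hence, by $G$--equivariance, on the full orbit.

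The only nontrivial technical ingredient is the equivariant isotopy extension theorem itself. For finite groups it reduces to the non-equivariant version in a standard way: the isotopy $\{f_t\}$ determines a time-dependent vector field along the orbit of the image, which is automatically $G$--equivariant; extending it to a $G$--equivariant vector field on $\partial X$ via the equivariant tubular neighborhood theorem together with a $G$--invariant partition of unity, and then integrating, produces the ambient isotopy. I expect this to be the main (though still routine) point requiring care in a fully written-up proof.
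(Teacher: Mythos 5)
Your proof is correct for the lemma as stated, but it takes a genuinely lighter route than the paper's. Both arguments begin identically: apply the equivariant isotopy extension theorem to get a $G$--equivariant ambient diffeomorphism $\phi$ of $\partial X$ with $\phi\circ f_0=f_1$, then damp the isotopy across an invariant collar to extend over $X$. The divergence is on the handle side. The paper extends the induced diffeomorphism of the attaching region $\partial D^2\times D^2$ first over all of $\partial\frak h\cong S^3$ (via the~\ref{ELT} and \cite[Proposition~2.1]{DinLee_09_Equivariant_Ricci_flow}) and then over $\frak h\cong B^4$ (via \cite{MecSep_19_Isometry-groups-and-mapping}), whereas you simply glue the identity on $\frak h$, observing that since the hypothesis is an equivariant isotopy of attaching \emph{maps}, the induced self-map $f_1^{-1}\circ\phi\circ f_0$ of $\partial D^2\times D^2$ is already the identity. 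That observation is sound, and it makes your proof elementary modulo isotopy extension, avoiding all three deep $3$--dimensional inputs. What the paper's heavier argument buys is a de facto stronger statement: it shows that an arbitrary equivariant self-diffeomorphism of the attaching region $\partial D^2\times D^2$ extends across the $2$--handle, which is the form actually exploited later (e.g.\ in Lemma~\ref{lem:disks_in_ball_diffeomorphic}, where the two attaching maps are only matched up to an equivariant self-diffeomorphism of $\partial D^2\times D^2$ coming from equating framings, not by an isotopy of parametrized maps). So your proof suffices for the literal statement, but if you intend to invoke the lemma in situations where only the attaching regions and framings agree, you would need to restore the handle-extension step, and with it the connectivity results for $\Diff(S^2,G)$ and $\Diff(S^3,G)$.
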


\begin{proof}
	For $i=1,2$, let $f_i\colon\partial D^2\times D^2\to \partial X$ be the attaching map of an equivariant 2--handle $\frak h_i$.
	Assume $f_1$ and $f_2$ are $G$--equivariantly isotopic.
	By the Equivariant Isotopy Extension Theorem (see~\cite[Remark, p. 312]{Bre_72_Introduction-to-compact-transformation-groups}), there is a $G$--equivariant diffeomorphism $\phi\colon\partial X\to\partial X$ such that $f_2 = \phi\circ f_1$.
	We can extend $\phi$ from $\partial D^2\times D^2$ to all of $\partial \frak h_1\cong S^3$ by the~\ref{ELT} and~\cite[Proposition~2.1]{DinLee_09_Equivariant_Ricci_flow}.
	Then, we can extend $\phi$ across $\frak h_1$ by the main theorem of~\cite{MecSep_19_Isometry-groups-and-mapping}.
	Since $\phi$ is $G$--equivariantly isotopic to the identity, it extends across $X$ to give a $G$--equivariant diffeomorphism $X\cup\frak h_1\to X\cup\frak h_2$, as desired.
\end{proof}

Central to our approach is the ability to make invariant sphere-systems transverse equivariantly, which we now establish.

\begin{lemma}
\label{lem:3d_trans}
	Let $Y$ be a closed, 3--dimensional $G$--manifold.
	Let $N$ and $M$ be 2--dimensional invariant submanifolds of $Y$ such that the induced action of $G$ on $N$ is orientation-preserving.
	Then $N$ and $M$ can be made transverse by an arbitrarily small equivariant isotopy. 
\end{lemma}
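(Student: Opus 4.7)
The plan is to exploit the orientation-preserving hypothesis to equivariantly trivialize the normal line bundle $\nu(N)$, and then to achieve transversality by performing a generic parallel shift of $N$ in this normal direction, relying on Sard's theorem applied to a $G$-invariant height function.

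First, I would show that $\nu(N)$ is equivariantly trivial as a $G$-line bundle over $N$. Since $G$ acts orientation-preservingly on $Y$ and on $N$, it preserves the induced orientation on $\nu(N)$; in particular, each point-stabilizer $G_p$ with $p\in N$ acts trivially on the one-dimensional fiber $\nu_pN$, because any orientation-preserving action of a finite group on $\R$ is trivial. Averaging to obtain a $G$-invariant Riemannian metric on $Y$, the positive unit normal field becomes a nowhere-vanishing $G$-equivariant section of $\nu(N)$, yielding an equivariant isomorphism $\nu(N)\cong N\times\R$ with trivial action on the second factor. The equivariant tubular neighborhood theorem then supplies a $G$-invariant open neighborhood $U$ of $N$ in $Y$ together with a $G$-equivariant diffeomorphism $U\cong N\times(-\epsilon,\epsilon)$ realizing this product structure.

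Second, I would produce the transverse perturbation. Let $f\colon U\to\R$ denote projection onto the second coordinate, a $G$-invariant submersion with $N=f^{-1}(0)$. Applying Sard's theorem to the restriction $f|_{M\cap U}$ yields arbitrarily small regular values $-c\in(-\epsilon,\epsilon)$; for any such $-c$, set $N':=f^{-1}(-c)$. Then $N'$ is $G$-invariant (since $f$ is $G$-invariant and $G$ acts trivially on $\R$), and the regular-value condition translates to $df_p|_{T_pM}\neq 0$ at every $p\in N'\cap M$, giving $T_pN'+T_pM=\ker(df_p)+T_pM=T_pY$, i.e., $N'\pitchfork M$. The desired $G$-equivariant ambient isotopy carrying $N$ to $N'$ is realized inside $U$ by the formula $(x,s)\mapsto(x,s-tc\,\chi(s))$, where $\chi\colon(-\epsilon,\epsilon)\to[0,1]$ is a bump function with $\chi(0)=1$ supported away from $\pm\epsilon$, extended by the identity outside $U$.

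The principal obstacle, and the critical place where the hypothesis enters, is the equivariant triviality of $\nu(N)$ in the first step. Without orientation-preservation on $N$, the point-stabilizers could act as $-1$ on normal fibers, and $\nu(N)$ would admit no nowhere-vanishing equivariant section; one would then be forced to work separately on the fixed-point locus of this induced sign action, substantially complicating the general position argument. Once equivariant triviality is secured, however, the remainder is a direct application of Sard's theorem to a $G$-invariant function, with no further equivariant subtleties.
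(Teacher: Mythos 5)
Your proof is correct, and it takes a genuinely different route from the paper's. The paper works downstairs: it first uses the orientation hypotheses to show that local stabilizers of points of $N$ act cyclically with the singular set meeting $N$ in transverse arcs, then passes to the orbifold quotient $Y/G$, performs the isotopy there after a case analysis on whether the local stabilizer acts orientation-preservingly or -reversingly on $M$, and finally lifts the isotopy back to $Y$ via Schwarz's lifting theorem. You instead stay upstairs: the same orientation observations (stabilizers act as $+1$ on normal fibers of $N$) give an equivariant trivialization $\nu(N)\cong N\times\R$ with trivial action on the fiber, and then transversality is a one-shot parametric Sard argument applied to the invariant projection restricted to $M$. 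Your route avoids the quotient orbifold, the lifting-of-isotopies machinery, and any case analysis on how $M$ meets the singular set, since $M$ is never moved and the level sets $N\times\{-c\}$ are automatically invariant; the only mild point to make explicit is that the hypothesis forces $N$ to be orientable (and hence co-orientable in the orientable $Y$), which is what makes the ``positive unit normal'' well-defined. What the paper's argument buys in exchange is an explicit description of the local structure of $\Sing(Y)$ along $N$, which is in the spirit of the orbifold techniques used elsewhere in the paper, but for the statement of this lemma your argument is more direct and equally rigorous.
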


\begin{proof}
	We first characterize the singular set $\Sing(Y)$ of the action of $G$ on $Y$ and its intersection with $N$.
	Let $p$ be a point of $\Sing(Y)\cap N$ with stabilizer $G_p$.
	Let $\nu(p)$ be an invariant neighborhood of $p$ intersecting $N$ in a disk, which is equivariantly diffeomorphic to the unit ball $B^3\subset\R^3$ via a $G_p$--representation $\rho\colon G_p\to O(3)$.
	Under this identification, $N$ corresponds to an invariant 2--dimensional subspace in $B^3$, so $\rho$ is reducible and splits as the direct sum $\rho_2\oplus\rho_1$ of a 2--dimensional real representation and a 1--dimensional real representation.
	If $\rho_2(G_p)$ contains a reflection, then the restriction of $G$ to $N$ is not orientation-preserving, a contradiction; so $\rho_2(G_p)$ is cyclic.
	In this case, if $\rho_1(G_p)$ is non-trivial, then $G_p$ (hence $G$) is not orientation-preserving on $Y$ at $p$, another contradiction.
	It follows that $\rho_1(G_p)$ is trivial and thus $\rho(G_p)$ is cyclic.

	Geometrically, this means that $\Sing(Y)\cap \nu(p)$ is an arc transverse to $N$ at $p$.
	Suppose that $N$ intersects $M$ at a point $p$ of $\Sing(Y)$.
	We have two cases for such a point $p$: first, if the action of $G_p$ on $M$ is orientation-reversing, then the arc $\Sing(Y)\cap \nu(p)$ must be embedded in $M$ and transverse to $N$, and therefore $N$ is transverse to $M$ at $p$.
	
	In the other case, the action of $G_p$ on $M$ is orientation-preserving, so by the same analysis as above $\Sing(Y)$ is transverse to $M$ at $p$ in addition to $N$.
	Passing to the orbifold $Y/G$, we see that, at the quotient $p^*$ of $p$, both $N/G$ and $M/G$ are transverse to the singular set $\Sing(Y)/G$, which is a $1$--manifold near $p^*$.
	We can now isotope $N/G$ to be disjoint from $M/G$ on $\Sing(Y)/G$ via an arbitrarily small isotopy that is a vertical translation of $N/G$ along the arc of $\Sing(Y)/G$ passing through $p^*$.
	This isotopy lifts to an arbitrarily small equivariant isotopy of $N$ in $Y$ by~\cite[Corollary~2.4]{Sch_80_Lifting-smooth-homotopies}.
	
	Finally, if $N$ and $M$ intersect away from the singular set of the $G$--action on $N$, we can apply an arbitrarily small isotopy that is supported away from $\Sing(Y)/G$ to $N/G$ so that it meets $M/G$ transversally.
	Since this isotopy is the identity near $\Sing(Y)/G$, it lifts to an equivariant isotopy in $Y$, after which $N$ and $M$ intersect transversally as desired.
\end{proof}

Next, we leverage 3--dimensional equivariant transversality to prove a lemma regarding equivariant sphere-systems.

\begin{lemma}
\label{lem:innermost}
	Let $G$ act on $Y=\#^k(S^1\times S^2)$, and suppose $\Ss$ and $\Ss'$ are two $G$--invariant sphere-systems for $Y$ so that $G$ acts orientation-preservingly on each sphere of $\Ss$ and $\Ss'$.
	There exits a $G$--invariant sphere-system $\Ss''$ that is disjoint from both $\Ss$ and $\Ss'$ so that $G$ acts orientation-preservingly on each component of $\Ss''$.
\end{lemma}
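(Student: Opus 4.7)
My plan is to reduce to the disjoint case by a sequence of $G$-equivariant sphere surgeries on $\Ss$ along innermost disks of $\Ss'$, and then take $\Ss''$ to be a small equivariant normal pushoff of the result. To set up, I will apply Lemma~\ref{lem:3d_trans}---whose hypothesis is met by the orientation-preservation assumption on $\Ss$---to equivariantly isotope $\Ss$ so as to be transverse to $\Ss'$. Assuming $\Ss$ and $\Ss'$ share no common sphere (shared spheres can be set aside and re-included at the end), the intersection $\Ss\cap\Ss'$ is then a $G$-invariant $1$-submanifold of $Y$ consisting of disjointly embedded circles.

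Next, I will induct on the number of components of $\Ss\cap\Ss'$ to produce a $G$-invariant sphere-system $\tilde\Ss$ disjoint from $\Ss'$, on which $G$ still acts orientation-preservingly. For the inductive step, I pick a region of $\Ss'\setminus(\Ss\cap\Ss')$ that is a disk $D$, so that $\partial D=C\subset\Ss\cap\Ss'$ and $\text{int}(D)\cap\Ss=\emptyset$. The $G$-orbit $G\cdot D$ is pairwise disjoint: innermost disks on any single sphere of $\Ss'$ are disjoint complementary regions, and disks on distinct spheres of $\Ss'$ are automatically disjoint. I then perform equivariant sphere surgery on $\Ss$ along $G\cdot D$: for each $D\in G\cdot D$ with $C=\partial D$ on some $S\in\Ss$, cut $S$ along $C$ into two disks $D_1,D_2$ and replace $S$ with the two spheres $D_1\cup D'$ and $D_2\cup D''$, where $D',D''$ are parallel pushoffs of $D$ to the two sides of $\Ss'$. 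After equivariantly discarding trivial (ball-bounding) and redundant parallel components, the resulting $G$-invariant collection $\tilde\Ss$ remains a sphere-system by the standard sphere-surgery argument applied $G$-equivariantly. The stabilizer of each new sphere is contained in $\Stab_G(S)\cap\Stab_G(S')$ and hence acts orientation-preservingly by hypothesis. The complexity strictly decreases, since $G\cdot C$ is eliminated and the small pushoffs $D',D''$ introduce no new intersections with $\Ss'$, so the induction closes.

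Finally, to obtain $\Ss''$ disjoint from both $\Ss$ and $\Ss'$, I will push $\tilde\Ss$ slightly off $\Ss$ via an equivariant isotopy in the normal direction of $\tilde\Ss$ in $Y$. Since $G$ acts orientation-preservingly on both $\tilde\Ss$ and the orientable manifold $Y$, the normal line bundle of $\tilde\Ss$ in $Y$ admits a $G$-equivariant nonvanishing section, which generates such a pushoff. Choosing the displacement sufficiently small keeps $\Ss''$ disjoint from $\Ss'$ (as $\tilde\Ss\cap\Ss'=\emptyset$); on the pieces $D_i\subset S$ of $\tilde\Ss$ the normal to $\tilde\Ss$ agrees with the normal to $\Ss$, so the pushoff detaches $\Ss''$ from $\Ss$ there, while elsewhere $\tilde\Ss$ is already off $\Ss$ (the pushoffs $D',D''$ having interiors close to the innermost $D$, whose interior is disjoint from $\Ss$). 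The main technical point I expect to require care is verifying that the orbit surgery along $G\cdot D$ yields an embedded $G$-invariant sphere-system on which $G$ acts orientation-preservingly; this rests on the stabilizer calculation together with a $G$-equivariant adaptation of the standard sphere-surgery argument.
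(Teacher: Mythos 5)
Your proof is correct and follows essentially the same strategy as the paper's: equivariant transversality via Lemma~\ref{lem:3d_trans}, followed by iterated surgery along $G$--orbits of innermost disks, with the sphere-system property preserved by the standard punctured--$3$--sphere complement argument and orientation-preservation inherited from the stabilizers of the original spheres. The only difference is organizational---you surger $\Ss$ along innermost disks of $\Ss'$ and finish with a small equivariant pushoff, whereas the paper surgers $\Ss'$ along innermost disks of $\Ss$ and removes the residual intersections with $\Ss'$ by a further round of surgery (itself remarking that a pushoff via invariant tubular neighborhoods would also suffice).
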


\begin{proof}
	Because $G$ acts orientation-preservingly on $\Ss$, we can apply Lemma~\ref{lem:3d_trans} to equivariantly isotope $\Ss'$ to be transverse to $\Ss$; let $\Cc = \Ss\cap\Ss'$ be the resulting disjoint union of simple closed curves.
	Choose a curve $\omega\subset\Cc$ such that $\omega = \partial D$ for some disk $D\subset\Ss$ with the property that $\Int(D)\cap\Ss' = \varnothing$.
	Let $\Dd$ denote the orbit of $D$ under the action of $G$, so $\Dd$ is an invariant disjoint union of innermost disks in $\Ss$.
	Using an equivariant tubular neighborhood of $\Dd$, surger $\Ss'$ along $\Dd$ to get a new sphere-system $\Ss''$.
	
	To check that $\Ss''$ is a sphere-system, note that $E'=Y\setminus\nu(\Ss')$ is a disjoint union of punctured 3--spheres, and $\Dd$ is a disjoint union of neatly embedded disks in $E'$.
	To get $E'' = Y\setminus\nu(\Ss'')$ from $E'$, we remove $\nu(\Dd)$ from some punctured 3--spheres in $E'$, then glue $\nu(\Dd)$ to (potentially different) punctured 3--spheres in $E'$ along the boundaries of disks in $\Dd$.
	See Figure~\ref{fig:surgery_lemma}(\textsc{a}):
	Removing $\nu(\Dd)$ from $E'$ results in a disjoint union of punctured 3--spheres, since a neatly embedded disk in a 3--ball always cuts out two 3--balls.
	Gluing $\nu(\Dd)$ back in has the effect of attaching an equivariant collection of $2$--handles to $E'$.
	Given that a collection of punctured $3$--balls is equivalently a 3--dimensional 2--handlebody, this results in a collection of punctured 3--spheres again.
	Hence $E''$ is a collection of punctured $3$--spheres, and thus $\Ss''$ is a sphere-system for $Y$.
	Note that $G$ acts orientation-preservingly on the spheres of $\Ss''$, since $G$ acts orientation-preservingly on the spheres of $\Ss'$ and the invariant disks of $\Dd\subset\Ss$.
	
\begin{figure}[ht!]
	\centering
	\includegraphics[width = .9\linewidth]{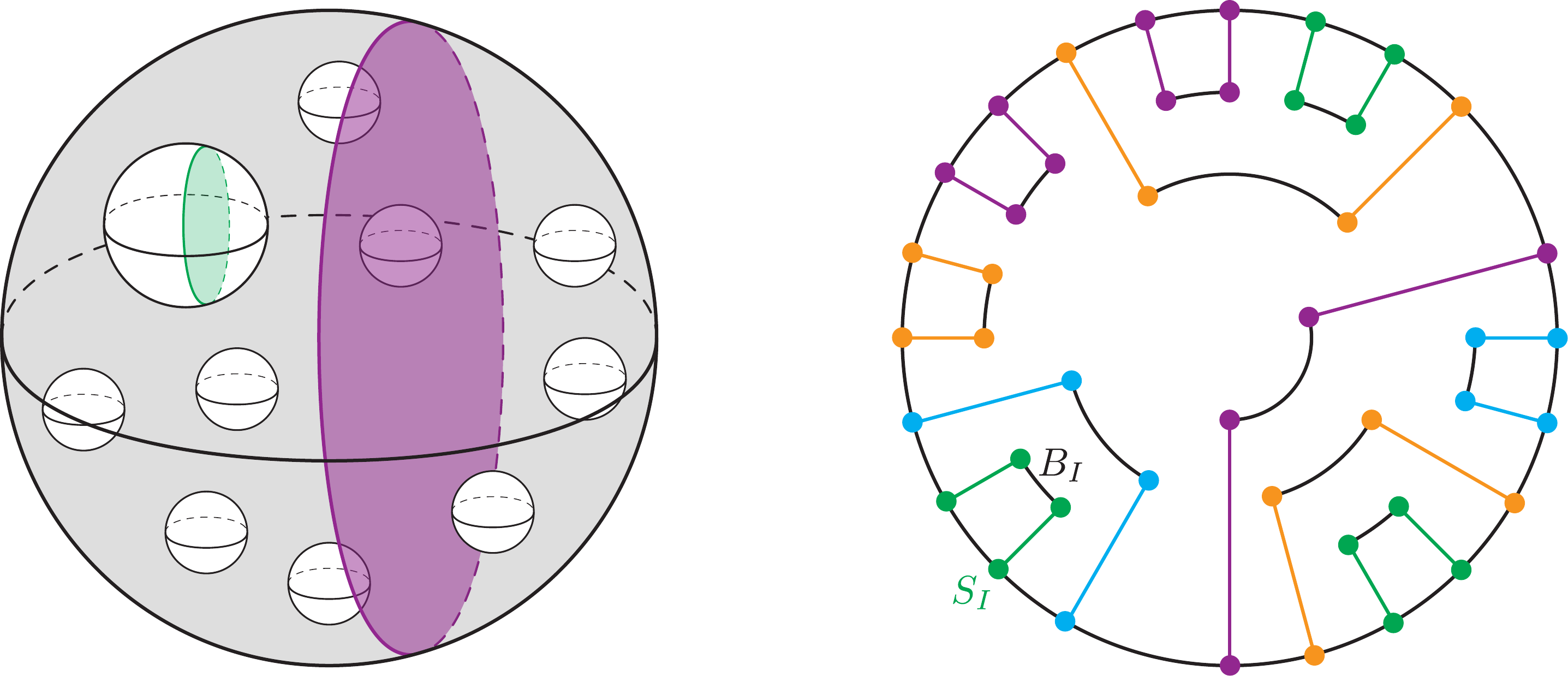}
    \caption{Left: Removing the purple disk on the right from this punctured $3$--ball splits it into two punctured $3$--balls, and attaching the green disk on the left splits the puncture in two, so that the left-hand punctured $3$--ball turns into another punctured $3$--ball with one more puncture, as in Lemma~\ref{lem:innermost}.
    Right: A schematic for the tube-and-cap approach in Lemma~\ref{lem:new_ball-system}, with an initial choice of innermost sphere $S_I$ and capping 3--ball $B_I$ indicated.}
    \label{fig:surgery_lemma}
\end{figure}

	Now, apply Lemma~\ref{lem:3d_trans} again to obtain a sufficiently small equivariant isotopy to make $\Ss''$ transverse to the original sphere-system $\Ss'$ while preserving the number of curves of intersection with $\Ss$.
	Repeatedly applying the surgery argument to $\Cc' = \Ss'\cap\Ss''$, we reduce the cardinality of $\Cc'$.
	Since $|\Cc'|$ is finite, this process terminates in a sphere-system (still denoted $\Ss''$) that is disjoint from $\Ss'$ and that still intersects $\Ss$ in a set of curves equivalent to $\Cc\setminus(G\cdot \omega)$, as desired.

	Note that $|\Ss''\cap\Ss| < |\Ss'\cap\Ss|$.
	Iterating this entire process results in a sequence of sphere-systems (still denoted $\Ss''$), each of which is disjoint from $\Ss'$ and intersects $\Ss$ in fewer curves.
	As above, since $|\Cc|$ was finite, this sequence terminates in a sphere-system $\Ss''$ that is disjoint from $\Ss$ and $\Ss'$, and the $G$--action on $\Ss''$ is orientation-preserving, as desired.
\end{proof}

We remark that in the above proof, instead of repeating the surgery argument to get $\Ss''$ disjoint from $\Ss'$, we could use invariant tubular neighborhoods to push $\Ss''$ off of $\Ss'$.

The next lemma concerns extending sphere-systems to ball-systems and shows that two disjoint sphere-systems for $Y$ detemine the same linearly parted handlebody upon equivariant handle-attachment.

\begin{lemma}
\label{lem:new_ball-system}
	Let $G$ act on $X=\natural^k(S^1\times B^3)$, and let $\Bb$ be a linearly parting ball-system for $X$.
	Let $\Ss''$ be a $G$--invariant sphere-system for $\partial X$ that is disjoint from $\partial\Bb$.
	There is a linearly parting ball-system $\Bb''$ for $X$ such that $\Bb''$ is disjoint from $\Bb$ and $\partial\Bb'' = \Ss''$.
\end{lemma}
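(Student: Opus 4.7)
The plan is to produce $\Bb''$ by constructing, inside each $4$-ball component $Z_i$ of $X\setminus\nu(\Bb)$, an equivariant $3$-ball for each sphere of $\Ss''\cap\partial Z_i$, organized recursively by the nesting structure of the sphere system, and then to invoke Corollary~\ref{cor:parted_Bn} globally to conclude that the resulting ball-system is a linear parting of $X$.

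Write $X\setminus\nu(\Bb)=\bigsqcup_i Z_i$, each a linear $H_i$-ball with $H_i=\Stab_G(Z_i)$, and set $\Ss_i''=\Ss''\cap\partial Z_i$, an $H_i$-equivariant disjoint collection of $2$-spheres in $\partial Z_i=S^3$. After preprocessing $\Ss''$ (as in the proof of Theorem~\ref{thm:laud_poen}(2)) to eliminate spheres with side-swapping stabilizers, the dual tree of $\Ss_i''$ in $\partial Z_i$ admits an $H_i$-invariant vertex (a ``root region'' $R_0$), which supplies a consistent $H_i$-equivariant choice of ``inside'' $3$-ball $D_S\subset\partial Z_i$ for each $S\in\Ss_i''$, namely the component of $\partial Z_i\setminus S$ not containing $R_0$.

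Identifying $Z_i$ with the unit ball of an orthogonal $H_i$-representation, use the $H_i$-equivariant radial collar $c(x,t)=(1-t)x$ of $\partial Z_i$ in $Z_i$. For each outermost $H_i$-orbit $H_i\cdot S$ of $\Ss_i''$, push the orbit of $D_S$ inward to some depth $\epsilon$, producing the neatly embedded $3$-ball
\[
B_{S'} = \{(1-\epsilon)y:y\in D_{S'}\}\cup\{(1-t)y:y\in\partial D_{S'},\,t\in[0,\epsilon]\}
\]
with $\partial B_{S'}=S'$ and a linear $H_{S'}$-action by the~\ref{LT} in dimension $3$. Cutting $Z_i$ along this orbit yields outer caps $D_{S'}\times[0,\epsilon]$ (product-linear) and a central piece that is star-shaped from the $H_i$-fixed origin $O$, hence equivariantly diffeomorphic to $Z_i$ by Lemma~\ref{lem:cone_equivalence} and therefore linear. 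Recurse within each outer cap, regarding the cap itself as a linear $4$-ball with its own equivariant cone point and radial collar; the next round of push-ins produces linear inner caps and a complementary piece that is star-shaped from the cap's cone point, again linear by Lemma~\ref{lem:cone_equivalence}. Iterating through all nesting levels yields $\Bb_i''\subset Z_i$; let $\Bb'':=\bigsqcup_i\Bb_i''$. By construction $\Bb''$ is $G$-equivariant, disjoint from $\Bb$ (every $B_{S'}$ lies in $\Int(Z_i)$), and $\partial\Bb''=\Ss''$.

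It remains to see that $\Bb''$ is a \emph{linear} parting. Since $\Ss''$ is a sphere-system for $\partial X=\#^k(S^1\times S^2)$, each component $P$ of $X\setminus\nu(\Bb'')$ is a simply-connected $1$-handlebody, hence a $4$-ball. The linear pieces of $\bigsqcup_i Z_i\setminus\nu(\Bb_i'')$ sitting in $P$, together with the parting balls of $\Bb$ inside $P$ (linear by hypothesis), endow $P$ with a linear parting structure for its $\Stab_G(P)$-action, so Corollary~\ref{cor:parted_Bn} gives that the action on $P$ is linear and therefore that $\Bb''$ is a linearly parting ball-system for $X$.

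The main obstacle is ensuring that every piece of $Z_i\setminus\nu(\Bb_i'')$ carries a linear action, not merely a smooth one on a topological $4$-ball; this is not automatic in dimension $4$, and a naive construction (e.g.\ a single global collar with varying depths) can leave ``intermediate'' pieces between two nested push-ins whose actions are only controlled topologically. The recursive use of a radial collar emanating from an equivariant cone point inside the ball at each level circumvents this: removing radial slabs from a linear ball preserves star-shapedness from the cone point, so Lemma~\ref{lem:cone_equivalence} inductively supplies linearity of the complementary piece at each stage. The orientation-preserving preprocessing is the secondary technical ingredient, handling the equivariant side-choice needed to define the $D_S$'s consistently.
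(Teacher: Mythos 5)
Your proof is correct and follows essentially the same route as the paper's: decompose $X\setminus\nu(\Bb)$ into linear $4$--balls, use the equivariant cone/radial structure to push the $3$--balls bounded by $\Ss''$ into each piece (the paper works innermost-out, capping an innermost orbit and tubing the remaining spheres down the depth function, while you recurse outermost-in from a root region, but the resulting pieces --- a star-shaped central ball and product-like caps, with linearity of the $3$--dimensional balls supplied by the~\ref{LT} --- are the same), and then conclude global linearity from the sphere-system property of $\Ss''$ together with Corollary~\ref{cor:parted_Bn}. One small caution: your ``preprocessing'' of $\Ss''$ to eliminate side-swapping stabilizers would replace some spheres by parallel copies and hence break the required equality $\partial\Bb''=\Ss''$; it is unnecessary, since in the only application the input $\Ss''$ comes from Lemma~\ref{lem:innermost} and the action on each of its spheres is already orientation-preserving, which is the fact both your argument and the paper's implicitly use to choose the balls $D_S$ equivariantly.
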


\begin{proof}
	Note that $X\setminus\nu(\Bb)$ is a disjoint union of 4--balls on which $G$ acts linearly, and $\Ss''$ is a $G$--invariant disjoint union of two-spheres in the disjoint union of 3--spheres $\partial(X\setminus\nu(\Bb))$.
	Let $Z$ be one of these 4--balls, let $\Ss''_Z$ denote those spheres of $\Ss''$ lying in $\partial Z$, and let $G_Z$ be the stabilizer of $Z$.
	There are finitely many spheres in $\Ss''_Z$, so there is an innermost sphere $S_I$ that bounds a 3--ball $B_I$ disjoint from the other spheres of $\Ss''_Z$.
	Note first that since $G$ is orientation-preserving, any element of $G$ that takes $S_I$ to itself must take $B_I$ to itself, rather than the other ball bounded by $S_I$. 
	Thus $G_Z\cdot B_I$ is an invariant, disjoint collection of $3$--balls bounded by $G_Z\cdot S_I$, and $G_Z\cdot B_I$ is disjoint from all other spheres of $\Ss''_Z$.
	Since the $G_Z$--action is a cone, we can cap off $G_Z\cdot S_I$ with $G_Z\cdot B_I$ and tube the spheres of $\Ss''_Z\setminus (G_Z\cdot S_I)$ down the depth function.
	See Figure~\ref{fig:surgery_lemma}(\textsc{b}).
	Repeat this process finitely many times for all the $G_Z$--orbits in $\Ss''_Z$ to obtain an invariant, disjoint collection $\Bb''_Z$ of $3$--balls in $Z$ bounded by $\Ss''_Z$.

	Each component of $Z\setminus \Bb''_Z$ is a linear $4$--ball: This is clear for the component containing the center of $Z$, since it has an equivariant Morse function with a single critical point\footnote{Also, this component is a starlike sub-ball of $Z$, as in the end of Section~\ref{sec:background}.}.
	For any other component $C$, if we let $B\subseteq\partial C$ be the ball of $\Bb''_Z$ of lowest height in $\partial C$, then $C$ is the flow of $B$ up the cone height function to $\partial Z$ minus carvings from the boundary corresponding to spheres contained within this flow on $\partial Z$.
	The action on $C$ is linear since it is diffeomorphic to carvings from $B\times[0,1]$, and the action on $B$ is linear since $B$ has dimension three; see Remark~\ref{rmk:automatic_linear}.

	Taking the $G$--orbit of $\Bb''_Z$ provides balls for all spheres in $\Ss''$ contained in the orbit of $Z$.
	Repeat this process across all orbits of balls in $X\setminus\nu(\Bb)$ to obtain $\Bb''$, an invariant collection of disjoint $3$--balls bounded by $\Ss''$.
	It now suffices to show that $\Bb''$ is a linearly parting ball-system.
	
	Every component of $X\setminus\nu(\Bb'')$ is the union of linear 4--balls along the 3--balls of $\Bb$; hence, every such component is a linearly parted 4--dimensional 1--handlebody.
	Suppose some component $W$ is not a 4--ball.
	Then, there is an essential (and not boundary-parallel) sphere in $\partial W$.
	But $\partial W$ is a subset of $\partial X\setminus\nu(\Ss'')$, which is irreducible since $\Ss''$ is a sphere-system.
	Thus each component of $X\setminus\nu(\Bb'')$ is a linearly parted $4$--ball, so by Corollary~\ref{cor:parted_Bn}, each such component is a linear $4$--ball, and hence $\Bb''$ is a linearly parting ball-system for $X$ bounded by $\Ss''$.
\end{proof}

We conclude this section by remarking on some equivalent formulations of theorems such as Theorem~\ref{thm:laud_poen}(2) and the classical Laudenbach-Po\'enaru theorem that may at first seem non-obvious.
Our recalling of the classical Laudenbach-Po\'enaru theorem in the abstract and introduction takes the form of Proposition~\ref{prop:equivalent_statements}(1), while our Theorem~\ref{thm:laud_poen}(2) takes the form of Proposition~\ref{prop:equivalent_statements}(3).

\begin{proposition}
\label{prop:equivalent_statements}
	Let $X$ and $X'$ be diffeomorphic manifolds.
	The following are equivalent:
	\begin{enumerate}
		\item For every $\gamma\in \Diff(\partial X)$, there exists $\Gamma\in \Diff(X)$ with $\Gamma|_{\partial X} = \gamma$, i.e. $\Gamma$ extends $\gamma$.
		\item For every $f\in\Diff(\partial X,\partial X')$, there exists an $F\in\Diff(X,X')$ such that $F\vert_{\partial X} = f$, i.e. $F$ extends $f$.
		\item If $\partial X = \partial X'$, then there exists $\Phi\in\Diff(X,X')$ such that $\Phi\vert_{\partial X} = \Id_{\partial X}$, i.e. $X$ and $X'$ are diffeomorphic \emph{rel-boundary}.
	\end{enumerate}
	Moreover, the above statements remain equivalent in the equivariant setting in which each manifold is equipped with a $G$--action for some finite group $G$ and each diffeomorphism is $G$--equivariant.
\end{proposition}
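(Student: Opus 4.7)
The plan is to establish the cycle $(3) \Rightarrow (2) \Rightarrow (1) \Rightarrow (3)$, with all arguments carrying over to the equivariant setting by keeping track of $G$-equivariance at each step. Two of the implications are essentially tautological: for $(2) \Rightarrow (1)$, specialize to $X' = X$, and for $(2) \Rightarrow (3)$, apply $(2)$ to $f = \Id_{\partial X}$, obtaining a diffeomorphism that is the identity on the boundary.

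The implication $(1) \Rightarrow (2)$ is a one-line composition trick. Fix any diffeomorphism $\Psi \colon X \to X'$ (which exists by hypothesis) and set $\psi = \Psi|_{\partial X} \in \Diff(\partial X, \partial X')$. Given $f \in \Diff(\partial X, \partial X')$, the composition $\psi^{-1}\circ f$ is a self-diffeomorphism of $\partial X$, so by $(1)$ it extends to some $\Gamma \in \Diff(X)$. Then $F := \Psi \circ \Gamma$ is a diffeomorphism $X \to X'$ restricting to $f$ on $\partial X$. In the equivariant setting, $\psi^{-1}\circ f$ is $G$-equivariant whenever $\psi$ and $f$ are, so the equivariant version of $(1)$ applies and yields an equivariant $F$.

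The main step is $(3) \Rightarrow (2)$, which I will handle via a boundary-reparametrization construction. Given $f\colon \partial X \to \partial X'$, I build a new manifold $X_f$ by attaching an external collar $\partial X \times [0,1]$ to $X$ along $\partial X \times \{0\} = \partial X$ and identifying $\partial X \times \{1\}$ with $\partial X'$ via $f$; absorbing the collar gives a smooth structure in which the natural inclusion $\iota\colon X \hookrightarrow X_f$ is a diffeomorphism whose boundary restriction is $f$. Thus $X_f$ is diffeomorphic to $X$, hence to $X'$, and $\partial X_f = \partial X'$. Applying $(3)$ to the pair $(X_f, X')$ produces a rel-boundary diffeomorphism $\Phi \colon X_f \to X'$, and $F := \Phi \circ \iota\colon X \to X'$ restricts to $\Phi|_{\partial X'} \circ f = f$ on $\partial X$, as required.

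For the equivariant version of $(3) \Rightarrow (2)$, I equip the collar $\partial X \times [0,1]$ with the product $G$-action coming from $\partial X$; since $f$ is $G$-equivariant, the identification with $\partial X'$ respects the actions, so the $G$-action on $X$ extends canonically to $X_f$, and $\iota$ is equivariant. Applying the equivariant form of $(3)$ then produces an equivariant $\Phi$, and $F = \Phi \circ \iota$ is the desired equivariant extension of $f$. I do not expect any genuine obstacle here: the only delicate point is verifying that the collar attachment yields a smooth $G$-manifold whose boundary is equivariantly $\partial X'$, which is immediate from the product structure of the collar and equivariance of $f$.
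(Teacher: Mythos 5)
Your proposal is correct and follows essentially the same route as the paper: the implications out of (2) are immediate, $(1)\Rightarrow(2)$ is the same composition trick with a fixed reference diffeomorphism, and $(3)\Rightarrow(2)$ re-identifies one boundary with the other via $f$ before invoking the rel-boundary statement. The only cosmetic difference is that you reparametrize $X$ by an external collar to make $\partial X_f=\partial X'$, whereas the paper relabels the boundary of $X'$ by forming $X''=\partial X\cup_{f^{-1}}X'$; both formalize the same idea, and the equivariant bookkeeping goes through in either version (yours just additionally uses an equivariant collar, which exists for finite group actions).
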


\begin{proof}
	The implications $(2)\Rightarrow(1)$ and $(2)\Rightarrow(3)$ are immediate.
	
	For the implication $(1)\Rightarrow(2)$, let $\Phi\in\Diff(X,X')$ with $\Phi\vert_{\partial X} = \varphi$, and let $f\in\Diff(\partial X, \partial X')$ be given.
	Let $\gamma = f^{-1}\circ\varphi \in \Diff(\partial X)$.
	By (1), there exists $\Gamma\in\Diff(X)$ such that $\Gamma\vert_{\partial X} = \gamma$.
	Let $F = \Phi\circ\Gamma^{-1}$, so $F\in\Diff(X, X')$ and $F\vert_{\partial X} = \phi\circ\gamma^{-1} = f$, as desired.
	
	For the implication $(3)\Rightarrow(2)$, let $f\in\Diff(\partial X, \partial X')$ be given.
	Using $f$, we think of $\partial X'$ as equal to $\partial X$.
	Formally, let $X'' = \partial X\cup_{f^{-1}}X'$, and note that $\partial X = \partial X''$ by construction.
	By (3), there exists $\Phi\in\Diff(X, X'')$ such $\Phi\vert_{\partial X} = \Id_{\partial X}$.
	Letting $F=\Phi$, we have $F\in\Diff(X,X')$ with $F\vert_{\partial X}=f$, as desired.
	
	All of this reasoning carries over to the equivariant setting.	
\end{proof}

\section{Equivariant disk-tangles in equivariant $1$--handlebodies}
\label{sec:ELP_surfaces}

Our second main theorem, Theorem~\ref{thm:equiv_patch}, is an adaptation of Theorem~\ref{thm:laud_poen} to the setting of pairs $(X,\Dd)$, where $X\cong\natural^k(S^1\times B^3)$ and $\Dd\subset X$ is a boundary-parallel disk-tangle.
This gives an equivariant version of~\cite[Lemma~8]{MeiZup_18_Bridge-trisections}, which built on earlier work of Livingston~\cite{Liv_82_Surfaces-bounding-the-unlink}.
Before proving Theorem~\ref{thm:equiv_patch}, we introduce the relevant definitions and prove the required lemmata.
The following definitions are motivated by and developed in detail in~\cite{MeiSco_tri}.

\begin{definition}
	Let $X$ be a 4--dimensional 1--handlebody.
	A \emph{$p$--patch disk-tangle} $\Dd$ is a disjoint collection of neatly embedded disks in $X$.
	A disk-tangle $\Dd$ is \emph{boundary-parallel} if $\Dd$ can be isotoped relative to $\partial\Dd$ to lie in $\partial X$.
	Equivalently, if $\Dd = \cup_{i=1}^p D_i$, there are pairwise disjoint $3$--balls $\Delta = \cup_{i=1}^p\delta_i$ such that $\partial\delta_i = F_i\cup D_i$, where $F_i$ is a spanning disk for the component $\partial D_i$ of the unlink $L=\partial \Dd$ in $\partial X$.

	The balls of $\Delta$ are collectively call \emph{bridge-balls} for the disk-tangle $\Dd$, and the disks $\Ff = \cup_{i=1}^pF_i$ are called \emph{Seifert disks} for $L$.
	
\end{definition}

\begin{definition}
	Let $G$ act on a $4$--dimensional $1$--handlebody $X$.
	An invariant boundary-parallel disk-tangle $\Dd = \cup_i D_i$ in $X$ is \emph{equivariantly boundary-parallel} if there is a collection $\Delta = \cup_i\delta_i$ of bridge-balls that is $G$--invariant and \emph{almost-disjoint}: for each $i,j$, either $\delta_i\cap\delta_j = \varnothing$ or, if $\delta_i, \delta_j$ are bridge-balls for the same disk $D_k$, $\delta_i\cap\delta_j = D_k$.

	A bridge-ball $\delta$ (respectively, a Seifert disk $F$) will be called \emph{almost-equivariant} if its orbit is an almost-disjoint collection of bridge-balls (respectively, Seifert disks).
	
\end{definition}

In dimension three, it is possible to use the~\ref{ELT} to prove that an invariant, boundary-parallel tangle in a $1$--handlebody is equivariantly boundary-parallel; see ~\cite[Subsection~3.2]{MeiSco_tri} for a complete discussion.
Owing to the lack of a 4--dimensional analog of the Equivariant Loop Theorem, we can but pose the following question regarding dimension four.

\begin{question}
	\label{ques:disk-tangle-parallel}
	Let $G$ act on a 4--dimensional 1--handlebody $X$, and let $\Dd$ be a $G$--invariant, boundary-parallel disk-tangle in $X$.
	Is $\Dd$ equivariantly boundary-parallel?
\end{question}

We can now introduce the relevant class of actions in this setting.

\begin{definition}
\label{def:linearly_parted_pair}
	Let $G$ act on  a $4$--dimensional $1$--handlebody $X$.
	Let $\Dd$ be an equivariantly boundary-parallel disk-tangle in $X$.
	We say that the pair $(X,\Dd)$ is \emph{linearly parted as a pair} if there exists a linearly parting system $\Bb$ for $X$ and a collection of equivariant bridge-balls $\Delta$ for $\Dd$ such that $\Bb\cap\Delta = \varnothing$ and each $4$--ball component of $X\setminus\Bb$ contains at most one disk of $\Dd$.
\end{definition}

Again, owing to the lack of analogues to the $3$--dimensional techniques, we can but pose the following question regarding whether equivariantly boundary-parallel disk-tangles are always linearly parted as pairs.
This question is more technical than the previous one but perhaps also more tractable.

\begin{question}
	Let $G$ act on a 4--dimensional 1--handlebody $X$, and suppose that $X$ is linearly parted. 
	Let $\Dd$ be an equivariantly boundary-parallel disk-tangle in $X$.
	Is $(X,\Dd)$ linearly parted as a pair?
\end{question}

We conclude this introduction to equivariant boundary-parallel disks with an interesting corollary to Theorem~\ref{thm:laud_poen} that uses Proposition~\ref{prop:EST_pairs} and Theorem~\ref{thm:equiv_patch}(1) (both of which are essentially corollaries of the Equivariant Sphere Theorem). 
This corollary offers solutions to the above questions in some special cases.
In particular, we show that under certain hypotheses, whether an invariant surface in a handlebody is linearly parted as a pair is determined by its boundary.

\begin{corollary}
\label{coro:must-be-disk-tangle}
	Let $G$ act on a $4$--dimensional $1$--handlebody $X$ so that the action is linearly parted.
	Let $\Dd\subset X$ be a neatly embedded, invariant surface and suppose that each component of $\Dd$ is fixed point-wise by a non-trivial element of $G$.
	Then $\Dd$ is an equivariantly boundary-parallel disk-tangle and the action on $(X,\Dd)$ is linearly parted as a pair if and only if $\partial \Dd\subseteq \partial X$ is an unlink.
\end{corollary}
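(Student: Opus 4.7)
The forward implication is immediate from the definitions: if $(X,\Dd)$ is linearly parted as a pair, then the almost-disjoint equivariant bridge-balls $\Delta=\bigcup_i \delta_i$ for $\Dd$ satisfy $\delta_i\cap\delta_j=\varnothing$ whenever $\delta_i$ and $\delta_j$ are bridge-balls for distinct disks of $\Dd$, so the associated Seifert disks $F_i\subset\partial X$ form a pairwise-disjoint family of spanning disks for the components of $\partial\Dd$; hence $\partial\Dd$ is an unlink.

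For the reverse implication, assume $\partial\Dd$ is an unlink. The plan is to produce a model equivariantly boundary-parallel disk-tangle in $X$ with the same boundary as $\Dd$, and then use the pointwise-fixing hypothesis to equivariantly isotope $\Dd$ onto this model. First, apply Theorem~\ref{thm:equiv_patch}(1) to the pair $(\partial X,\partial\Dd)$ to obtain a $G$--equivariant filling $(X^\star,\Dd^\star)$, so that $\Dd^\star$ is an equivariantly boundary-parallel disk-tangle with $(X^\star,\Dd^\star)$ linearly parted as a pair by some $\Bb^\star$. Next, apply Theorem~\ref{thm:laud_poen}(2) to obtain a $G$--equivariant diffeomorphism $\Phi\colon X^\star\to X$ that restricts to the identity on $\partial X^\star=\partial X$. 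Setting $\widehat\Dd=\Phi(\Dd^\star)$ and $\widehat\Bb=\Phi(\Bb^\star)$, the pair $(X,\widehat\Dd)$ is linearly parted by $\widehat\Bb$, the surface $\widehat\Dd$ is a disk-tangle, and $\partial\widehat\Dd=\partial\Dd$.

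It remains to show that $\Dd$ is $G$--equivariantly isotopic rel-boundary to $\widehat\Dd$, and this is where the pointwise-fixing hypothesis is essential. In each linear $4$--ball component $Z$ of $X\setminus\widehat\Bb$ containing a disk $D^\star$ of $\widehat\Dd$, the boundary circle $\partial D^\star$ is pointwise-fixed by a rotation $g_Z\in \Stab_G(Z)$ (since $\partial\Dd$ inherits its pointwise stabilizers from $\Dd$), and in the linear action on $Z\cong B^4$ there is a unique invariant disk bounded by this circle up to $G$--equivariant isotopy rel-boundary---namely, the fixed $2$--disk of $g_Z$---by applying Theorem~\ref{thm:equiv_patch}(2) to $Z$ viewed as a handlebody with a trivial $1$--handle structure. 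The plan is to arrange the parting $\widehat\Bb$, via equivariant transversality in $\partial X$ together with the surgery-and-capping techniques of Lemmata~\ref{lem:innermost} and~\ref{lem:new_ball-system}, so that $\widehat\Bb$ is disjoint from $\Dd$ and each component of $\Dd$ lies inside a single linear $4$--ball of $X\setminus\widehat\Bb$; each such component, being invariant and pointwise-fixed by a nontrivial rotation, must then coincide with the fixed disk of that rotation in its $4$--ball, and hence with the corresponding component of $\widehat\Dd$ up to equivariant isotopy. These local isotopies, being rel-boundary in each $4$--ball, piece together into the desired global equivariant isotopy.

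The main obstacle---and the step that I expect will require the most technical care---is the reduction forcing each component of $\Dd$ to sit inside a single linear $4$--ball of a suitably chosen linear parting. This reduction is what simultaneously rules out higher genus and multiple boundary circles among the components of $\Dd$, thereby upgrading the hypothesis that $\partial\Dd$ is an unlink to the conclusion that $\Dd$ itself is a disk-tangle; it is also what provides the almost-disjoint equivariant bridge-balls witnessing that $(X,\Dd)$ is linearly parted as a pair.
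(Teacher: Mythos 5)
Your forward direction matches the paper's, and your overall skeleton for the converse --- build a model equivariant filling via Theorem~\ref{thm:equiv_patch}(1) and transport it into $X$ by the rel-boundary diffeomorphism of Theorem~\ref{thm:laud_poen}(2) --- is exactly what the paper does. The gap is in the final identification of $\Dd$ with the transported model $\widehat{\Dd}$. You propose to equivariantly isotope $\Dd$ onto $\widehat{\Dd}$ after first rechoosing the parting so that each component of $\Dd$ lies in a single linear $4$--ball, and you correctly flag this reduction as the main obstacle --- but you never carry it out, and it is not clear it can be: at this stage $\Dd$ is only an invariant neatly embedded surface (not yet known to be a disk-tangle, let alone equivariantly boundary-parallel), so the surgery-and-capping machinery of Lemmata~\ref{lem:innermost} and~\ref{lem:new_ball-system}, which manipulates sphere-systems and Seifert disks in $\partial X$, gives no control over where the interior of $\Dd$ sits relative to a parting ball-system inside the $4$--manifold. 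Arranging an invariant surface to be disjoint from a parting and confined to single $0$--handles is exactly the kind of $4$--dimensional equivariant position statement the paper explicitly cannot prove (cf.\ Question~\ref{ques:disk-tangle-parallel}).

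The paper's proof sidesteps any isotopy of $\Dd$ by using the pointwise-fixing hypothesis more directly. Each component $D$ of $\Dd$ is pointwise fixed by some $g$ of prime order, and a compact neatly embedded surface contained in the $2$--dimensional part of $\Fix_X(g)$ with boundary in $\partial X$ is automatically an entire connected component of $\Fix_X(g)$. The same holds for the corresponding model disk $D'$ in the filling $X'$, since the equivariant cone on the pointwise-fixed circle $\partial D$ is itself pointwise fixed by $g$. The rel-boundary equivariant diffeomorphism $\psi\colon X'\to X$ carries $\Fix_{X'}(g)$ onto $\Fix_X(g)$, so $\psi(D')$ and $D$ are both the unique component of $\Fix_X(g)$ containing the circle $\partial D$, hence are \emph{equal on the nose}. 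This simultaneously shows that $\Dd$ is a disk-tangle (each $D$ is the image of a disk) and that $(X,\Dd)$ inherits the linear parting and bridge-ball systems pushed forward from the model. Replacing your isotopy step with this fixed-point-set rigidity argument closes the gap.
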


\begin{proof}
	We begin with the forward direction: letting $\Delta$ be a bridge-ball system for $\Dd$, $\Delta\cap \partial X$ is an almost-disjoint collection of Seifert disks for $\partial \Dd$ which demonstrates that $\partial \Dd$ is an unlink.

	Now for the reverse direction.
	By Proposition~\ref{prop:EST_pairs}, there exists a $G$--invariant sphere-system $\Ss$ for $Y = \partial X$ and a $G$--invariant, almost-disjoint collection $\Ff$ of Seifert disks for $L = \partial\Dd$ such that $\Ss\cap\Ff = \varnothing$ and $\Ss$ separates the components of $L$.
	By Theorem~\ref{thm:equiv_patch}(1), there exists a $G$--equivariant filling $(X',\Dd')$ of $(Y,L)$ along $\Ss$ and $\Ff$ that is linearly parted as a pair.
	By Theorem~\ref{thm:laud_poen}, there is a $G$--equivariant diffeomorphism rel-boundary $\psi\colon X'\to X$.
	
	Let $g\in G$ with prime order. Let the $2$--dimensional part of the fixed-point set of $g$ in $X$ be $\Fix_{X}(g)$ and similar for $\Fix_{X'}(g)$ and note that each set consists of a disjoint union of neatly-embedded surfaces since $g$ acts orientation-preservingly on $X$ and $X'$.
	Since $\psi$ is $G$--equivariant, it is $g$--equivariant for all $g\in G$, so $\psi(\Fix_{X'}(g)) = \Fix_{X}(g)$.

	Let $\Dd''\subseteq X$ be $\psi(\Dd')$.
	Note that since $\psi$ is the identity on $\partial X$, $\partial D' = \partial \psi(D')$.
	Note also that by pushing forward the linear parting system and bridge ball systems from $X'$, the action of $G$ on $(X,\Dd'')$ is linearly parted as a pair.
	
	Let $D\in \Dd$. We claim that for the unique disk $\psi(D')$ of $\Dd''$ with $\partial \psi(D') = \partial D' = \partial D$, $D = \psi(D')$.
	Let $g\in G$ fix $\partial D$, and by taking a power of $g$ if necessary, assume that $g$ is prime order.
	The diffeomorphism $\psi$ maps $\Fix_{X'}(g)$ to $\Fix_{X}(g)$.
	There is a unique connected component of $\Fix_{X'}(g)$ which contains $\partial D'$, namely $D'$, and $\psi$ maps $\partial D'$ to $\partial D$ by assumption, hence $\psi$ maps the unique connected component of $\Fix_{X'}(g)$ containing $\partial D'$ to the unique connected component of $\Fix_{X}(g)$ containing $\partial D' = \partial D$, i.e. $\psi$ maps $D'$ onto $D$.
	Thus $\Dd = \Dd''$, and so $(X,\Dd)$ is linearly parted as a pair.
\end{proof}

Let $G$ act on $Y = \#^k(S^1\times S^2)$, and let $L\subset Y$ be a $G$--invariant unlink. 
A \emph{$G$--equivariant filling} of the pair $(Y,L)$ is a pair $(X,\Dd)$, where $X$ is an equivariant filling of $Y$ (as in the previous section), $\Dd$ is an equivariantly boundary-parallel disk-tangle in $X$ with $\partial\Dd = L$, and the pair $(X, \Dd)$ is linearly parted as a pair.

We require the following adaptation of the Equivariant Sphere Theorem to the present setting.

\begin{proposition}
\label{prop:EST_pairs}
	Let $G$ act on $Y=\#^k(S^1\times S^2)$, and let $L\subset Y$ be a $G$--invariant unlink.
	There exists a $G$--invariant sphere-system $\Ss$ for $Y$ and a $G$--invariant, almost-disjoint collection $\Ff$ of Seifert disks for $L$ such that $\Ss\cap\Ff = \varnothing$ and each component of $Y\setminus\nu(\Ss)$ contains at most one component of $L$.
\end{proposition}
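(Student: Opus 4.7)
The plan is to construct $\Ss$ and $\Ff$ in three stages: first, produce a $G$--invariant sphere-system $\Ss_0$ and a $G$--invariant, almost-disjoint collection $\Ff_0$ of Seifert disks independently using equivariant 3--dimensional techniques; second, perform equivariant surgery to make them disjoint; third, enlarge the sphere-system to separate the components of $L$.

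For the first stage, I would invoke the Equivariant Sphere Theorem of Meeks--Yau and Dunwoody to produce $\Ss_0$, and construct $\Ff_0$ as follows. Since $L$ is a $G$--invariant unlink, each component $L_i$ is null-homotopic in $Y$. Choose one representative $L_i$ per $G$--orbit of components of $L$, apply an arbitrarily small equivariant isotopy to arrange $L_i$ transverse to the exceptional set of $\Stab_G(L_i)$, and invoke the \ref{ELT} with group $\Stab_G(L_i)$ to produce a $\Stab_G(L_i)$--invariant neatly embedded disk $F_i\subset Y$ with $\partial F_i = L_i$. Taking the $G$--orbit of each such $F_i$ and ranging over the orbit-representatives yields a $G$--invariant collection of Seifert disks whose members may intersect one another. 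Since distinct components of $L$ are disjoint, the collection can be made almost-disjoint by iteratively applying equivariant transversality in $Y$ and surgering along orbits of innermost arcs and circles of pairwise intersections, in the same manner as the surgery in the proof of Lemma~\ref{lem:innermost}.

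For the second stage, I would first arrange $L$ to be disjoint from $\Ss_0$ (by using innermost subdisks of $\Ff_0$ cut off by arcs of $\Ff_0\cap\Ss_0$ to isotope $L$ across $\Ss_0$ equivariantly) and then use Lemma~\ref{lem:3d_trans} to make $\Ss_0$ transverse to $\Ff_0$, whose intersection $\Cc$ is then a disjoint union of circles. Each circle $\omega\in\Cc$ bounds an innermost disk in some component of $\Ff_0$; choosing an orbit of such innermost disks of minimal complexity (so that the orbit is disjoint) and surgering $\Ss_0$ along this orbit produces a new $G$--invariant sphere-system with strictly fewer intersection circles with $\Ff_0$, exactly as in the proof of Lemma~\ref{lem:innermost}. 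Iterating produces a modified $\Ss_0$, still a $G$--invariant sphere-system, with $\Ss_0\cap\Ff_0=\varnothing$.

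For the final stage, within each component $P$ of $Y\setminus\nu(\Ss_0)$ containing more than one component of $L$, I would take $S_i:=\partial\nu(F_i)$ for each $F_i\subset P$, where $\nu(F_i)$ is a sufficiently small regular neighborhood of $F_i$ in the interior of $P$. Each $S_i$ is a $2$--sphere in $P$ that bounds a ball containing $L_i$ in its interior; it is therefore disjoint from $\Ff_0$ and from the other $S_j\subset P$, and it separates $L_i$ from the other components of $L\cap P$. Adjoining the $G$--orbit of all such $S_i$ to $\Ss_0$ yields the desired sphere-system $\Ss$, and setting $\Ff:=\Ff_0$ completes the construction. I expect the main obstacle to be the construction of the almost-disjoint equivariant Seifert disks in the first stage: the \ref{ELT} produces only one disk per orbit at a time, and assembling disks from distinct orbits into an almost-disjoint family requires equivariant innermost-arc surgery arguments that must carefully track how orbits of intersection arcs sit on the orbit of $F_i$ relative to its stabilizer. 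In the trivial-group case this collapses to the standard innermost-arc argument for producing disjoint Seifert disks for an unlink.
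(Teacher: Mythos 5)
Your overall architecture is genuinely different from the paper's, and the difference is exactly where your gap lies. You build the sphere-system $\Ss_0$ and the Seifert disks $\Ff_0$ independently inside $Y$ and then try to repair all the intersections afterwards (disks with each other, disks with $L$, disks with spheres), finally adding extra spheres $\partial\nu(F_i)$ to separate the components of $L$. The unresolved step is the one you flag yourself: producing the almost-disjoint equivariant collection $\Ff_0$ in the first place. Two concrete problems there. First, the~\ref{ELT} requires the curve to lie in the \emph{boundary} of the $3$--manifold, so you cannot apply it to $L_i$ sitting in the interior of the closed manifold $Y$; you must pass to the exterior $Y\setminus\nu(L)$ and use a longitude. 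Second, and more seriously, applying the theorem only with $\Stab_G(L_i)$ and then taking the full $G$--orbit gives a family of disks whose members can intersect one another and can intersect other components of $L$, and the proposed fix --- equivariant innermost-arc/circle surgery --- is not routine: the $G$--orbit of an innermost subdisk need not be an embedded disjoint family (translates can run through the very disks being surgered), and there is no analogue of Lemma~\ref{lem:3d_trans} in your write-up that controls this. Since this is the heart of the construction, the proof as written has a genuine gap. (A smaller issue: in stage two you propose isotoping $L$ off $\Ss_0$, but $L$ is part of the given data; you would need to isotope $\Ss_0$ instead, or invoke equivariant isotopy extension to undo the motion of $L$ at the end.)

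The paper's proof is arranged precisely to make all of these repairs unnecessary, by constructing each object inside the complement of everything built so far. Dunwoody's equivariant sphere theorem is applied to the link exterior $M=Y\setminus\nu(L)$ rather than to $Y$, so the resulting $\Ss$ is automatically disjoint from $L$, and irreducibility of the complementary pieces forces each component of $Y\setminus\nu(\Ss)$ to contain at most one component of $L$ --- your entire third stage comes for free. Then the~\ref{ELT} is applied with the \emph{full} group $G$ acting on $M'=Y\setminus\nu(L\cup\Ss)$ to a longitude on one torus boundary component: the output is a single $G$--equivariant disk, so its whole orbit is already an almost-disjoint family of Seifert disks for the orbit of that component, disjoint from $\Ss$ and from all of $L$. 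Iterating in the complement of the disks already produced handles the remaining orbits. If you reorder your construction this way, no intersection-removal between disks is ever needed and your argument closes up.
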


\begin{proof}
	Let $\nu(L)$ be an open $G$--invariant tubular neighborhood of $L$ in $Y$, and let $M = Y\setminus \nu(L)$.
	By~\cite[Theorem~4.1]{Dun_85_An-equivariant-sphere}, $M$ admits an invariant collection of pairwise disjoint two-spheres such that each component of $M\setminus\nu(\Ss)$ is irreducible.
	It follows that $\Ss$ is an invariant sphere-system for $Y$ and that each component of $Y\setminus\nu(\Ss)$ contains at most one component of $L$.
	
	Continuing, now let $M' = Y\setminus\nu(L\cup\Ss)$.
	Let $\Sigma$ a torus component of $\partial M'$, and let $C\subset\Sigma$ be a longitude for the component $K\subset L$ corresponding to $\Sigma$.
	We will show that after a small equivariant isotopy of $C$, $M'$ and $C$ satisfy the hypotheses of the~\ref{ELT}.
	
	Since $G$ acts on $\nu(K)$ orientation-preservingly, the exceptional set of the action of $G$ on $\nu(K)$ is a graph, so the exceptional set $E$ of the action of $G$ on $\Sigma$ is a finite collection of points.
	We can equivariantly isotope $C$ to a parallel curve that is disjoint from $E$, establishing transversality as required by part (3) of the Equivariant Loop Theorem.
	Since $C$ is a longitude for the unknotted and unlinked component $K\subset L$, we have that $C$ is null-homotopic in $M'$.
	It follows that $M'$ and $C$ satisfy the hypotheses of the Equivariant Loop Theorem.

	Let $D$ denote the equivariant disk in $M'$ with $\partial D=C$ guaranteed by the Equivariant Loop Theorem.
	Adding a collar to $D$ in $\nu(L)$, we obtain an almost-equivariant Seifert disk $F_K$ for the component $K$ of $L$.
	Let $\Ff'_K$ denote the orbit of $F_K$, an almost-disjoint collection of Seifert disks for the orbit of $K$.
	
	Now, let $M''=Y\setminus\nu(L\cup\Ss\cup \Ff'_K)$.
	Choose a torus component of $\partial M''$, and repeat the above argument to get an invariant, almost-disjoint collection of Seifert disks for the orbit of the component of $L$ corresponding to that torus.
	The new Seifert disks will be disjoint from $\Ff'_K$, since $\nu(\Ff'_K)$ was excluded from $M''$.
	Repeating this approach, we obtain a $G$--invariant union $\Ff$ of almost-disjoint Seifert disks for $L$ that is disjoint from the invariant sphere-system $\Ss$ and has the property that there is at most one component of $L$ in each component of $Y\setminus\nu(\Ss)$, as desired.
\end{proof}

We next require a version of Lemma~\ref{lem:unique_handle_attachment} adapted to the present setting.
This will be Corollary~\ref{cor:disk_fillings_diffeomorphic}, which follows from our next lemma.

\begin{lemma}
\label{lem:disks_in_ball_diffeomorphic}
	Let $G$ act linearly on $B^4$, and let $U$ be an unknot in $\partial B^4 = S^3$. Let $D$ and $D'$ be two invariant, neatly embedded, equivariantly boundary-parallel disks in $B^4$ with $\partial D = \partial D' = U$.
	There exists an equivariant diffeomorphism $\phi$ of $B^4$ that is the identity on $\partial B^4$ with $D' = \phi(D)$.
\end{lemma}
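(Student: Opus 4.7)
The plan is to equivariantly isotope both $D$ and $D'$ (rel $U$) to a common standard invariant disk $D_0 \subset B^4$, and then appeal to the Equivariant Isotopy Extension Theorem (as used in the proof of Lemma~\ref{lem:2_handle_attachment}) to promote this to an ambient isotopy of $B^4$ rel $\partial B^4$; the time-one map of the resulting isotopy is the desired equivariant diffeomorphism $\phi$.

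The standard model $D_0$ is constructed from the linear $G$-action: since this action is the equivariant cone with apex $0$ over the induced (linear) action on $S^3$, the straight cone $C(U) = \{tu : u \in U,\, t \in [0,1]\}$ is an invariant singular disk bounded by $U$. Equivariantly smooth $C(U)$ near the apex using the local linear structure, producing a smooth invariant disk $D_0$ which is equivariantly boundary-parallel by construction. To isotope $D$ equivariantly to $D_0$ rel $U$, use the almost-equivariant bridge-ball $\delta$ for $D$ with orbit $\{\delta_1,\ldots,\delta_n\}$ and Seifert disks $F_i = \partial\delta_i \setminus D^\circ$. When $n=1$ (so $\delta$ is $G$-invariant), the product structure $\delta \cong D \times [0,1]$ directly provides an equivariant isotopy of $D$ onto $F_1 \subset \partial B^4$; then an equivariant isotopy within $\partial B^4$ (justified by the \ref{ELT} applied to the linear $G$-action on $S^3$, which governs the uniqueness of Seifert disks for the invariant unknot $U$) followed by an equivariant inward push aligns the result with $D_0$. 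When $n > 1$, combine the product structures on the $\delta_i$ using an equivariant partition of unity on $D$ (lifted from the orbit space $D/\Stab_G(D)$) to produce an equivariant isotopy of $D$ to a disk in the interior of $B^4$, which can then be further equivariantly isotoped to $D_0$ using the equivariant cone structure via Corollary~\ref{cor:Cones_Linear} and Lemma~\ref{lem:cone_equivalence}. Repeating the procedure for $D'$ and composing yields an equivariant isotopy $D \simeq D'$ rel $U$.

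The main obstacle is the almost-equivariant case ($n > 1$): examples such as the antipodal action on $B^4$ (with $U$ an equatorial great circle) show that no $G$-invariant bridge-ball for $D$ need exist, since the antipodal action on $S^3$ admits no invariant Seifert disk for $U$. Consequently, no equivariant isotopy can carry $D$ entirely into $\partial B^4$, which forces the construction of the standard model $D_0$ inside $B^4$ rather than on $\partial B^4$, and forces the isotopy to be built by combining product structures across the bridge-ball orbit via equivariant partition-of-unity techniques while preserving the embedded property. A secondary technical point is the equivariant smoothing of $C(U)$ near the apex in representations lacking $2$-dimensional invariant subspaces (for instance, an irreducible $4$-dimensional real representation of $A_5$); here the smoothing must be performed via an equivariant normal perturbation of the singular cone rather than a choice of invariant tangent plane, again leveraging the linear structure through Corollary~\ref{cor:Cones_Linear}.
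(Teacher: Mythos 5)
Your proposal aims at a strictly stronger conclusion than the lemma requires: you try to show $D$ and $D'$ are equivariantly \emph{isotopic} rel $U$ (to a common model $D_0$) and then apply isotopy extension. The paper explicitly remarks, immediately after this lemma, that the lemma does \emph{not} establish equivariant isotopy of the disks and that doing so ``seems to be a challenging problem, since the usual techniques in the non-equivariant setting require isotopies that cannot be performed equivariantly.'' So the route you have chosen is precisely the one the authors identify as out of reach, and the concrete step where your argument breaks is the $n>1$ case: the bridge-balls $\delta_1,\dots,\delta_n$ in the orbit are almost-disjoint, meeting pairwise exactly along $D$ itself, and their product structures push $D$ toward \emph{different} Seifert disks in $\partial B^4$. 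A partition of unity lets you average functions or vector fields, but averaging the $n$ competing product-structure flows gives no control over injectivity of the resulting map at intermediate times --- there is no reason the blended track of $D$ stays embedded, nor that the result is an isotopy at all, and equivariance of the weights does not repair this. The appeal to the~\ref{ELT} for ``uniqueness of Seifert disks'' is also misplaced: that theorem produces equivariant disks but says nothing about uniqueness up to equivariant isotopy.

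The paper's proof sidesteps isotopy entirely by passing to complements: $X=B^4\setminus\nu(D)$ and $X'=B^4\setminus\nu(D')$ are equivariant copies of $S^1\times B^3$, linearly parted by the ball-systems coming from the bridge-balls; their boundaries are both identified with $0$--framed surgery on $U$, giving an equivariant diffeomorphism of boundaries; Theorem~\ref{thm:laud_poen}(2) (via Proposition~\ref{prop:equivalent_statements}) extends this to an equivariant diffeomorphism $F\colon X\to X'$; and Lemma~\ref{lem:2_handle_attachment} extends $F$ over the $2$--handles $\nu(D)$, $\nu(D')$, carrying cocore to cocore, i.e.\ $D$ to $D'$. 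If you want to salvage your approach, you would need either a genuinely new argument for equivariant isotopy of boundary-parallel disks (answering a question the authors leave open) or to switch to the complement-based strategy.
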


\begin{proof}
	Since $D$ is equivariantly boundary-parallel, $X = B^4 \setminus \nu(D)$ is diffeomorphic to $S^1\times B^3$.
	The same is true of $X' = B^4\setminus\nu(D')$.
	The induced $G$--actions on $X$ and $X'$ are linearly parted by ball-systems $\Bb$ and $\Bb'$, respectively, that come from the almost-equivariant bridge-balls for $D$ and $D'$.
	The boundary of $X$ is  $Y = (S^3\setminus\nu(U))\cup V$, the result of equivariant $0$--framed Dehn surgery on the invariant unknot $\partial D = U$, with Dehn filling solid torus $V$.
	Symmetrically, we see that $\partial X'$ has the same form, $Y' = (S^3\setminus\nu(U))\cup V'$, with Dehn filling solid torus $V'$.
	Since the framings for these surgeries are the same, as in the proof of Lemma~\ref{lem:2_handle_attachment}, this gives an equivariant diffeomorphism $f\colon Y\to Y'$.
	By Theorem~\ref{thm:laud_poen}(2) and the equivalence of statements (2) and (3) in Proposition~\ref{prop:equivalent_statements}, there exists an equivariant diffeomorphism $F\colon X\to X'$ such that $F\vert_{\partial X} = f$.

	Since $B^4$ is obtained by attaching $G$--equivariant 2--handles $\frak h$ and $\frak h'$ to $X$ or $X'$ along $V$ and $V'$, respectively, by Lemma~\ref{lem:2_handle_attachment}, the $G$--equivariant diffeomorphism $F\colon X\to X'$ extends to a $G$--equivariant diffeomorphism of $B^4$ taking the cocore $\frak h$ to the cocore of $\frak h'$.
	These cocores are, respectively, $D$ and $D'$, completing the proof.
\end{proof}

We remark that this lemma does not establish that the disks are equivariantly isotopic.
Establishing this stronger property seems to be a challenging problem, since the usual techniques in the non-equivariant setting require isotopies that cannot be performed equivariantly.
In light of the above lemma, we have the following corollary in analogy with Lemma~\ref{lem:unique_handle_attachment}.

\begin{corollary}
\label{cor:disk_fillings_diffeomorphic}
	Let $G$ act linearly on $(S^3, U)$.
	Let $(B, D)$ and $(B', D')$ be two fillings of $(S^3, U)$.
	Then, $(B, D)$ and $(B'D')$ are diffeomorphic rel-boundary as pairs.
\end{corollary}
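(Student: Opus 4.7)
The plan is to assemble the corollary from Theorem~\ref{thm:laud_poen}(2) and Lemma~\ref{lem:disks_in_ball_diffeomorphic}. First I would observe that since $\partial B = \partial B' = S^3 = \#^0(S^1\times S^2)$, both $B$ and $B'$ are $4$--dimensional $1$--handlebodies diffeomorphic to $B^4$, and by definition of being fillings, their $G$--actions are linearly parted and extend the linear $G$--action on $S^3$. By Corollary~\ref{cor:parted_Bn}, any linearly parted $G$--action on $B^4$ is linear, so the $G$--actions on $B$ and $B'$ are themselves linear.

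Next, I would apply Theorem~\ref{thm:laud_poen}(2) to the pair $B$ and $B'$ to produce a $G$--equivariant diffeomorphism $\Psi\colon B\to B'$ that restricts to the identity on $\partial B=\partial B'=S^3$. Then $\Psi(D)\subset B'$ is a $G$--invariant, neatly embedded, equivariantly boundary-parallel disk (since $\Psi$ is a $G$--equivariant diffeomorphism and carries a bridge-ball system for $D$ to one for $\Psi(D)$), and $\partial \Psi(D)=\Psi(U)=U=\partial D'$. Now the linear $G$--action on $B'$ and the two disks $\Psi(D)$ and $D'$ satisfy the hypotheses of Lemma~\ref{lem:disks_in_ball_diffeomorphic}, so there is a $G$--equivariant self-diffeomorphism $\phi\colon B'\to B'$ that is the identity on $\partial B'$ and satisfies $\phi(\Psi(D))=D'$.

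Finally, the composition $\phi\circ\Psi\colon B\to B'$ is a $G$--equivariant diffeomorphism whose restriction to $\partial B$ is the identity and which sends $D$ to $D'$, giving the desired diffeomorphism of pairs rel-boundary. The only step that requires any genuine care is verifying that $\Psi(D)$ is equivariantly boundary-parallel in $B'$ in the precise sense needed to invoke Lemma~\ref{lem:disks_in_ball_diffeomorphic}, but this is automatic from the fact that $\Psi$ is a $G$--equivariant diffeomorphism and carries an almost-equivariant bridge-ball system for $D$ onto one for $\Psi(D)$; the bulk of the content really lies in Lemma~\ref{lem:disks_in_ball_diffeomorphic} and Theorem~\ref{thm:laud_poen}(2), which are assumed here.
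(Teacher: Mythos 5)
Your proposal is correct and follows essentially the same route as the paper: first produce a $G$--equivariant diffeomorphism $B\to B'$ that is the identity on $S^3$, then apply Lemma~\ref{lem:disks_in_ball_diffeomorphic} to $\Psi(D)$ and $D'$ inside $B'$, and compose. The only cosmetic difference is that you invoke Corollary~\ref{cor:parted_Bn} together with Theorem~\ref{thm:laud_poen}(2) for the first step, whereas the paper cites Lemma~\ref{lem:unique_handle_attachment} --- which the paper itself notes is exactly the special case of Theorem~\ref{thm:laud_poen}(2) where $X$ is $B^4$ with a linear action --- so the two arguments coincide.
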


\begin{proof}
	By Lemma~\ref{lem:unique_handle_attachment}, there is an equivariant diffeomorphism $\psi\colon B\to B'$ restricting to the identity on $S^3$.
	Then, $D'$ and $\psi(D)$ are invariant, boundary-parallel, neatly embedded disks in $B'$.
	By Lemma~\ref{lem:disks_in_ball_diffeomorphic}, there is a diffeomorphism $\Phi\colon (B',\psi(D))\to (B',D')$ restricting to the identity on the boundary.
	The desired diffeomorphism is $\Phi\circ\psi$.
\end{proof}

We are now ready to prove the main result of this section.

\begin{theorem}
\label{thm:equiv_patch}
	Let $G$ act on $Y=\#^k(S^1\times S^2)$, and let $L\subset Y$ be a $G$--invariant unlink.
	\begin{enumerate}
		\item There exists a $G$--equivariant filling $(X,\Dd)$ of $(Y,L)$.
		\item If $(X,\Dd)$ and $(X',\Dd')$ are two $G$--equivariant fillings of $(Y,L)$, then $(X,\Dd)$ and $(X',\Dd')$ are $G$--diffeomorphic rel-boundary.
	\end{enumerate}
\end{theorem}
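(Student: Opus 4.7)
For part (1), I would build the filling directly from Proposition~\ref{prop:EST_pairs}. That proposition provides a $G$--invariant sphere-system $\Ss$ for $Y$ together with an almost-disjoint, $G$--invariant collection $\Ff$ of Seifert disks for $L$, with $\Ss\cap\Ff=\varnothing$, such that each component of $Y\setminus\nu(\Ss)$ contains at most one component of $L$. Attaching equivariant $3$--handles to $Y$ along $\Ss$ produces a $4$--manifold whose remaining boundary components are $3$--spheres carrying linear $G$--actions by the~\ref{LT}; capping each with an equivariant $4$--handle yields a $4$--dimensional $1$--handlebody $X$ that is linearly parted by the cores $\Bb$ of the $3$--handles. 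Each component $L_i$ of $L$ lies in a unique $4$--ball component $Z_i\subset X\setminus\nu(\Bb)$ with linear $\Stab_G(Z_i)$--action, so by Lemma~\ref{lem:balls_as_cones} I may take $D_i$ to be the linear cone on $L_i$ from the center of $Z_i$ and $\delta_i$ to be the linear cone on the Seifert disk $F_i\subset\partial Z_i$. Then $\partial\delta_i=F_i\cup D_i$, and $\Bb\cap\Delta=\varnothing$ by construction, so the resulting pair $(X,\Dd)$ is linearly parted as a pair.

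For part (2), I mirror the proof of Theorem~\ref{thm:laud_poen}(2). Let $(\Bb,\Delta)$ and $(\Bb',\Delta')$ be the linear parting ball-systems and almost-equivariant bridge-ball systems for $(X,\Dd)$ and $(X',\Dd')$, and set $\Ss=\partial\Bb$, $\Ff=\Delta\cap Y$, and similarly for $\Ss',\Ff'$. After the same doubling trick used in the proof of Theorem~\ref{thm:laud_poen}(2), I may assume $G$ acts orientation-preservingly on $\Bb$ and $\Bb'$. I then produce a common diagram $(\Ss'',\Ff'')$ in $Y$: first I apply Lemma~\ref{lem:innermost} to obtain an invariant sphere-system $\Ss''$ disjoint from $\Ss\cup\Ss'$, then I adapt the innermost-arc surgery from the proof of Proposition~\ref{prop:EST_pairs} (using the~\ref{ELT} in place of the Sphere Theorem) to obtain an almost-equivariant Seifert-disk system $\Ff''$ for $L$, disjoint from $\Ff,\Ff',\Ss,\Ss',\Ss''$, with each component of $Y\setminus\nu(\Ss'')$ containing at most one component of $L$.

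Next, I extend Lemma~\ref{lem:new_ball-system} to the pair setting to produce linear parting ball-systems $\Pp\subset X$ and $\Pp'\subset X'$ with boundary $\Ss''$, together with almost-equivariant bridge-ball systems for $\Dd$ and $\Dd'$ meeting $\partial X$ and $\partial X'$ in $\Ff''$, all of which are disjoint from the original data. The combinatorial structure of the decomposition $X=\bigcup Z_i$ along $\Pp$ then matches that of $X'=\bigcup Z'_i$ along $\Pp'$, with each pair of corresponding $4$--balls containing at most one disk of $\Dd$ (respectively $\Dd'$) and having matching boundary data in $Y$. Within each matched pair, I apply Corollary~\ref{cor:disk_fillings_diffeomorphic} when a disk is present, and Lemma~\ref{lem:unique_handle_attachment} when no disk is present, to obtain equivariant rel-boundary diffeomorphisms of pairs, which assemble into a $G$--equivariant diffeomorphism $(X,\Dd)\to(X',\Dd')$ that is the identity on $Y$.

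The main obstacle will be the careful adaptation of Lemmas~\ref{lem:innermost} and~\ref{lem:new_ball-system} to the pair setting, particularly controlling the almost-disjoint intersections of Seifert disks along their shared boundary components of $L$ while simultaneously pushing $\Ff''$ off of $\Ff$, $\Ff'$, $\Ss$, and $\Ss'$ via iterated equivariant surgery. Once this common diagram is in hand, the remainder is a piece-by-piece application of Corollary~\ref{cor:disk_fillings_diffeomorphic} and Lemma~\ref{lem:unique_handle_attachment}.
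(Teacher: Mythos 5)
Your proposal is correct and follows the same architecture as the paper's proof: part (1) is obtained from Proposition~\ref{prop:EST_pairs} by coning in the linear $4$--handles exactly as you describe, and part (2) reduces to a common pair $(\Ss'',\Ff'')$, new parting and bridge-ball systems via Lemma~\ref{lem:new_ball-system} and coning, and a piece-by-piece matching via Lemma~\ref{lem:unique_handle_attachment} and Corollary~\ref{cor:disk_fillings_diffeomorphic}. The one step where you genuinely diverge is the construction of $\Ff''$: the paper runs the surgery of Lemma~\ref{lem:innermost} on the combined systems, surgering $\Ss'\cup\Ff'$ against $\Ss\cup\Ff$ so that $\Ss''$ and $\Ff''$ are produced simultaneously (discarding the closed components that appear when a Seifert disk is surgered), whereas you build $\Ss''$ from the spheres alone and then construct $\Ff''$ afresh via the~\ref{ELT}. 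Your route can be made to work, but two points need care. First, the disks of $\Ff''$ all have boundary on $L$, so they can only be \emph{almost}-disjoint from $\Ff$ and $\Ff'$, never disjoint; in fact no control relative to $\Ff,\Ff'$ is needed at all---what the matching argument actually requires is that $\Ff''$ be disjoint from $\Ss\cup\Ss'\cup\Ss''$, so that the cones on $\Ff''$ taken inside the components of $X\setminus\nu(\Bb)$ and $X'\setminus\nu(\Bb')$ make sense and miss $\Pp$ and $\Pp'$. Second, you cannot apply the~\ref{ELT} directly in the complement of $L\cup\Ss\cup\Ss'\cup\Ss''$, since $\Ss$ and $\Ss'$ intersect each other; you should apply it in $Y\setminus\nu(L\cup\Ss'')$ (where the longitude is null-homotopic because free factors of $\pi_1$ inject) and then clear the resulting disks off $\Ss$ and $\Ss'$ by iterated innermost-disk surgery---which is precisely the step the paper packages into its combined surgery. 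With those adjustments the two arguments are essentially equivalent in content.
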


\begin{proof}
	Part (1) follows easily from Proposition~\ref{prop:EST_pairs} as follows.
	Attach equivariant 3--handles to $Y$ along the invariant sphere-system $\Ss$ provided by Proposition~\ref{prop:EST_pairs}, and cap off the resulting 3--manifold with equivariant 4--handles to obtain $X$, just as in the proof of Theorem~\ref{thm:laud_poen}(1).
	The $G$--invariant almost-disjoint Seifert disks $\Ff$ for $L$ lie in the boundary of the 4--handles.
	Since the action on $X$ is linearly parted by the cores of the 3--handles, the induced action on each 4--ball is simply the cone on the action on its boundary 3--sphere.
	There is at most one component of $L$ in each such 3--sphere.
	For each component $K$ of $L$, take the disk $D_K$ that $K$ bounds in its $4$--ball to be the cone on $K$, and note that the cone on the almost-disjoint collection of Seifert disks $\Ff_K$ for $K$ gives an invariant, almost-disjoint bridge-ball-system for $D_K$, showing that $D_K$ is boundary-parallel in its $4$--ball.
	Repeating this construction over all components of $L$ gives the required disk-tangle $\Dd$.

	For part (2), we closely follow the proof of Theorem~\ref{thm:laud_poen}(2), indicating how to adapt the key lemmata of Section~\ref{sec:ELP} to the present setting of pairs.
	In particular, we will carefully track the invariant, almost-disjoint Seifert disks, along with the invariant sphere-systems.
	Let $(X, \Dd)$ and $(X', \Dd')$ be two equivariant fillings of $(Y, L)$.
	Let $\Bb\cup\Delta$ and $\Bb'\cup\Delta'$ denote the linearly parting $3$--balls and invariant collections of bridge-balls for the pairs $(X, \Dd)$ and $(X', \Dd')$, respectively.
	Let $\Ss = \partial \Bb$ and $\Ss' = \partial \Bb'$ denote the associated invariant sphere-systems, and let $\Ff$ and $\Ff'$ denote the invariant, almost-disjoint Seifert disks corresponding to $\Delta$ and $\Delta'$.
	
	By the same technique as in the proof of Theorem~\ref{thm:laud_poen}(2), we may assume the $G$--action on each ball of $\Bb$, and hence each sphere of $\Ss$, is orientation-preserving.
	By working in the complement of the disks $\Dd$, we can assume the $G$--action on each bridge-ball of $\Delta$, and hence each Seifert disk of $\Ff$, is orientation preserving.
	Do the same for the corresponding objects in $X'$.
	In Lemma~\ref{lem:innermost}, we produced a sphere-system $\Ss''$ disjoint from $\Ss$ and $\Ss'$, on each component of which the $G$--action is orientation-preserving.
	This involved equivariantly isotoping $\Ss$ and $\Ss'$ to be transverse using Lemma~\ref{lem:3d_trans} and surgering $\Ss'$ against $\Ss$ and itself repeatedly.
	Here, we do the same, but instead we surger $\Ss'\cup \Ff'$ against $\Ss\cup\Ff$ and itself, yielding an equivariant sphere-system $\Ss''$ and an almost-equivariant Seifert disk-system $\Ff''$ for $L$ that are disjoint from one another and from $\Ss$ and $\Ss'$.
	
	There are a few differences between this setting and that of Lemma~\ref{lem:innermost}, which we now outline.
	First, working in a tubular neighborhood $\nu(L)$ of $L$, we can equivariantly isotope $\Ff'$ relative to $L$ to be almost-disjoint to $\Ff$ in $\nu(L)$.
	Next, working away from $\nu(L)$, we can equivariantly isotope $\Ss'\cup \Ff'$ to be transverse to $\Ss\cup\Ff$ so that the equivariant isotopies are isotopies relative to $\partial(\nu(L))$ for $\Ff$ and $\Ff'$.
	This assures that $(\Ss\cup\Ff)\cap(\Ss'\cup\Ff')\setminus L$ is a invariant collection of simple closed curves $\Cc$, as in Lemma~\ref{lem:innermost}.
	
	In this new setting, the innermost disk $D$ with $\partial D = \omega$ may lie in either $\Ss$ or $\Ff$ and $\omega$ may lie in either $\Ss'$ or $\Ff'$.
	If $\omega\subset\Ss'$, then the process is unchanged, and $\Ss''$ is a new sphere-system (that is disjoint from $\Ff''$) as in the proof of Lemma~\ref{lem:innermost}.
	If $\omega\subset\Ff'$, then the result $\Ff''$ of the surgery contains closed components, which we discard.
	
	By construction, $\Ss''$ is disjoint from $\Ff''$, hence is disjoint from $L$.
	We claim that each component of $Y\setminus\nu(\Ss'')$ contains at most one component of $L$.
	This follows inductively: $\Ss''$ begins by isolating components since it is a copy of $\Ss'$, which isolates components of $L$.
	The sphere-system $\Ss''$ is unchanged when $\omega\subset\Ff''$, and the sphere-system $\Ss''$ changes in exactly the same way as in Lemma~\ref{lem:innermost} when $\omega\subset\Ss''$: either the complement of $\Ss''$ is split along a neatly-embedded disk, or a 3--dimensional 2--handle is attached the boundary of the complement, as in Figure~\ref{fig:surgery_lemma}(\textsc{a}).
	Each option preserves the fact that a sphere-system isolates components of $L$; hence $\Ss''$ has this property.

	By Lemma~\ref{lem:new_ball-system}, $\Ss''$ bounds ball-systems $\Pp$ and $\Pp'$ in $X$ and $X'$, respectively, that are disjoint from $\Bb$ and $\Bb'$.
	In the present setting, we must also produce invariant, almost-disjoint bridge-balls $\Delta$ and $\Delta'$ for $\Dd$ and $\Dd'$, respectively, that are disjoint from $\Pp$ and $\Pp'$.
	Following the proof of Lemma~\ref{lem:new_ball-system}, if $Z$ contains a disk $D$ with $\partial D = U$, then we note that, by Lemma~\ref{lem:disks_in_ball_diffeomorphic}, the pair $(Z, \Dd)$ is diffeomorphic rel-boundary to the cone on $(\partial Z, U)$.
	Take $\Delta$ to be the cone on $\Ff''$ with respect to this cone structure.
	At each level, since $\Ss''_Z$ is disjoint from $\Ff''_Z$, and the union of all disks in $\Ff''_Z$ is connected, we can choose the innermost sphere $S_I$ that we cap off to be a sphere that bounds a ball $B_I$ that is disjoint from the other spheres and from $\Ff''_Z$.
	The entirety of tubing and capping process of Lemma~\ref{lem:new_ball-system} can now be applied, with this alteration, and the ball-system produced will be disjoint from $\Delta$ since $\Delta$ is just the cone on $\Ff''_Z$.
	This gives ball systems for $X$ and $X'$, $\Pp$ and $\Pp'$ respectively, and bridge-ball systems, $\Delta$ and $\Delta'$ repectively, which are disjoint from their respective ball systems.
	
	Finally, we claim that each ball of $X\setminus \Pp$ and $X\setminus \Pp'$ contains at most one component of $\Dd$ and $\Dd'$, respectively.
	This follows since $\partial \Pp = \partial \Pp' = \Ss''$ isolates components of $\partial \Dd = \partial \Dd' = L$.

	We have that $\Pp\cup\Delta$ and $\Pp'\cup\Delta'$ are linear parting pairs of ball-systems and bridge-balls for $(X, \Dd)$ and $(X', \Dd')$, respectively, such that $\partial\Pp = \partial\Pp'$.
	As in the proof of Theorem~\ref{thm:laud_poen}, it follows that $X$ and $X'$ have isomorphic handle-decompositions, hence are equivariantly diffeomorphic by Lemmata~\ref{lem:unique_handle_attachment} and Corollary~\ref{cor:disk_fillings_diffeomorphic}, with the former applying to the 3--handles, and the latter applying to the 4--handles, which may contain disks.
\end{proof}

\section{Open Questions}
\label{sec:open_questions}

We conclude with a discussion of several fascinating open questions that serve to motivate the content of the present paper, starting with questions about \emph{linearization} of cyclic group actions.
In dimension three, the~\ref{LT} implies that every cyclic group action on $S^3$ is smoothly equivalent to a linear action.
In dimension four, the resolution in the negative of the Smith Conjecture~\cite{Gif_66_The-generalized-Smith-conjecture,Gor_74_On-the-higher-dimensional-Smith} shows that there are cyclic group actions on $S^4$ whose fixed-point set is a non-trivial $2$--knot; hence, these actions cannot be linear.
The knot type of the fixed-point set is, to our knowledge, the only known invariant of two-sphere-fixing cyclic actions on $S^4$, naturally leading to the following conjecture.

\begin{conjecture}[Unknot Linearization]
\label{conj:unknot_linear}
	If $\mathbb Z_n$ acts smoothly on $S^4$ and the fixed-point set is an \emph{unknotted} two-sphere, then the action is smoothly equivalent to a linear action.
\end{conjecture}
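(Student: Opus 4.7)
The plan is to use Theorem~\ref{thm:laud_poen} to reduce Conjecture~\ref{conj:unknot_linear} to a single linear-parting question in dimension four. Let $\rho$ be the hypothetical $\mathbb{Z}_n$-action on $S^4$ with fixed set the unknotted two-sphere $F$, and let $\rho_{\mathrm{lin}}$ denote the standard linear $\mathbb{Z}_n$-action on $S^4\subset\mathbb{R}^3\oplus\mathbb{R}^2$ (trivial on $\mathbb{R}^3$, rotation by $2\pi/n$ on $\mathbb{R}^2$). The equivariant tubular neighborhood theorem applied to the fixed set forces the action on $\nu(F)$ to be determined by the rotation representation on the normal fibers, so there is a $\mathbb{Z}_n$-equivariant diffeomorphism $f_\nu\colon(\nu(F),\rho)\to(\nu(F),\rho_{\mathrm{lin}})$. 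Because $F$ is unknotted, $X:=S^4\setminus\mathrm{int}(\nu(F))$ is diffeomorphic to $S^1\times B^3$, and under a suitable identification the induced action on $\partial X\cong S^1\times S^2$ is rotation on the $S^1$-factor with trivial action on $S^2$, exactly matching the restriction of $\rho_{\mathrm{lin}}$.

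Combining these observations, the conjecture reduces to showing that the boundary identification $f_\nu|_{\partial\nu(F)}$ extends across $X$ to a $\mathbb{Z}_n$-equivariant diffeomorphism $X\to X_{\mathrm{lin}}$, where $X_{\mathrm{lin}}\cong S^1\times B^3$ carries the linear rotation action. By Proposition~\ref{prop:equivalent_statements}, this is equivalent to the two actions being equivariantly diffeomorphic rel boundary. The action on $X_{\mathrm{lin}}$ is manifestly linearly parted (a fundamental-domain arc $A\subset S^1$ yields a linear $0$-handle $A\times B^3$ whose two end-disks $\{\mathrm{pt}\}\times B^3$ serve as parting balls), so Theorem~\ref{thm:laud_poen}(2) supplies the required extension \emph{as soon as} the action on $X$ itself is linearly parted. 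Conjecture~\ref{conj:unknot_linear} therefore reduces to:
\[
(\ast)\quad \text{every free }\mathbb{Z}_n\text{-action on }S^1\times B^3\text{ extending the standard boundary action is linearly parted.}
\]

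I expect $(\ast)$ to be the main obstacle. The Equivariant Sphere Theorem supplies an invariant sphere-system $\mathcal{S}\subset\partial X$ (for example, the $\mathbb{Z}_n$-orbit of a single fiber $\{\mathrm{pt}\}\times S^2$), and $(\ast)$ amounts to filling $\mathcal{S}$ equivariantly with a disjoint collection of neatly embedded $3$-balls in $X$ whose complement is a union of linear $4$-balls. In dimension three one would produce these balls by applying the Equivariant Loop Theorem to spanning disks, but no four-dimensional analog is presently available. A natural alternative is to pass to the free quotient $Q:=X/\mathbb{Z}_n$, a compact smooth four-manifold with $\partial Q\cong S^1\times S^2$ and $\pi_1(Q)\cong\mathbb{Z}$: the question $(\ast)$ is in fact equivalent to asking whether $Q$ is diffeomorphic to $S^1\times B^3$, because a standard handle structure on $Q$ would lift equivariantly to a linear parting of $X$, and conversely the quotient of a linear parting is a $1$-handlebody structure on $Q$ with $\pi_1\cong\mathbb{Z}$, which characterizes $S^1\times B^3$ among four-dimensional $1$-handlebodies. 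Recognizing such a $Q$ as standard is a smooth four-dimensional classification problem in the same spirit as the smooth four-dimensional Poincar\'e conjecture and appears to demand genuinely new four-dimensional input beyond the machinery developed here.
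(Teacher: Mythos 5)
The statement you set out to prove is labeled a \emph{conjecture} in the paper, and the paper offers no proof of it; it appears in Section~\ref{sec:open_questions} precisely as an open problem. Your argument is accordingly not a proof but a reduction, and you are candid about this: everything hinges on your statement $(\ast)$, that every free $\mathbb{Z}_n$-action on $S^1\times B^3$ extending the standard boundary rotation is linearly parted, and $(\ast)$ is exactly as open as the conjecture itself. That is the genuine gap. The reduction you give is essentially the one the authors themselves sketch around Proposition~\ref{prop:counterexample} and Question~\ref{ques:s1xb3}: they observe that a free action on $S^1\times B^3$ admitting no equivariant non-separating $B^3$ could be completed (by a cancelling $2$--handle) to a non-linear action on $S^4$ fixing an unknotted two-sphere, and that the quotient would then be an exotic $S^1\times B^3$, i.e.\ a smooth $4$--manifold with infinite cyclic fundamental group and boundary $S^1\times S^2$ containing no neatly embedded non-separating $B^3$. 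Your equivalence of $(\ast)$ with the smooth standardness of $Q=X/\mathbb{Z}_n$ is that same observation run in both directions, and recognizing $Q$ smoothly is, as you say, a problem on the order of the smooth $4$--dimensional Poincar\'e conjecture; Freedman's work settles only the topological version, which is why the paper notes the conjecture is true topologically.

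Two smaller points. First, for composite $n$ the hypothesis that the fixed-point set of the full group is an unknotted $S^2$ does not by itself force the action on the complement to be free (a proper subgroup could a priori have a larger fixed set), so $(\ast)$ as you state it covers only the semifree case; for prime $n$ this is automatic. Second, the reduction step itself is sound and uses the paper's machinery correctly: the equivariant tubular neighborhood theorem standardizes the action near $F$, the complement of the unknot is $S^1\times B^3$, and once the action on $X$ is known to be linearly parted, Theorem~\ref{thm:laud_poen}(2) together with Proposition~\ref{prop:equivalent_statements} produces the rel-boundary equivariant identification with the linear model. So your write-up correctly isolates where the difficulty lives, but it does not resolve the conjecture, and neither does the paper.
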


This conjecture has deep connections with fundamental questions about smooth structures on $4$--manifolds:
Given actions $\rho$ and $\rho'$ of $\mathbb Z_n$ on $S^4$ inducing branched covers with fixed-point sets $S$ and $S'$, the actions are smoothly equivalent if and only if the quotients $(X/\rho, S/\rho)$ and $(X/\rho', S'/\rho)$ are diffeomorphic as pairs.
Letting $\ell$ be a linear action on $S^4$ fixing an unknotted two-sphere $S$, and letting $\rho$ be some candidate action for Conjecture~\ref{conj:unknot_linear} also fixing $S$, then $\rho$ is linear if and only if $(S^4/\ell, S/\ell)$ is diffeomorphic to $(S^4/\rho, S/\rho)$.
Clearly $(S^4/\ell, S/\ell)$ is diffeomorphic to $(S^4, S)$.
Thus, $\rho$ can fail to be smoothly equivalent to a linear action either by having $S^4/\rho$ fail to be $S^4$, or having $S/\rho$ fail to be diffeomorphic to $S$.

\begin{proposition}
\label{prop:counterexample}
	Suppose that $\rho$ is a non-linear action of $\Z_n$ on $S^4$ that fixes the unknotted two-sphere $S$.
	Then, either $S^4/\rho$ is an exotic 4--sphere, or $S/\rho$ is a non-trivial 2--knot with group $\Z$.
\end{proposition}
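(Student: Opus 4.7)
The plan is to analyze the quotient pair $(S^4/\rho, S/\rho)$ using the theory of cyclic branched covers together with Freedman's classification, and then invoke the correspondence recalled just before the proposition: $\rho$ is smoothly equivalent to the linear action $\ell$ (also fixing $S$) if and only if $(S^4/\rho, S/\rho)$ is diffeomorphic as a pair to $(S^4/\ell, S/\ell)\cong(S^4,S)$. Since $\rho$ is non-linear by hypothesis, these pairs are \emph{not} diffeomorphic, and the goal is to show this forces either $S^4/\rho$ to be an exotic $4$--sphere or $S/\rho$ to be a non-trivial $2$--knot whose group is $\Z$.

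First, I would identify the homeomorphism type of $S^4/\rho$. The quotient map $p\colon S^4\to S^4/\rho$ is an $n$--fold cyclic branched cover with branch locus $S/\rho$ and uniform ramification index $n$, since $\Z_n$ acts by rotation of order $n$ on the normal $2$--disks to $S$; in particular $S^4/\rho$ is a closed topological $4$--manifold. The Riemann--Hurwitz formula $\chi(S^4)=n\chi(S^4/\rho)-(n-1)\chi(S/\rho)$ together with $\chi(S^4)=\chi(S/\rho)=2$ gives $\chi(S^4/\rho)=2$. For the fundamental group, the unknottedness of $S$ gives $\pi_1(S^4\setminus S)\cong\Z$ generated by a meridian of $S$; the branched-cover structure, together with a local model in a normal disk, yields $\pi_1((S^4/\rho)\setminus(S/\rho))\cong\Z$, generated by a meridian $m$ of $S/\rho$ whose $n$th power lifts to the meridian of $S$. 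Since $m$ bounds a normal disk in $S^4/\rho$, we conclude $\pi_1(S^4/\rho)=1$. Freedman's classification of closed simply connected topological $4$--manifolds then implies that $S^4/\rho$ is homeomorphic to $S^4$.

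The proof concludes with a case split. If $S^4/\rho$ is not diffeomorphic to the standard $S^4$, then $S^4/\rho$ is an exotic $4$--sphere and we are done. Otherwise, fix a diffeomorphism $S^4/\rho\cong S^4$ and regard $S/\rho$ as a smoothly embedded $2$--sphere in the standard $S^4$, i.e.\ a $2$--knot; by the fundamental group computation above, its knot group is $\Z$. If $S/\rho$ were the unknotted $2$--sphere, the pair $(S^4/\rho, S/\rho)$ would be diffeomorphic to $(S^4,S)\cong(S^4/\ell,S/\ell)$, and the correspondence recalled above would force $\rho$ to be smoothly equivalent to the linear action $\ell$, contradicting the hypothesis. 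Therefore $S/\rho$ is a non-trivial $2$--knot whose group is $\Z$.

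The main obstacle is the careful verification that $S^4/\rho$ is a topological $4$--manifold that is simply connected and has $\chi=2$, so that Freedman applies and identifies it topologically with $S^4$; once that is in hand, the case split and contradiction against linearity are essentially formal consequences of the quotient-pair correspondence.
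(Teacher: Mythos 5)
Your proof is correct and follows essentially the same route as the paper: both arguments come down to showing $\pi_1\left((S^4/\rho)\setminus(S/\rho)\right)\cong\Z$ (using unknottedness of $S$ together with the local rotation model, so that the meridian of $S/\rho$ generates) and then that $S^4/\rho$ is a simply connected topological $4$--sphere, after which the case split against linearity is formal. The only cosmetic difference is that you identify the quotient via Riemann--Hurwitz plus Freedman's classification, whereas the paper regards attaching $\nu(S)/\rho$ to the exterior as a $2$--handle and a $4$--handle attachment to conclude it is a homotopy $S^4$.
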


\begin{proof}
Relatively simple algebraic topology shows that $\pi_1(S^4/\rho\setminus S/\rho) = \mathbb Z$ and that $S^4/\rho$ is a homotopy $S^4$: we know that the $\Z_n$--action on $\partial(S^4\setminus \nu(S)) = S^1\times S^2$ is rotation in the first coordinate, which determines the map on $\pi_1$ from $S^1\times S^2$ to $(S^1\times S^2)/\rho$ defining the covering. 
This, combined with the fact that the inclusion of $\partial(S^4\setminus \nu(S))$ into $S^4\setminus \nu(S)$ is a $\pi_1$--isomorphism, determines the covering map onto $(S^4\setminus \nu(S))/\rho$ defining the covering and hence determines that $\pi_1(S^4/\rho\setminus S/\rho) = \mathbb Z$. 
Attaching $\nu(S)/\rho$ now amounts to a $2$--handle attachment along $S^1\times\{pt\}$ and a $4$--handle attachment, giving that $S^4/\rho$ is a homotopy $S^4$.
\end{proof}

For a discussion relating to the question of whether $S/\rho$ can be a non-trivial 2--knot, see Daniel Hartman's MathOverflow post~\cite{htt__A-conjecture-about-homotopy}.
By work of Freedman~\cite{Fre_82_The-topology-of-four-dimensional, FreQui_90_Topology-of-4-manifolds, Beh_21_The-Disc-Embedding-Theorem}, this conjecture is true in the topological category, so a counterexample would be nonlinear and smoothly exotic.
A related question, which arose through conversation with Daniel Hartman related to his MathOverflow post, is the following.

\begin{question}
\label{ques:s1xb3}
	Does every cyclic action on $S^1\times B^3$ admit an equivariant, neatly embedded, non-separating $B^3$?
\end{question}

This is subtly different than asking if all actions on $S^1\times B^3$ are parted (see Section~\ref{sec:equivariant_handlebodies}), since the orbit of these balls do not need to have $B^4$ complements.
The orbit of such a $B^3$ gives an invariant ``handle-decomposition'' of $S^1\times B^3$ into $1$--handles and (possibly exotic) homotopy $4$--ball ``$0$--handles".
(Again, the linearity hypothesis is missing.)
The connection with the above is that an action with no parting $B^3$ that is \emph{free} could be completed by attaching a cancelling $2$--handle into an action on $S^4$ fixing an unknotted two-sphere, but which is not smoothly equivalent to the linear action fixing this unknot.
A counterexample arising in this way would be such that the quotient knot $S/\rho$ is not unknotted, since a $3$--ball realizing the unknotting of $S/\rho$ could be lifted to an equivariant parting $B^3$.
Hence, the quotient of such an action on the uncompleted $S^1\times B^3$ would be a manifold with infinite cyclic fundamental group and boundary $S^1\times S^2$, which does not admit any neatly embedded non-seperating $B^3$ (i.e. an exotic $S^1\times B^3$).
This question can be viewed as a weak form of a generalization to four dimensions of the equivariant disk theorem in dimension three. 

This question is also interesting for non-free cyclic actions: A result of Gabbard shows that there exists a $\Z_2$--action on $S^1\times B^3$ restricting on the boundary $S^1\times S^2$ to factor-wise reflection in each component that does not admit an \emph{invariant} parting $3$--ball.
See~\cite[Theorem 5.8]{Gab_24_Equivariant-double-slice-genus}; the $S^1\times B^3$ in question is the exterior of Gabbard's 2--knot $(J,\tau)$.

There is no clear equivariant $3$--ball parting this action and no clear way to obstruct such a $3$--ball.
Nevertheless, if such an equivariant $3$--ball existed, its orbit gives a $3$--sphere with extremely strange properties in the vein of Proposition~\ref{prop:counterexample}, and so, with Gabbard, we conjecture the following.

\begin{conjecture}
\label{conj:Gabbard}
	The strongly invertible $2$--knot $(J,\tau)$ of~\cite[Theorem 5.8]{Gab_24_Equivariant-double-slice-genus} does not bound an equivariant $3$--ball.
\end{conjecture}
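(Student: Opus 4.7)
The plan is to argue by contradiction, extending Gabbard's obstruction from invariant parting $3$--balls to all $\tau$--equivariant ones. Suppose $J\subset S^4$ bounds an equivariant $3$--ball $B$. Choosing a $\tau$--invariant tubular neighborhood $\nu(J)$ and setting $B' = B\cap(S^4\setminus\nu(J))$ produces a neatly embedded parting $3$--ball in the exterior $E = S^4\setminus\nu(J)\cong S^1\times B^3$ whose boundary is a longitude two-sphere of $J$, and the construction is $\tau$--equivariant. If $B$ is $\tau$--invariant, then $B'$ is an invariant parting $3$--ball in $E$, directly contradicting the cited theorem of Gabbard. Hence we may assume $\tau(B)\neq B$; equivariance together with $\partial B = J = \tau(\partial B)$ forces $B\cap\tau(B) = J$, so that $B'$ and $\tau(B')$ are \emph{disjoint} parting $3$--balls in $E$, and $\Sigma := B\cup_J\tau(B)$ is an embedded $3$--sphere in $S^4$ whose self-map $\tau|_\Sigma$ exchanges the two hemispheres $B,\tau(B)$ while fixing the equatorial two-sphere $J$.

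Since $\tau|_\Sigma$ is an involution of $S^3$ with two-sphere fixed set, the~\ref{LT} conjugates it smoothly to the standard reflection, so $J$ sits in $\Sigma$ as an unknotted two-sphere and the quotient $\Sigma/\tau$ is a $3$--ball. Next I would analyze how $\Sigma$ sits inside $S^4$: by the topological Schoenflies theorem, $\Sigma$ separates $S^4$ into two topological $4$--balls $W_{\pm}$, and the involution $\tau$ either preserves each $W_{\pm}$ setwise or swaps them. In the swap case, $S^4/\tau$ is built by capping off $W_+$ along $\Sigma$ with $\Sigma/\tau\cong B^3$; one would aim to derive a contradiction from the Smith-theoretic constraints on $S^4/\tau$ branched along $J/\tau$, given the prescribed reflection action of $\tau$ on $\partial E = S^1\times S^2$ used in Gabbard's construction. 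In the preserved case, each $W_{\pm}$ inherits a $\tau$--action extending the linear reflection on $\Sigma$, and combining these half-actions with Theorem~\ref{thm:laud_poen} applied to the equivariant fillings produced by trimming off $\nu(J)\cap W_\pm$ should let us reassemble an honest $\tau$--invariant parting $3$--ball inside $E$, once again contradicting Gabbard.

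The main obstacle is controlling the smooth structure of the pieces $W_{\pm}$: the smooth four-dimensional Schoenflies conjecture remains open, so each $W_{\pm}$ is only a topological $4$--ball and may carry an exotic smooth structure. As Proposition~\ref{prop:counterexample} indicates for the related Conjecture~\ref{conj:unknot_linear}, any equivariant $3$--ball yielding a counterexample here is expected to produce either an exotic smooth $S^4$ or a similarly subtle smooth phenomenon rather than an outright logical impossibility, and turning this qualitative expectation into a rigorous obstruction — perhaps via a smooth-category invariant refining Gabbard's original argument so as to detect pairs of equivariantly linked $3$--balls, and not merely invariant ones — is presumably why the statement must currently be left as a conjecture.
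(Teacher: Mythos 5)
This statement is a conjecture, and the paper offers no proof of it: the surrounding discussion explicitly says there is ``no clear way to obstruct such a $3$--ball'' and records only the heuristic that an equivariant $3$--ball would, via its orbit $B\cup_J\tau(B)$, produce an embedded $3$--sphere with pathological properties in the vein of Proposition~\ref{prop:counterexample}. Your proposal reproduces exactly this heuristic --- Gabbard's theorem kills the invariant case, the remaining almost-equivariant case yields $\Sigma=B\cup_J\tau(B)$, and the analysis of the complementary regions of $\Sigma$ founders on the smooth Schoenflies problem and possible exotica --- and you are right to conclude that the argument cannot currently be closed. So there is no completed proof on either side to compare; your sketch is an honest expansion of the paper's one-sentence motivation, not a proof, and it should not be presented as one.

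Two specific points in your sketch would need repair even as a heuristic. First, $\tau|_\Sigma$ does not fix the equatorial two-sphere $J$ pointwise: $(J,\tau)$ is a \emph{strongly invertible} $2$--knot, so $\tau$ preserves $J$ setwise while acting on it as a nontrivial involution, and the fixed set of $\tau|_\Sigma$ is only the fixed set of $\tau|_J$ (generically a circle), not a two-sphere. The \ref{LT} still linearizes $\tau|_\Sigma$, but your subsequent description of the quotient $\Sigma/\tau$ and of how $J$ sits in $\Sigma$ must be redone for the correct linear model. Second, in the ``preserved'' case your appeal to Theorem~\ref{thm:laud_poen} to ``reassemble an honest $\tau$--invariant parting $3$--ball'' is unjustified: that theorem requires the pieces to carry \emph{linearly parted} actions on genuine $1$--handlebodies, whereas the $W_\pm$ are a priori only topological $4$--balls with unknown smooth structure and unknown actions, which is precisely the phenomenon Proposition~\ref{prop:counterexample} warns cannot be ruled out by the tools of this paper.
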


Rather than fixing a two-sphere, a cyclic action on $S^4$ can fix a pair of points.

\begin{conjecture}[Point-Fixing Linearization]
\label{conj:pf_linear}
	If $\mathbb Z_n$ acts on $S^4$ smoothly with exactly two fixed points, then the action is smoothly equivalent to a linear action.
\end{conjecture}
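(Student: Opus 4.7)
The plan is to present $S^4$ as the union of two linear $4$-balls glued along their common linear $3$-sphere, from which linearity of the $\Z_n$-action follows because the equivariant suspension of a linear action on $S^3$ is again a linear action on $S^4$. Apply the slice theorem at the two fixed points $p, q$ to obtain $\Z_n$-invariant $4$-ball neighborhoods $B_p, B_q$ on which the action is linear. The complement $W = S^4 \setminus \mathrm{int}(B_p \sqcup B_q)$ is diffeomorphic to $S^3 \times I$ and carries a \emph{free} $\Z_n$-action, since $p$ and $q$ exhaust the fixed set. By the~\ref{LT}, the two boundary actions on $\partial B_p$ and $\partial B_q$ are linear actions on $S^3$; both are isomorphic to the linear model determined by the isotropy representation of $\Z_n$ at either fixed point. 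Showing that the action on $W$ is equivariantly diffeomorphic rel boundary to the product of this boundary action with the trivial action on $I$ would let us absorb a collar of $\partial B_p$ into $B_p$ without changing its equivariant diffeomorphism type, presenting $S^4$ as the desired suspension.

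\textbf{Strategy.} Everything thus reduces to showing that any free $\Z_n$-action on $S^3 \times I$ whose restriction to each boundary component is a prescribed linear action must be the trivial extension across $I$. Equivalently, passing to quotients, the compact $4$-manifold $\bar W := W/\Z_n$ is a smooth h-cobordism from the lens space $L := S^3/\Z_n$ to itself (the slice-theorem neighborhoods furnish smooth product collars on both boundary components), and we wish to show $\bar W \cong L \times I$ rel boundary. A natural attack, parallel to the proof of Theorem~\ref{thm:laud_poen}, is to select a $\Z_n$-invariant Morse function on $S^4$ whose unique index-$0$ and index-$4$ critical points are $p$ and $q$; all other critical points then appear in free orbits of size $n$, and one would cancel them in equivariant pairs. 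The outer-dimensional cancellations ($0/1$ and $3/4$) would follow from Theorem~\ref{thm:laud_poen} and its dual, since the union of $0$- and $1$-handles (respectively $3$- and $4$-handles) constitutes an equivariant filling of its boundary.

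\textbf{Main obstacle.} The crux is equivariant middle-dimensional ($1/2$ and $2/3$) cancellation---in effect, a smooth equivariant $4$-dimensional s-cobordism theorem for lens spaces. Since the Whitehead group $\mathrm{Wh}(\Z_n)$ is nontrivial for $n \ge 5$, the desired conclusion is not a formal consequence of algebraic $K$-theory: one must first argue that the specific h-cobordism $\bar W$ has vanishing Whitehead torsion (which is plausible given its rigid construction from a sphere action), and then execute equivariant Whitney moves to perform the geometric cancellation, which is notoriously delicate in dimension four. An alternative route would adapt the equivariant Ricci flow techniques of Dinkelbach-Leeb~\cite{DinLee_09_Equivariant_Ricci_flow}, used in their proof of the $3$-dimensional linearization results, by flowing a $\Z_n$-invariant metric on $S^4$ toward a round metric equivariantly; however, the complexity of Ricci flow in dimension four, including potential singularity formation, makes this a substantial undertaking. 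In either approach, the crux lies genuinely in smooth $4$-manifold topology and seems to require techniques beyond those developed in the present paper.
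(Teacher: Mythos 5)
You have not produced a proof, and no proof exists to compare against: the statement is stated in the paper as an open \emph{conjecture}, with no argument offered beyond the remark that it ``admits a nice reformulation in terms of showing the triviality of a related $s$--cobordism of lens spaces $S^3/\Z_n$.'' Your proposal is essentially a careful writeup of exactly that reformulation: linearize at the two fixed points via the slice theorem, observe that the complement $W\cong S^3\times I$ is a cobordism between linear actions on $S^3$ (using the~\ref{LT}), and reduce to showing the quotient $h$--cobordism of lens spaces is a product rel boundary. You are right, and commendably honest, that the crux is a smooth four-dimensional $s$--cobordism statement, which is a well-known open problem (topologically it is accessible for these fundamental groups by Freedman's work, but smoothly nothing of the sort is known, and the paper's Theorem~\ref{thm:laud_poen} only handles the outer handle cancellations, as you note). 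So the gap is not a subtle oversight: it is the entire content of the conjecture, and your proposal correctly locates it but does not close it.

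One secondary error worth flagging in the reduction itself: the claim that the $\Z_n$--action on $W$ is free ``since $p$ and $q$ exhaust the fixed set'' conflates $\Fix(\Z_n)$ with the singular set $\bigcup_{g\neq 1}\Fix(g)$. For composite $n$, a proper subgroup may fix a larger set (already for linear models $V(a)\oplus V(b)$ with $\gcd(a,n)>1$ the corresponding subgroup fixes a $2$--sphere), so the quotient $\bar W$ need not be a manifold and the $h$--cobordism framing must be replaced by an equivariant or orbifold version. The reduction as written is clean only for $n$ prime, or under an additional semifreeness hypothesis. Also, the assertion that the two isotropy representations at $p$ and $q$ agree requires an argument (e.g.\ via Reidemeister torsion or $G$--signature considerations for the cobounding lens spaces); it is true but not automatic.
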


This conjecture has been studied extensively, see~\cite[Conjecture~2.2]{Che_10_Group-actions-on-4-manifolds:} and admits a nice reformulation in terms of showing the triviality of a related $s$--cobordism of lens spaces $S^3/\mathbb Z_n$.

Both this conjecture and Conjecture~\ref{conj:unknot_linear} are equivalent to corresponding conjectures for cyclic actions on $B^4$ via the removal of a tubular neighborhood of a fixed point.
Progress on these conjectures would advance our understanding of group actions on $4$--manifolds up to smooth equivalence by removing the possibility that one could change an action on $X$ without changing the smooth topology of $X$ or the fixed-point set by connect-summing with a non-trivial action on $S^4$.

These conjectures are also the first stone on the path of the following ambitious question.

\begin{question}
\label{ques:action_invariant}
	Is the smooth equivalence class of a $\Z_p$ action $\rho$ on $S^4$ determined by the knot type of $\text{Fix}(\rho)$?
	In other words, is the map $\rho\mapsto\Fix(\rho)$ an injection from the set of actions up to smooth equivalence to the set of even-dimensional knots ($S^2$--knots or $S^0$--knots) in $S^4$ up to diffeomorphism of pairs?
\end{question}

\bibliographystyle{amsalpha}
\bibliography{equiv_LP.bib}

\end{document}